\documentclass[a4paper,11pt]{article}
\usepackage{amsthm, amsmath, amssymb, dsfont, mathrsfs, color, upgreek, accents, graphicx}
\usepackage{float}
\usepackage{multirow}
\usepackage{hhline}
\usepackage[table,xcdraw]{xcolor}
\usepackage[utf8]{inputenc} 
\usepackage[T1]{fontenc}

\newtheorem{theorem}{Theorem}
\newtheorem{lemma}[theorem]{Lemma}
\newtheorem{proposition}[theorem]{Proposition}

\newtheorem{remark}[theorem]{Remark}
\newtheorem{claim}[theorem]{Claim}

\topmargin -0.5in
\textheight 9.5in
\oddsidemargin 0.15in
\evensidemargin 0.25in
\textwidth 6.15in

\newcommand{\dbh}{\hat{\Upsilon}}

\setlength \parindent{0pt}
\setlength \parskip{.1cm}

\begin{document}

\title{The contact process over a dynamical d-regular graph}
\author{Gabriel Leite Baptista da Silva\textsuperscript{1}, Roberto Imbuzeiro Oliveira\textsuperscript{2}, Daniel Valesin\textsuperscript{3}}
\footnotetext[1]{Bernoulli Institute, University of Groningen. g.da.silva@rug.nl}
\footnotetext[2]{Instituto Nacional de Matem\'atica Pura e Aplicada. rimfo@impa.br}
\footnotetext[3]{Bernoulli Institute, University of Groningen. d.rodrigues.valesin@rug.nl}

\maketitle

\begin{abstract} We consider the contact process on a dynamic graph defined as a random~$d$-regular graph with a stationary edge-switching dynamics. In this graph dynamics, independently of the contact process state, each pair~$\{e_1,e_2\}$ of edges of the graph is replaced by new edges~$\{e_1',e_2'\}$ in a crossing fashion: each of~$e_1',e_2'$ contains one vertex of~$e_1$ and one vertex of~$e_2$. As the number of vertices of the graph is taken to infinity, we scale the rate of switching in a way that any fixed edge is involved in a switching with a rate that approaches a limiting value~$\mathsf{v}$, so that locally the switching is seen in the same time scale as that of the contact process. We prove that if the infection rate of the contact process is above a threshold value~$\bar{\lambda}$ (depending on~$d$ and~$\mathsf{v}$), then the infection survives for a time that grows exponentially with the size of the graph. By proving that~$\bar{\lambda}$ is strictly smaller than the lower critical infection rate of the contact process on the infinite~$d$-regular tree, we show that there are values of~$\lambda$ for which the infection dies out in logarithmic time in the static graph but survives exponentially long in the dynamic graph.
\end{abstract} 

\textsc{Keywords:} contact process, random graphs, dynamic graphs\\
\textsc{AMS MSC 2010: 05C80, 60J85, 60K35, 82C22} 

\section{Introduction} \label{s:intro}

\subsection{The contact process on finite graphs} The contact process is a class of spin systems that is usually taken as a simple model for the spread of an infection in a population. Vertices of a graph~$G = (V,E)$ can be infected (state~1) or healthy (state~0). The dynamics is given by the prescription that independently, infected vertices recover with rate~1, and healthy vertices become infected with rate~$\lambda$ times the number of infected neighbors, where~$\lambda > 0$ is the model parameter, called the infection rate.

The configuration in which all vertices are healthy is an absorbing state, and on finite graphs it is almost surely reached. A quantity of interest in this case is the hitting time of this configuration, for the process started from all vertices infected at time zero; this hitting time is called the \textit{extinction time} (of the infection), and is denoted by~$\tau_G$. Typically, one fixes the infection rate~$\lambda$, takes a sequence of growing graphs~$(G_n)_{n \ge 1}$ from some common model of interest, and studies the asymptotic behavior of~$\tau_{G_n}$ as~$n \to \infty$.  It turns out that, in several cases, this behavior changes drastically according to a threshold value of~$\lambda$; this change is referred to as a finite-volume phase transition, and can often be associated to a phase transition  of the contact process on a related infinite graph. For instance, let $\lambda_c(\mathbb{Z}^d)$ be the critical value of the contact process on~$\mathbb{Z}^d$, that is, the supremum of the parameter values for which the infection dies out almost surely in the  process on~$\mathbb{Z}^d$ started from a single infection. If~$G_n$ is a box of~$\mathbb{Z}^d$ with side length~$n$, then it is known that~$\tau_{G_n}$ grows logarithmically with~$n$ if~$\lambda < \lambda_c(\mathbb{Z}^d)$, and exponentially with~$n$ if~$\lambda > \lambda_c(\mathbb{Z}^d)$.  See~\cite{cgov,sc85,dl88,ds88,tom93,tom99}.

\subsection{The contact process on the random~$d$-regular graph} An instance of the finite-volume phase transition of the contact process that is of particular interest to us happens in the random~$d$-regular graph. Let us explain how this graph is constructed. Fix~$d \in \mathbb{N}$,~$d \ge 3$, and~$n \in \mathbb{N}$; assume that~$dn$ is even. Let~$V := \{1,\ldots, n\}$ be the set of vertices of the graph, and~$H:=V \times \{1,\ldots, d\}$ be the set of half-edges (we generally omit the dependence on~$d$ and~$n$). Sample uniformly at random a perfect matching~$\varphi:H \to H$ of the set of half-edges (that is~$\varphi: H \to H$ is a bijection satisfying~$\varphi^{-1} = \varphi$ and~$\varphi((x,a)) \neq (x,a)$ for all~$(x,a) \in H$), and regard all sets of the form~$\{(x,a),(x',a')\}$ with~$(x',h') = \varphi((x,h))$ as an edge of the graph. The set of edges is denoted~$E$. This gives rise to a graph~$G = (V,E)$ (actually, a multi-graph, since self-loops and parallel edges are allowed).

Assume that this random graph is sampled and the contact process with parameter~$\lambda > 0$ then evolves on it; in what follows, we will fix~$d,\lambda$ and take~$n \to \infty$ (along values so that~$dn$ is even). Let us clarify that self-loops have no effect in the contact process dynamics,  and parallel edges behave as separate media for the transmission of the infection (so, if for instance there are~$k$ edges between vertices~$x$ and~$y$, then an infection at~$x$ is transmitted to~$y$ with rate~$k \lambda$).

Independently in the two references~\cite{lalley,mv16}, the following result was proved. Let~$\mathbb{T}_d$ denote the infinite~$d$-regular tree, and let~$\lambda_c(\mathbb{T}_d)$ denote the supremum of the values of~$\lambda$ for which the contact process on~$\mathbb{T}_d$ started from a single infection dies out almost surely.

\begin{theorem}[\cite{lalley,mv16}]\label{thm_mv} For the contact process on the random~$d$-regular graph~$G_n$, we have that
	\begin{itemize}
		\item[$\mathrm{(a)}$] if~$\lambda < \lambda_c(\mathbb{T}_d)$, then there exists~$C > 0$ such that
			\[\mathbb{P}(\tau_{G} < C\log(n)) \xrightarrow{n \to \infty} 1;\]
		\item[(b)] if~$\lambda > \lambda_c(\mathbb{T}_d)$, then there exists~$c > 0$ such that
			\[\mathbb{P}(\tau_{G} > \exp\{c n\}) \xrightarrow{n \to \infty}1.\]
	\end{itemize}
\end{theorem}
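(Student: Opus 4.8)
Throughout, we condition on two properties of $G_n$ that hold with probability tending to $1$ for the configuration model: $G_n$ is a bounded-degree expander (for some $\beta=\beta(d)>0$, every $S$ with $|S|\le n/2$ has $e(S,V\setminus S)\ge\beta|S|$), and $G_n$ is locally tree-like, in the sense that for each fixed $R$ all but $o(n)$ vertices have $B(v,R)$ isometric to the ball of radius $R$ in $\mathbb{T}_d$, the remaining vertices lie near one of $n^{o(1)}$ cycles of length at most $c_1\log n$, and --- for $c_1$ small enough --- distinct such cycles are at graph-distance $\Omega(\log n)$ from one another. We run the contact process on this fixed graph via the graphical construction, so that monotonicity in the initial set and the self-duality $\mathbb{P}(v\in\xi^V_t)=\mathbb{P}(\xi^{\{v\}}_t\ne\emptyset)$ are available, where $\xi^A_t$ denotes the state at time $t$ of the process started from $A$ infected.

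\textbf{Part (a) ($\lambda<\lambda_c(\mathbb{T}_d)$).} Two classical inputs about the contact process on $\mathbb{T}_d$ strictly below its (unique) critical value are used: exponential decay --- $\mathbb{P}(\xi^{\{o\}}_t\ne\emptyset)\le Ce^{-ct}$, and the probability that the process from a single site ever reaches graph-distance $r$ is at most $Ce^{-cr}$ --- and finite speed: the maximal graph-distance from $v$ ever reached by the process in $[0,t]$ exceeds $Kt$ with probability at most $Ce^{-ct}$, for $K$ large. By self-duality, $\mathbb{P}(v\in\xi^V_t)$ is the survival probability of a contact process started at $v$; taking $t=C\log n$ and a vertex $v$ that is tree-like out to radius $\lfloor KC\log n\rfloor+1$, finite speed allows comparison with $\mathbb{T}_d$ and yields $\mathbb{P}(v\in\xi^V_{C\log n})\le C'n^{-cC}=o(1/n)$ once $C$ is large, so a union bound over the tree-like vertices makes all of them healthy at time $C\log n$. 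The $n^{o(1)}$ remaining vertices need a short extra estimate: an infection started at such a vertex either stays within the $n^{o(1)}$-vertex neighbourhood of its cycle --- a finite graph (a tree with one extra edge) on which the same subcritical decay applies --- or crosses an $\Omega(\log n)$-long tree stretch to reach another short cycle, which a subcritical process does with probability $n^{-\Omega(1)}$, summable over the $n^{o(1)}$ cycles. Hence $\xi^V_{C\log n}=\emptyset$, that is $\tau_{G_n}<C\log n$, with probability $\to 1$.

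\textbf{Part (b) ($\lambda>\lambda_c(\mathbb{T}_d)$).} Everything reduces to a \emph{metastability estimate}: there exist $\varepsilon_0,T_0,c_0>0$ such that, for all large $n$ and \emph{every} configuration $A$ with $|A|\ge\varepsilon_0 n$, one has $|\xi^A_{T_0}|\ge\varepsilon_0 n$ except on an event of probability at most $e^{-c_0 n}$. Granting this, start from $\xi^V_0=V$ (which has $\ge\varepsilon_0 n$ infected sites) and apply the estimate successively along the times $T_0,2T_0,\dots$ using the Markov property; a union bound over the first $\lceil e^{cn}\rceil$ of them (any $c<c_0$) shows that the infection is still present at time $e^{cn}$ with probability at least $1-e^{cn}e^{-c_0 n}\to 1$, which is the claim. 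To prove the metastability estimate, one looks for a functional of the infected set whose drift is bounded below by a positive multiple of $n$ while the infection density lies in a window $(\varepsilon_0,\alpha_0)$, and then upgrades this drift to an $e^{-c_0 n}$ failure bound by a concentration argument (over a unit time interval, the numbers of recoveries and of new infections are sums of weakly dependent contributions and concentrate on scale $\sqrt n$); the source of the positive drift is a finite-volume supercriticality criterion for the contact process on large balls of $\mathbb{T}_d$ --- valid because $\lambda>\lambda_c(\mathbb{T}_d)$ --- transported to $G_n$ through local tree-likeness.

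\textbf{The main obstacle.} All the difficulty lies in the construction of that functional and the transport of the block criterion. The naive choice --- the number of infected vertices --- has drift $\lambda\,e(\xi_t,\xi_t^c)-|\xi_t|$, and since a connected subset of a tree on $k$ vertices has exactly $(d-2)k+2$ outgoing edges, this is positive only when $\lambda>1/(d-2)$, a value strictly larger than $\lambda_c(\mathbb{T}_d)$. The gap is real: for $\lambda\in(\lambda_c(\mathbb{T}_d),1/(d-2))$ the contact process on $\mathbb{T}_d$ survives \emph{without} its infected set having positive size-drift --- it survives by dispersing --- and the point of the argument is to show that on the finite graph, where dispersal instead makes the infection fill $G_n$ and equilibrate at a positive density, this still forces a positive drift of the right, tree-aware functional. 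The phenomenon is most delicate in the ``weak survival'' window $\lambda\in(\lambda_1(\mathbb{T}_d),\lambda_2(\mathbb{T}_d))$ (with $\lambda_1=\lambda_c(\mathbb{T}_d)$ and $\lambda_2$ the threshold for local survival), where the contact process on $\mathbb{T}_d$ is locally transient: one cannot argue that an infected vertex keeps its neighbourhood infected, the block construction must be run on the set of \emph{ever}-infected vertices through the tree-like scales (valid only while $O(\sqrt n)$ vertices have been explored, beyond which the explored neighbourhoods stop being trees), and only afterwards can the resulting linear reservoir of ever-infected vertices be converted into linearly many \emph{simultaneously} infected vertices feeding the metastability estimate. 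Making the sharp threshold and this local transience work together in a single argument is, in my view, the crux.
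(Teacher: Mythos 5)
First, note that the paper does not prove this statement at all: Theorem~\ref{thm_mv} is imported verbatim from \cite{lalley,mv16}, so there is no internal proof to compare against; your proposal has to be judged on its own merits as a reconstruction of those papers' arguments.

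Part (a) of your scheme is essentially correct, though heavier than necessary. The duality-plus-union-bound reduction to $\mathbb{P}(\xi^{\{v\}}_{C\log n}\neq\emptyset)$ and the use of $\mathbb{E}|\xi^{\{o\}}_t|\le Ce^{-ct}$ for $\lambda<\lambda_1(\mathbb{T}_d)$ are exactly the right inputs. But your separate treatment of the $n^{o(1)}$ vertices near short cycles quietly assumes that subcritical decay on $\mathbb{T}_d$ transfers to ``$\mathbb{T}_d$ plus one extra edge,'' which is not automatic from monotonicity (adding edges increases the process). The clean way, and the one used in \cite{mv16}, is the universal covering map: the contact process on \emph{any} $d$-regular multigraph started from one vertex is dominated, via lifting to the cover $\mathbb{T}_d$, by the contact process on $\mathbb{T}_d$ from one vertex. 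This gives $\mathbb{P}(\tau_G>C\log n\mid\xi_0=\{v\})\le Ce^{-cC\log n}$ uniformly over all vertices and all $d$-regular graphs, making the entire local-structure and short-cycle discussion unnecessary.

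Part (b) contains a genuine gap, and it is the whole theorem. Your reduction of exponential survival to a single metastability estimate (density $\varepsilon_0 n$ regenerates density $\varepsilon_0 n$ after time $T_0$ up to an $e^{-c_0n}$ failure event, then iterate via the Markov property) is standard and fine. But the estimate itself is never proved: you propose to find ``a functional of the infected set whose drift is bounded below,'' correctly observe that the obvious functional $|\xi_t|$ fails throughout $\lambda\in(\lambda_1(\mathbb{T}_d),1/(d-2))$, and then state that constructing the right ``tree-aware functional'' in the weak-survival window is the crux --- without constructing it. That is precisely the content of the theorem, so what you have is a correct identification of the difficulty rather than a proof. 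For the record, \cite{lalley,mv16} do not resolve this by a drift functional on a density window at all. They use the fact that for every $\lambda>\lambda_1(\mathbb{T}_d)$ the supercritical contact process on $\mathbb{T}_d$, conditioned on survival, produces exponentially many well-separated infected ``seeds'' at distance $R$ within time $O(R)$ (an exponential-growth / weak-survival profile result, valid in particular throughout the weak survival phase where there is no local survival). Transporting this block event into disjoint tree-like balls of $G_n$ yields a supercritical branching structure of seeds which (i) carries a single infection up to density $\delta n$ with probability bounded below, and (ii) shows that $\delta n$ well-separated infected vertices regenerate at least $\delta n$ infected vertices after constant time, with failure probability $e^{-cn}$ by independence of the seeds' offspring counts and a binomial concentration bound. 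If you want to complete your write-up, you need to state and use such a growth result for the tree (e.g.\ the results of Madras--Schinazi or the growth-profile estimates used in \cite{lalley}); without it, part (b) is an announcement of a strategy, not an argument.
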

The relevance of the threshold value~$\lambda_c(\mathbb{T}_d)$ comes from the fact that~$G$ rooted at a vertex chosen uniformly at random converges locally, in the sense of Benjamini and  Schramm~\cite{bs11}, to~$\mathbb{T}_d$ rooted at an arbitrary vertex.

\subsection{The contact process on the switching random~$d$-regular graph; main result} We now define an edge switching dynamics on the random~$d$-regular graph, a mechanism first introduced and studied in~\cite{cdg}. Let~$G=(V,E)$ be a realization of the random~$d$-regular graph on~$n$ vertices. Let~$e=\{(x,a),(y,b)\}$ and~$e'=\{(x',a'),(y',b')\}$ be two edges  of the graph, and assume that~$(x,a) < (y,b)$ and~$(x',a') < (y',b')$ in the lexicographic order of the set of half-edges~$H$. The \textit{switch with mark~$\mathsf{m} = (\{e,e'\},+)$} is the transformation that turns the graph~$G$ into the graph~$\Gamma^\mathsf{m}(G)$, which is equal to~$G$, except that the edges~$e,e'$ are removed and the two new edges~$\{(x,a),(x',a')\}$ and~$\{(y,b),(y',b')\}$ are added. We call this a \textit{positive switch}, as it makes a correspondence in accordance with the lexicographic order of half-edges (smaller with smaller, larger with larger). Similarly, the switch with mark~$\mathsf{n} = (\{e,e'\},-)$ is the transformation that turns~$G$ into~$\Gamma^\mathsf{n}(G)$, which is equal to~$G$ except that~$e,e'$ are removed and the edges~$\{(x,a),(y',b')\}$ and~$\{(x',a'),(y,b)\}$ are added; we call this a negative switch.

We now introduce a continuous-time Markov chain~$(G_t)_{t \ge 0}$ on the spaces of~$d$-regular graphs on~$n$ vertices as follows. We take~$G_0$ as a random~$d$-regular graph on~$n$ vertices chosen uniformly at random. Given the state~$G_t$ at time~$t$, for each of the~${dn/2 \choose 2}\cdot 2 = \frac{dn}{2}(\tfrac{dn}{2}-1)$ switch marks~$\mathsf{m}$ that can be formed from~$G_t$, we prescribe that the chain performs the jump~$G_t \to \Gamma^\mathsf{m}(G_t)$ with rate~$\frac{\mathsf{v}}{nd}$, where~$\mathsf{v} > 0$ is a parameter for the graph dynamics. It is readily seen that the uniform distribution on random~$d$-regular graphs on~$n$ vertices is stationary with respect to this  dynamics. The reason for the choice of rate~$\frac{\mathsf{v}}{nd}$ is that we want a fixed edge to be involved in a switch with a rate that is approximately equal, as~$n \to \infty$, to the parameter~$\mathsf{v}$. We call~$(G_t)_{t \ge 0}$ a \textit{switching graph with switch rate~$\mathsf{v}$}.

We will now consider the process~$(G_t,\xi_t)_{t \ge 0}$, where~$(G_t)$ is a switching random~$d$-regular graph on~$n$ vertices and~$(\xi_t)$ is a contact process with infection rate~$\lambda$. The definition of the contact process on the evolving graph is similar to that on the static one; a formal description is given in Section~\ref{sec_prelim}. As before, we start the contact process from all vertices infected at time zero, and study the extinction time, defined as the hitting time of the all-healthy configuration; here this time is denoted~$\tau_{(G_t)}$. Our main result is as follows.

\begin{theorem}
	\label{thm_main} Let~$d \ge 3$. For each~$\mathsf{v} > 0$ there exists~$\bar{\lambda}(\mathsf{v}) \in (0,\lambda_c(\mathbb{T}_d))$ (depending also on~$d$) such that the following holds. For any~$\lambda > \bar{\lambda}(\mathsf{v})$, there exists~$c > 0$ such that the extinction time of the contact process with infection rate~$\lambda$ on the switching random~$d$-regular graph~$(G_t)_{t \ge 0}$ with switch rate~$\mathsf{v}$ satisfies
	\[\mathbb{P}(\tau_{(G_t)} > \exp\{cn\}) \xrightarrow{n \to \infty} 1.\]
\end{theorem}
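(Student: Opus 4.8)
The plan is to combine a \emph{local} survival statement for an infinite limiting object with a \emph{renormalization} argument that upgrades local survival to an exponential lower bound on the extinction time of $(G_t)_{t\ge 0}$. The renormalization follows a scheme that is by now standard for the contact process on expander graphs (it underlies, for instance, part (b) of Theorem~\ref{thm_mv}), adapted to accommodate the graph dynamics; the genuinely new ingredient is the local statement, which is also where the inequality $\bar\lambda(\mathsf{v})<\lambda_c(\mathbb{T}_d)$ is produced. The bound $\bar\lambda(\mathsf{v})>0$ is the easy half of $\bar\lambda(\mathsf{v})\in(0,\lambda_c(\mathbb{T}_d))$: since $G_t$ is $d$-regular at every instant and the graph dynamics is independent of the infection, the first-moment bound $\mathbb{E}\,|\xi_t|\le e^{(\lambda d-1)t}\,|\xi_0|$ holds and forces extinction when $\lambda<1/d$.

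\emph{The limiting object and the strict inequality.} I would first describe the Benjamini--Schramm-type limit of the switching random $d$-regular graph: one starts from $\mathbb{T}_d$ and refreshes each edge at rate $\mathsf{v}$, where refreshing an edge $e$ re-wires it so that, from the point of view of a fixed root, the portion of the tree lying beyond $e$ is replaced by an independent fresh copy of the corresponding subtree of $\mathbb{T}_d$ --- this is the effect, in the local limit, of switching $e$ with a uniformly chosen far-away edge. Running the contact process from a single infection on this evolving tree $(\mathcal{T}_t)_{t\ge 0}$, let $\bar\lambda(\mathsf{v})$ be the infimum of the rates $\lambda$ for which it survives with positive probability. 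To prove $\bar\lambda(\mathsf{v})<\lambda_c(\mathbb{T}_d)$ I would build an embedded supercritical branching process: an infected vertex recovers at rate $1$ and emits a new lineage each time it transmits to a currently healthy neighbour along one of its $d$ channels and a later refresh of that edge detaches the offspring's subtree, making the offspring an essentially independent founder. The point that makes the threshold drop \emph{strictly} below $\lambda_c(\mathbb{T}_d)$ is that, in the limiting object, a channel that has just been refreshed leads to a \emph{typical}, hence healthy, vertex --- so each infected vertex keeps being presented with fresh healthy targets, instead of wasting transmissions on already-infected or recently-recovered sites as on the static $\mathbb{T}_d$. Showing that the mean number of offspring of this embedded process exceeds $1$ for some $\lambda$ strictly below $\lambda_c(\mathbb{T}_d)$ is, I expect, the main obstacle: it requires quantifying the gain from this ``freshness'' against the static critical process, presumably via a comparison or perturbation argument anchored at $\lambda=\lambda_c(\mathbb{T}_d)$, at which the static process is (barely) dying but the switching version must be shown to be strictly supercritical.

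\emph{From local survival to exponential extinction time.} Granting survival on $(\mathcal{T}_t)$ at some $\lambda_0<\lambda_c(\mathbb{T}_d)$, fix $\lambda>\lambda_0$. The core is a one-block estimate: there are $\epsilon>0$, $T<\infty$ and $c'>0$ such that, whenever the graph at the start of the block is sufficiently expander-like and the infected set $A$ satisfies $|A|\ge\epsilon n$, the contact process run for time $T$ has $|\xi_T|\ge\epsilon n$ with probability at least $1-e^{-c'n}$. To prove this I would (i) couple, for all but a negligible fraction of vertices, the radius-$O(T)$ space-time neighbourhood over $[0,T]$ with that of $(\mathcal{T}_t)_{t\le T}$, using that only $O(nT)$ switches occur during a block, so these neighbourhoods are mostly tree-like and mostly touched only by ``benign'' switches; (ii) conclude that a definite fraction of the infections in $A$ independently survive and moreover grow to a large constant size by time $T$ (survival on a supercritical object entails survival-and-growth with positive probability), the near-independence coming from the sparsity of the dependency graph among the starting points together with the expansion of $G_t$; (iii) run a concentration argument to show the number of surviving, grown clusters is at least a constant multiple of $|A|$ with probability $1-e^{-c'n}$, which keeps the density above $\epsilon$ --- here one chooses $\epsilon$ strictly below the equilibrium density of the process, so that the density actually relaxes upward toward equilibrium rather than being merely preserved. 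Chaining the one-block estimate over $e^{cn}/T$ blocks for any $c<c'$ and taking a union bound then yields $\mathbb{P}(\tau_{(G_t)}>e^{cn})\to 1$. A recurring technical point is that $G_t$ is only \emph{typically} expander-like, so the one-block estimate must be arranged to survive the occasional ill-behaved graph configuration --- rare at any fixed time by stationarity, but not rare enough to be union-bounded away over an exponentially long horizon --- rather than relying on uniform good behaviour of the environment.
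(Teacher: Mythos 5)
Your sketch correctly identifies the two phenomena that drive the result (switches present infected vertices with fresh healthy targets, and detach infected pieces into essentially independent founders), but both of the genuinely hard steps are left as acknowledged obstacles rather than proved, and the second is set up in a way that would be hard to push through. On the strict inequality: your limiting object is a contact process on a single root-centred tree whose edges refresh, whereas the paper's object is the \emph{herds process} --- a growing family of independent contact processes on separate copies of $\mathbb{T}_d$, in which a herd splits in two whenever an edge separating two occupied portions rings at rate $\mathsf{v}$. Your ``embedded branching process of detached lineages'' is implicitly this object, but you never supply the argument that its mean offspring number exceeds one for some $\lambda$ strictly below $\lambda_c(\mathbb{T}_d)$. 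The paper does this by a marked-particle coupling that realizes the static contact process on $\mathbb{T}_d$ inside the herds process, freezing a herd the moment it loses all its marked particles, and then chaining generator inequalities for a sequence of functionals ($F_1$ counts marked particles, $F_2$ through $F_4$ count certain boundary pairs, $F_5$ counts frozen herds) to show that $\mathbb{E}[F_5] \gtrsim \int_0^\infty \mathbb{E}|\xi_t|\,\mathrm{d}t$, which diverges as $\lambda \uparrow \lambda_c(\mathbb{T}_d)$ for the critical contact process on trees. Without some such quantitative mechanism, the perturbation ``anchored at $\lambda=\lambda_c(\mathbb{T}_d)$'' that you allude to has no content, and this is precisely where the theorem's novelty lies.

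On the finite-graph side, the one-block density-maintenance scheme faces two problems you flag but do not resolve. First, the radius-$O(T)$ space-time neighbourhoods of distinct infected vertices are not independent, nor approximately so in a form a standard dependency-graph concentration bound covers, because a single switch couples two arbitrarily distant edges; and the ``equilibrium density'' toward which you want the density to relax is not a defined quantity for this model. Second, ruling out ill-behaved graph configurations over an exponentially long horizon needs an actual argument, not just stationarity at fixed times. The paper avoids the block/density structure entirely: it truncates the herds process to an $h$-herds process living on diameter-$2h$ subtrees, takes the Perron--Frobenius eigenfunction $f$ of the resulting multi-type branching process, embeds a family of disjoint $h$-herds directly into $G_t$ (dominated from above by the true infected set), and shows that $X_t=\sum_i f(B^i_t,\beta^i_t)$ satisfies a supermartingale estimate for $e^{-\delta X_t}$ as long as most edges remain usable for splittings; the rare bad graphs are handled by an occupation-time argument (Proposition~\ref{prop_no_loops}) showing the number of short loops stays below $\kappa n$ throughout an interval of length $e^{cn}$ with probability $1-e^{-cn}$. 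These are the ingredients your outline would still need to supply.
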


A highlight of this result is the fact that the long-term persistence of the infection on~$(G_t)$ holds for values of~$\lambda$ that are smaller than~$\lambda_c(\mathbb{T}_d)$. The process with such infection rates on the static version of the graph would reach extinction  quickly, by Theorem~\ref{thm_mv}. A rough intuitive explanation for this phenomenon is that the switchings can aid the spread of the infection: they introduce the possibility of separating a pair (transmitter, target) right after a transmission, possibly allowing both the transmitter and the target to now transmit the infection to their new neighbors, in case these new neighbors happen  to be healthy.

\subsection{Methods of proof and organization of paper}  The value~$\bar{\lambda}(\mathsf{v})$ that appears in Theorem~\ref{thm_main} is obtained as the critical value of an auxiliary process that is in a sense the local limit of the contact process on~$(G_t)$. We call this auxiliary process the  \textit{herds process}, as it consists of an evolving family of contact processes, all independent, each occupying its separate copy of the infinite tree~$\mathbb{T}_d$; each process in the family is called a \textit{herd}. Apart from the contact process evolution in each herd (which follows the usual rules of growth with rate~$\lambda$ and death with rate~$1$), herds can split. That is, for each herd and each edge that delimits two non-empty subsets of this herd, we take an exponential clock with rate~$\mathsf{v}$, and when this clock rings, the herd is replaced by two new herds, each containing one of the two aforementioned subsets. See Section~\ref{sec_basic} for a formal definition.

In that section, we start the study of the herds  process and state, in Theorem~\ref{thm_main}, that its critical value is  strictly smaller than~$\lambda_c(\mathbb{T}_d)$. The proof of this theorem is postponed to Section~\ref{sec_freeze}, to ease the flow of the exposition. The argument for this proof involves a coupling between the contact process on~$\mathbb{T}_d$, on the one hand, and the herds process, on the other hand, in a way that the former is stochastically dominated by the latter. We then show that, if~$\lambda$ is only slightly below~$\lambda_c(\mathbb{T}_d)$, then the mechanism of separation of transmitter and target described in the paragraph following Theorem~\ref{thm_main} occurs many times. This yields many occasions where nothing happens in the contact process, while new infections appear in the herds process. These extra particles can then be used to obtain a supercritical branching structure embedded inside the herds process.

In order to show that the contact process on~$(G_t)$ locally resembles the herds process, we first need to study a truncation of the latter, which we call the~$h$-herds process. This is done in Section~\ref{s_hherds}. In this alternate process, rather than evolving in~$\mathbb{T}_d$, herds evolve in finite subgraphs of~$\mathbb{T}_d$, each with diameter~$2h$. We argue that if the herds process is supercritical for a certain pair of parameter values~$(\lambda,\mathsf{v})$, then the~$h$-herds process with the same parameters and sufficiently large~$h$ is also supercritical. The~$h$-herds process can be regarded as a continuous-time multi-type branching process. Using this perspective, we take the associated Perron-Frobenius eigenvalue (which is larger than one in the supercritical regime) and associated eigenfunction (which is then a sub-harmonic function with respect to the dynamics of the~$h$-herds process).

Finally, in Section~\ref{s_embed}, we go back to~$(G_t,\xi_t)$, the contact process on the switching graph, showing how the results  from Section~\ref{sec_basic} and~\ref{s_hherds} lead to the proof of Theorem~\ref{thm_main}. We show that we can extract a collection of disjoint subsets of~$G_t$ (together with the infected vertices inside these subsets) and argue that the evolution of these subsets and the infection inside them closely resembles that of an~$h$-herds process. We use the sub-harmonic function mentioned above and martingale arguments to implement the comparison.

\subsection{Discussion and related works} As already mentioned, Theorem~\ref{thm_main} reveals an instance of metastability of the contact process. In forthcoming work,  the regime where~$(G_t,\xi_t)$ has parameter values~$(\mathsf{v},\lambda)$ with~$\lambda < \bar{\lambda}(\mathsf{v})$ will be studied, and it will be shown that fast extinction occurs in that case. This will complete the picture of a finite-volume phase transition.

Let us observe that the inequality of critical values in Theorem~\ref{thm_main} can be expressed as~$\bar{\lambda}(\mathsf{v}) < \bar{\lambda}(0)$ for~$\mathsf{v} > 0$, since the herds process with~$\mathsf{v} = 0$ is just a contact process on~$\mathbb{T}_d$. We conjecture that the function~$\mathsf{v} \mapsto \bar{\lambda}(\mathsf{v})$ is strictly decreasing on~$(0,\infty)$.

Apart from the aforementioned cases of lattice boxes and the random~$d$-regular graph, there are many works in the literature concerning finite-volume phase transitions of the contact process (on static graphs). See~\cite{cd09, mvy13} for the configuration model,~\cite{cd21,bnns21} for both the configuration model and the Erd\H{o}s-Renyi graph,~\cite{bbcs05,can17} for the preferential attachment graph, and~\cite{st01,cmmv} for truncated trees. There has also been recent progress on dynamic graph models; see~\cite{jm17,jlm19}.

\subsection{Some set and graph notation}
For any~$m \in \mathbb{N}$, we write~$[m]:= \{1,\ldots, m\}$. For a set~$A$, we denote by~$|A|$ the number of elements of~$A$.  We employ the usual abuse of notation of associating, for a set~$A$, a configuration~$\xi\in\{0,1\}^A$ with the set~$\{x \in A:\xi(x) = 1\}$.

In the rest of the paper,~$d \in \mathbb{N}$,~$d \ge 3$ will be kept fixed, and dependence on~$d$ will be omitted. In particular, we let~$\mathbb{T}$ denote the infinite~$d$-regular tree, with a distinguished root vertex~$o$. We sometimes abuse notation and use the same symbol to denote a graph and its set of vertices. 

We now present some of the graph notation we will employ. In Section~\ref{s_embed} our notation will need to accommodate to multi-graphs (where self-loops and parallel edges are allowed), but everywhere else in the paper we only deal with simple graphs, and in fact subgraphs of the infinite~$d$-regular tree. The notation we present here is intended for this simpler setting, and in Section~\ref{s_embed} we give the necessary additions.

Let~$G = (V,E)$ be a graph. We write~$x \sim y$ when vertices~$x$ and~$y$ are neighbors, and let~$\deg(x)$ denote the degree of~$x$. Let~$\mathrm{dist}(x,y)$ denote the graph distance between~$x$ and~$y$. When we wish to make the graph explicit, we write~$x \stackrel{G}{\sim}y$,~$\deg_G(x)$ and~$\mathrm{dist}_G(x,y)$. We let~$\mathcal{B}_G(x,r)$ denote the ball (in graph distance) with center~$x$ and radius~$r \in \mathbb{N}$.  The diameter of~$G$ is the maximum  attained by the graph distance between vertices of~$G$.

As already mentioned, we denote by~$\lambda_c(\mathbb{T})$ the supremum of the values of~$\lambda$ for which the contact process with rate~$\lambda$ on~$\mathbb{T}$ dies out almost surely, when started from finite configurations.

\section{The herds process} \label{sec_basic}
In this  section, we will define a Markov process whose state at a given time is an indexed family of finite subsets of the infinite~$d$-regular tree. Each of these finite sets is called a \textit{herd}. Herds evolve as independent contact processes, but can also split into two.  

The reason to introduce the herds process is that it arises naturally as a local limit of the contact process on a switching random~$d$-regular graph. Indeed, suppose that~$(G_t,\xi_t)_{t \ge 0}$ is defined as in the introduction, and that~$(\xi_t)$ starts with a single infection at a vertex chosen uniformly at random. Further suppose that we follow the dynamics of the infection ``within a fixed window'', that is, we watch the evolution of the set of infected vertices, but do not pay attention to regions of the graph that are free from infection. Then, apart from low-probability encounters with regions of the graph where loops are present, we would observe the contact process being naturally split into different ``islands'', with each island being further subdivided if one of its edges splits.

\subsection{Definition of the herds process}
For the rest of this section, we fix~$\mathsf{v} > 0$ and~$\lambda > 0$.  The herds  process will be denoted by~$(\Xi_t)_{t \ge 0}$; its state at a given time~$t$ is given by
\[
	\Xi_t = \left(\mathcal{J}_t,\{\eta^i_t:\;i \in \mathcal{J}_t \}\right),
\]
where~$\mathcal{J}_t$ is a finite set of indices (whose values are unimportant, but for concreteness we take~$\mathcal{J}_t \subset \mathbb{N}$) and for each~$i \in \mathcal{J}_t$,~$\eta^i_t$ is a finite subset of~$\mathbb{T}$. These subsets are called the herds at time~$t$. We say that an element of a herd is a \textit{particle}, and that particles can die and give birth. In this context, we deviate from the usual terminology involving infections, recoveries and transmissions, and instead say that  an element of a herd is a \textit{particle}, and that particles can die and give birth. We call the parameter~$\lambda$ a birth rate instead of an infection rate.

Let us first define~$(\Xi_t)_{t \ge 0}$ informally. Given a state~$\Xi_t = (\mathcal{J}_t,\{\eta^i_t\})$ at time~$t$, the chain evolves at times~$s \ge t$ as follows. Each of the herds~$\eta^i_s$ evolves independently as a contact process on~$\mathbb{T}$. Additionally, herds can split, as follows. Whenever an edge~$e$ delimits two non-empty portions of the herd (that is, the two connected components of~$\mathbb{T}$ obtained by the deletion of~$e$ intersect the herd), this edge is endowed with an exponential clock of rate~$\mathsf{v}$ (each clock is specific to a pair (edge, herd), and the clocks are all independent). When a clock rings, the herd is split, meaning that it is deleted and replaced by two new herds, each containing one of the two herd portions that were delimited by~$e$. The index set is adjusted according to these transitions: when a herd becomes empty (following the death of its last particle) its index is deleted, and when a herd splits, its index is replaced by two new indices corresponding to the new herds that replace it.

To give a formal definition, let us introduce some notation. Let~$\eta$ be a set of vertices of~$\mathbb{T}$ and~$e=\{u,v\}$ be an edge of~$\mathbb{T}$. The graph obtained  by removing~$e$ from~$\mathbb{T}$ has two connected components, one containing~$u$ and the other~$v$. Let~$\eta^{e,u}$ and~$\eta^{e,v}$ denote the intersection of~$\eta$ with the corresponding components.

Now, in order to define the continuous-time Markov chain~$(\Xi_t)_{t \ge 0}$, it suffices to specify all kinds and rates of jumps  the chain can perform from a fixed state~$\Xi = (\mathcal{J},\;\{\eta^i_t:i \in \mathcal{J}\})$ (we will also show non-explosiveness shortly). They are as follows:
\begin{itemize}
	\item contact birth -- for each~$i \in \mathcal{J}$,~$u \in \eta^i$ and~$v \sim u$, with rate~$\lambda$: the process jumps from~$\Xi$ to the state in which the index set~$\mathcal{J}$ is kept the same as in~$\Xi$ and the herds with index~$j \neq i$ are kept the same as in~$\Xi$, while herd~$\eta^i$ is replaced by~$\eta^i\cup\{v\}$;
	\item contact death, with removal of empty herds -- for each~$i \in \mathcal{J}$ and each~$x\in\eta^i$, with rate one: the process jumps from~$\Xi$ to the state~$\Xi'$ defined as follows. In case~$\eta^i = \{x\}$, then herd~$i$ is simply deleted: the index set of~$\Xi'$ is~$\mathcal{J}\backslash\{i\}$, and all other herds are left unchanged. In case~$\eta^i \neq \{x\}$, then~$\Xi'$ has the same index set~$\mathcal{J}$ as~$\Xi$; the herds with index~$j \neq i$ are kept the same as in~$\Xi$, while herd~$\eta^i$ is replaced by~$\eta^i\backslash \{x\}$;
	\item herd splitting: for each~$i \in \mathcal{J}$ and each edge~$e=\{u,v\}$ for which~$(\eta^i)^{e,u}$ and~$(\eta^i)^{e,v}$ are both non-empty, with rate~$\mathsf{v}$: the process jumps from~$\Xi$ to the state~$\Xi'$ defined as follows. The index set of~$\Xi'$ is~$\mathcal{J}':=(\mathcal{J}\backslash \{i\})\cup\{i_1,i_2\}$, where~$i_1,i_2$ are arbitrary natural numbers not belonging to~$\mathcal{J}$. All herds~$\eta^j$ with~$j \neq i$ are unchanged, and~$\eta^{i_1}:= (\eta^i)^{e,u}$ and~$\eta^{i_2}:=(\eta^i)^{e,v}$.
\end{itemize}

Unless we explicitly mention otherwise, we will assume that the herds process is started from a single herd with a single particle at time zero. We also emphasize that we will never consider this process started from a configuration with either infinitely many herds or with one or more herds with infinitely many particles. 

We now make three observations about the herds process.
\begin{itemize}
	\item[(a)] \textit{Non-explosiveness.} The following is a brief argument to show that the herds process  almost surely performs finitely many jumps in finite time intervals. Let~$X_t$ denote the number of times in~$[0,t]$ the process~$(\Xi_t)$ has performed a jump of either the ``contact birth'' or ``contact death'' types.  Then,~$(X_t)_{t \ge 0}$ is stochastically dominated by a  continuous-time pure-birth process~$(Z_t)_{t \ge 0}$ on~$\mathbb{N}$ that jumps from~$m$ to~$m+1$ with rate~$(d\lambda+1) m$. Since~$(Z_t)$ is non-explosive, so is~$(X_t)$. Next, note that between any two jumps of~$(X_t)$, there is a maximum number of split-type transitions that can occur in~$(\Xi_t)$ (until the point is reached when all particles are isolated in a herd and no more splits can happen). This concludes the proof.
	\item[(b)] \textit{Genealogy of herds.} By keeping track of a parent-child relation between herds whenever there is a split, we naturally obtain a genealogical relation between herds along time. That is,  the set of herds at any time~$t$ could be partitioned according to the herd at some earlier time~$s$ they descend from. We refrain from introducing notation in this direction for the sake of simplicity, but will occasionally refer to the genealogical structure in our proofs.
	\item[(c)] \textit{Survival.} We say that the herds process dies if there exists some time~$t$ at which the index set is empty (due to the death of the last herd at some earlier time); we then write~$\Xi_t = \varnothing$ (and we evidently have~$\Xi_s = \varnothing$ for all~$s \ge t$). In the event that this does not hold for any~$t$, we say that the process survives. Using elementary irreducibility considerations, it is not hard to see that the survival probability is either positive for any non-empty initial configuration or zero for any  initial configuration (since we only take finite initial configurations).
\end{itemize}
In light of the last comment, we define
\begin{equation}\label{eq_def_of_bar_lambda}
	\bar{\lambda}(\mathsf{v}):= \sup\{\lambda:\;\mathbb{P}_{\lambda,\mathsf{v}}((\Xi_t)\text{ dies})= 1\}, 
\end{equation}
where~$\mathbb{P}_{\lambda,\mathsf{v}}$ denotes a probability measure under which~$(\Xi_t)$ with birth rate~$\lambda$ and split rate~$\mathsf{v}$ is defined.
 
The following strict inequality between critical rates is a fundamental ingredient in proving Theorem~\ref{thm_main}, and is also of independent interest.  We postpone the proof to Section~\ref{sec_freeze}. 
\begin{theorem}\label{thm_strict}
	For any~$\mathsf{v} > 0$ we have~$\bar{\lambda}(\mathsf{v}) < \lambda_c(\mathbb{T})$.
\end{theorem}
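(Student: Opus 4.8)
The plan is to construct a coupling between the contact process on $\mathbb{T}$ with rate $\lambda$ and the herds process with rates $(\lambda,\mathsf{v})$ in which the former is dominated by the latter — this is routine, since herd splitting only duplicates particles that would otherwise coincide in a single population, so the union of all herds always contains (a copy of) the contact process. The real content is to show that for $\lambda$ slightly below $\lambda_c(\mathbb{T})$ the herds process is nonetheless supercritical, i.e. that $\bar\lambda(\mathsf{v}) < \lambda_c(\mathbb{T})$ strictly. To do this I would quantify the ``extra'' particles produced by splits. Fix $\lambda$ with $\lambda_c(\mathbb{T}) - \varepsilon < \lambda < \lambda_c(\mathbb{T})$ for small $\varepsilon > 0$. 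Even though the contact process on $\mathbb{T}$ at this $\lambda$ dies out a.s.\ from a finite seed, it survives for a long time with non-negligible probability and, while alive, typically contains many particles spread over many edges; conditionally on reaching a moderately large size, it has a decent chance of producing a configuration from which, over a bounded time window, a split event separates a ``transmitter'' from its freshly infected ``target'' in such a way that both components are non-empty and each is itself a viable seed.

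The key steps, in order: (1) Set up the domination coupling; state that survival of the contact process is implied by survival of the herds process, so it suffices to lower-bound the latter. (2) Establish a \emph{renewal/branching} structure: show there exist constants $K \in \mathbb{N}$, $t_0 > 0$ and $p > 0$ (depending on $d,\mathsf{v},\lambda$) such that, starting from a single herd with a single particle, with probability at least $p$ the herds process at time $t_0$ consists of at least two herds, each containing at least one particle located in a way that is again a valid starting configuration for the same argument — more precisely, such that each new herd, restricted appropriately, stochastically dominates a fresh single-particle start. This is where the split mechanism is used: during the window $[0,t_0]$ one forces (with positive probability) a sequence of births building a connected infected region on several edges, followed by at least one split along an edge $e$ with both sides non-empty. (3) Iterate: the number of herds then dominates a Galton--Watson process whose offspring distribution has mean $\ge 2p$, which exceeds $1$ once $p > 1/2$; if $p$ cannot be pushed above $1/2$ directly, instead run the argument over a longer window or count herds at a later generation so that the expected number of ``good'' descendant herds exceeds $1$. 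Supercriticality of this embedded branching process gives positive survival probability for the herds process, hence $\lambda \notin \{\lambda : \mathbb{P}_{\lambda,\mathsf{v}}((\Xi_t)\text{ dies}) = 1\}$, hence $\bar\lambda(\mathsf{v}) \le \lambda < \lambda_c(\mathbb{T})$.

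The main obstacle is step (2): one must show that the \emph{split events actually help}, i.e.\ that conditioned on the contact process being in a ``healthy'' intermediate state, a split produces two independently-viable herds with probability bounded below uniformly in $n$ (here there is no $n$, but uniformly in the configuration one conditions on). The subtlety is that $\lambda$ is subcritical on $\mathbb{T}$, so a single particle is \emph{not} viable on its own — the whole point is that a herd with a few well-placed particles, after a split, yields two herds that each, on the time scale of the next window, look like a supercritical seed \emph{because} further splits will again intervene. This forces a somewhat delicate bootstrapping: the ``goodness'' predicate on a herd must be chosen self-referentially (a herd is good if it will, with probability $\ge p$, spawn $\ge 2$ good herds), and one shows such a predicate is non-vacuous by exhibiting an explicit finite configuration and an explicit finite sequence of births and splits realizing it with positive probability. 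Closing this self-referential loop — essentially a fixed-point argument for the survival probability, combined with an explicit local construction showing the map is supercritical near $\lambda_c(\mathbb{T})$ — is the crux; the authors defer it to Section~\ref{sec_freeze}, and I expect the explicit construction (choosing the right small configuration and the right split edge, and controlling the probability that the two resulting pieces do not immediately die) to be the most technical part.
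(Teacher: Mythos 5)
Your overall architecture (dominate the contact process on $\mathbb{T}$ by the herds process, then argue that splits generate enough ``extra'' independent seeds to build a supercritical embedded branching process) is the same as the paper's, and your step (1) corresponds exactly to the paper's marked-particle coupling (Proposition~\ref{prop_coupling}). However, there is a genuine gap at the heart of step (2)--(3), and you essentially admit it yourself: you never explain \emph{why} the expected number of good descendant herds can be pushed above one for some $\lambda<\lambda_c(\mathbb{T})$. A fixed finite local construction realized with some positive probability $p$ gives an offspring mean of order $2p$, but nothing forces $2p>1$; and ``run the argument over a longer window'' does not help by itself, because for $\lambda<\lambda_c(\mathbb{T})$ the underlying contact process is subcritical, so any count of descendants that does not quantify the gain from splits will remain subcritical. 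The self-referential ``goodness'' predicate you propose is exactly the loop that needs to be closed, and you defer closing it.

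The paper closes this loop with a specific quantitative mechanism that your proposal is missing. It freezes every herd that loses all its marked (= contact-process) particles; such frozen herds are, by construction, independent fresh seeds for new herds processes, so survival follows from a generating-function argument once the \emph{expected number of frozen herds exceeds one} (Lemma~\ref{lem_expectation_more}). The expected number of frozen herds is then bounded below, via a chain of generator inequalities for functionals $F_1,\dots,F_5$ (counting marked particles, boundary pairs, split-off unmarked neighbors, etc.), by a constant $c(d,\lambda,\mathsf{v})$ times $\int_0^\infty \mathbb{E}_\lambda[|\xi_t|]\,\mathrm{d}t$ for the contact process $\xi_t$ on $\mathbb{T}$. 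The decisive input is the known fact that this integral diverges as $\lambda\nearrow\lambda_c(\mathbb{T})$ (a nontrivial property of the critical contact process on trees, \eqref{eq_infinity_crit}), so the expected number of frozen herds exceeds one for $\lambda$ close enough to, but still below, $\lambda_c(\mathbb{T})$. Without this divergence (or some equivalent quantitative statement showing the separation events accumulate without bound near criticality), your argument establishes at best $\bar\lambda(\mathsf{v})\le\lambda_c(\mathbb{T})$, not the strict inequality. A secondary issue: the independence needed for your Galton--Watson comparison is not automatic, since the two herds produced by a split share history with the parent; the paper's freezing device is precisely what manufactures genuinely independent offspring.
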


The following simple fact will also be useful in the next section.
\begin{lemma}\label{lem_for_herds}
	\label{lem_Xi_large} Assume that~$\mathsf{v} >0$ and~$\lambda > \bar{\lambda}(\mathsf{v})$. Then, the herds process~$(\Xi_t) = (\mathcal{J}_t,\;\{\eta^i_t\})$ satisfies
	\[\mathbb{P}\left( |\mathcal{J}_t|\xrightarrow{t \to \infty} \infty \mid (\Xi_t) \text{ survives}\right) = 1.\]
\end{lemma}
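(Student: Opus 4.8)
\emph{Notation for the sketch.} For a non-empty finite $\eta \subset \mathbb{T}$, write $g(\eta) := \mathbb{P}_\eta((\Xi_t)\text{ survives})$ for the survival probability of the herds process started from the single herd $\eta$, and set $\rho := g(\{o\})$; since $\lambda > \bar{\lambda}(\mathsf{v})$, the definition~\eqref{eq_def_of_bar_lambda} gives $\rho > 0$. The plan is to reduce the statement to a supercritical ``backbone'' living inside the herds process. The first step is the elementary observation that, almost surely, every herd either dies or splits within a finite time: on the event that a given herd survives forever without ever splitting, it either spends infinite total time in configurations with at least two particles --- but such a configuration always has a qualifying edge, carrying a clock of rate $\ge \mathsf{v}$, so a split would eventually occur --- or else it spends infinite total time at a single particle, in which case its rate-one death clock would eventually kill it; either way we get a contradiction. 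Consequently, on the survival event infinitely many herds are created: if only finitely many were, there would be a last split, after which a fixed finite family of herds would evolve without splitting again and hence (by the previous observation) all die, so the process would die. In particular, on survival the genealogical tree of herds (observation~(b) above) is infinite and, having at most two children per node, contains an infinite path.

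The key input --- and the step I expect to be the main obstacle --- is the monotonicity estimate $g(\eta) \ge \rho$ for every non-empty $\eta$. I would prove it by coupling the herds process started from $\eta$ with the one started from $\{x\}$, where $x\in\eta$, so that at all times every herd of the $\{x\}$-process is contained in a distinct herd of the $\eta$-process. Contact births and deaths are handled by the usual basic coupling of contact processes applied within each pair of matched herds. The delicate part is the splitting mechanism: whenever an edge $e$ qualifies for a herd of the $\{x\}$-process it also qualifies for the herd of the $\eta$-process containing it, and one couples the two corresponding rate-$\mathsf{v}$ clocks; clocks are re-synchronised, using memorylessness of the exponential distribution, whenever a split forces herds to be recreated with fresh clocks. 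Under this coupling, survival of the $\{x\}$-process forces survival of the $\eta$-process, so $g(\eta) \ge g(\{x\}) = \rho$.

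With this in hand I would run the backbone argument. Call a herd a \emph{backbone herd} if its descendant subtree in the genealogy is infinite. On survival the family of backbone herds is non-empty, is closed under taking parents, and --- since a herd's subtree is infinite precisely when at least one of its children has infinite subtree --- every backbone herd has a backbone child, so this family is an infinite subtree of the genealogical tree. A backbone herd cannot die (else its subtree would be a single node), so by the first step it splits within a finite time and produces at least one backbone child; moreover, conditionally on its configuration at the split, it produces \emph{two} backbone children with probability at least $\rho^2$, because the two resulting subtrees evolve independently and each survives with conditional probability $g(\cdot)\ge\rho$. Now let $B_t$ be the number of indices $i\in\mathcal{J}_t$ whose herd is a backbone herd. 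Then $B_0=1$, $B_t$ is non-decreasing (deaths, and splits of non-backbone herds, leave it unchanged), and it increases by $1$ exactly at those backbone splits that produce two backbone children. Since some backbone herd is alive at every time and, by non-explosiveness, only finitely many splits occur in any finite interval, infinitely many backbone splits occur, at times tending to infinity; a conditional Borel--Cantelli argument (using the conditional independence of disjoint subtrees given their founding configurations, so that ``the $n$-th backbone split branches'' becomes amenable to it) then gives $B_t\to\infty$ almost surely on survival. Since $|\mathcal{J}_t|\ge B_t$, this is exactly the claim.

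To summarise the difficulty: the monotonicity estimate is intuitively the familiar ``extra particles only help survival'' statement, but making the coupling rigorous requires careful bookkeeping of the interaction between the basic coupling and the herd-splitting transitions (which clocks are shared, and the memorylessness re-synchronisation after a split). The measure-theoretic point in the last step --- formulating ``infinitely many backbone splits branch'' so that a conditional Borel--Cantelli lemma applies --- is routine but should be carried out with some care.
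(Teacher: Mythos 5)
Most of your route is sound, and it is genuinely different from the paper's: the observation that every herd dies or splits in finite time, the containment coupling giving $g(\eta)\ge\rho$ (this is the right coupling, and the paper itself invokes the same monotonicity without proof in Section~\ref{sec_freeze}), and the reduction to showing that the backbone of the genealogical tree branches infinitely often are all correct. The gap is in the last step. A conditional Borel--Cantelli argument does not apply as stated: neither the event ``the $n$-th backbone split branches'' nor even the identity of the $n$-th backbone split is measurable with respect to a filtration in which these events are revealed one at a time, since backbone membership is determined by the entire future of the process. Moreover, the naive repair --- bounding the probability that a backbone node fails to branch by $s(\eta_1)(1-s(\eta_2))+s(\eta_2)(1-s(\eta_1))\le 2(1-\rho)$ per generation, where $s=g$ is the survival probability and $\eta_1,\eta_2$ are the herds produced at the split --- is useless when $\rho\le\tfrac12$. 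A correct substitute is a recursion: let $v_k(\eta)$ be the probability that the lineage started from the single herd $\eta$ survives and its backbone has no branch point within its first $k$ generations, so that $v_0=s$ and $v_{k+1}(\eta)=\mathbb{E}\bigl[\mathds{1}_{\mathrm{split}}\bigl(v_k(\eta_1)(1-s_2)+v_k(\eta_2)(1-s_1)\bigr)\bigr]$ with $s_i=s(\eta_i)$. Using
\[ s_1(1-s_2)+s_2(1-s_1) = \bigl(1-(1-s_1)(1-s_2)\bigr) - s_1s_2 \le \bigl(1-(1-s_1)(1-s_2)\bigr) - \rho^2, \]
together with $\mathbb{E}\bigl[\mathds{1}_{\mathrm{split}}\bigl(1-(1-s_1)(1-s_2)\bigr)\bigr]=s(\eta)$ and $\mathbb{P}(\mathrm{split})\ge s(\eta)$, one gets $v_{k}(\eta)\le(1-\rho^2)^{k}s(\eta)$ by induction. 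Hence on survival the backbone branches at least once almost surely; applying this to the two (conditionally independent) sub-lineages created at each branch point and iterating yields infinitely many branch points, which is exactly the unboundedness of your $B_t$.

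For comparison, the paper's proof is much shorter and bypasses both the monotonicity coupling and the backbone: it uses the irreducibility-type bound that from any non-empty configuration the process produces, within one time unit and with probability at least some $q(k)>0$, $k$ herds that are singletons, and then runs a restart argument in which each trial (all $k$ singleton lineages surviving) succeeds with probability $\rho^k$; conditioned on survival some trial eventually succeeds, after which $|\mathcal{J}_t|\ge k$ forever. You may wish to adopt that route, or else close your argument with the recursion above in place of the Borel--Cantelli step.
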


\begin{proof}
	Since the proof involves standard arguments for Markov chains, we only sketch it.

	Fix~$k > 0$. Let~$\sigma$ denote the first time~$t$ at which there are~$k$ distinct indices~$i_1,\ldots,i_k \in \mathcal{J}_{t}$ such that the herds~$\eta^{i_1}_{t},\ldots,\eta^{i_k}_{t}$ are all singletons. We have that~$\sigma < \infty$ almost surely conditioned on~$\{(\Xi_t) \text{ survives}\}$, since it is not hard to see that there exists~$q = q(k) > 0$ such that, from any non-empty configuration at time~$t$, the process has probability at least~$q$ of producing~$k$ new herds that are singletons, and remain this way, until time~$t+1$.
	
	On the event~$\{\sigma < \infty\}$, fix~$k$ indices~$i_1,\ldots, i_k \in \mathcal{J}_\sigma$ such that~$\eta^{i_j}_\sigma$ is a singleton for each~$j$, and then make a trial, where a success is defined as the survival of all the~$k$ lineages started from the herds represented by~$i_1,\ldots, i_k$ at time~$\sigma$. The probability of success is~$\rho^k$, where~$\rho$ is the survival probability of the herds process started from a single singleton herd. In case there is a failure, we let~$\tilde{\sigma}$ denote the first death time of one of the~$k$ lineages involved in the trial, and then we start again after~$\tilde{\sigma}$. That is, we let~$\sigma^{(1)}$ denote the first time after~$\tilde{\sigma}$ at which there are at least~$k$ singleton herds, run a new trial etc. Conditioned on~$\{(\Xi_t) \text{ survives}\}$, a success eventually occurs almost surely, and after the starting time of the successful trial, there are always at least~$k$ herds in the process. Since~$k$ is arbitrary, this completes the proof.
\end{proof}

\section{The~$h$-herds process}\label{s_hherds}
As already explained, we would like to argue that the contact process on the switching random~$d$-regular graph resembles the herds process. An intermediate step in this direction is to truncate the herds process, so that herds only occupy finite subsets of~$\mathbb{T}$, with the idea that these subsets can then be isomorphically embedded in~$G_t$. 

A first attempt for such a truncation would be to prescribe that herds can only occupy the set~$\mathcal{B}_\mathbb{T}(o,h)$ for some  large~$h$, and that when there is a splitting of an edge of this ball, each of the two resulting components are augmented so as to restore the piece that was severed, thus making them again isomorphic to the same ball. However, it turns out that this definition would not be appropriate in an important respect, which we now explain.

Part of our strategy involves arguing that, if the herds process is supercritical for some parameters~$\lambda, \mathsf{v}$, then the truncated herds process with the same parameters and sufficiently long truncation range~$h$ is also supercritical. In proving this, one is naturally led to consider the multi-type branching structure of the truncated process. In trying to argue that this branching process survives, it is useful to appeal to irreducibility-like properties, for instance that from any given herd it is possible to generate a herd of the same shape as the initial one (with a single particle at the root) within one time unit with a probability that does not depend on~$h$. This is however not satisfied with the splitting scheme described in the previous paragraph: if a herd only has particles near the leaves of~$\mathcal{B}_\mathbb{T}(o,h)$, then it is costly (in an~$h$-dependent way) to produce the initial herd again. To overcome this difficulty, we propose an alternate splitting scheme that makes this irreducibility property more attainable.

\subsection{More tree notation: splitting trees}\label{ss_splitting_trees}
Let~$A$ be a subtree of~$\mathbb{T}$ (here and in what follows, whenever we refer to a subtree of~$\mathbb{T}$, we assume that it is connected), and let~$e=\{u,v\}$ be an edge of~$A$. We will now introduce some subgraphs of~$\mathbb{T}$ that can be defined  from~$A$ and~$e$.

First, the removal of~$e$ from~$A$ breaks~$A$ into two components, one containing~$u$ and the other containing~$v$; these are denoted by~$\mathcal{L}(A,e,u)$ and~$\mathcal{L}(A,e,v)$ respectively.

Next, for each~$r \ge 1$, let~$\mathcal{L}_r(A,e,u)$ denote  the (connected) subgraph of~$\mathbb{T}$ obtained by joining
\[\mathcal{L}(A,e,u),\qquad \mathcal{L}(\mathcal{B}_\mathbb{T}(v,r-1),e,v),\qquad \text{ and the edge~$e$}.\] Equivalently,~$\mathcal{L}_r(A,e,u)$ is the subgraph of~$\mathbb{T}$ induced by the set of vertices obtained as the union of the vertices of~$\mathcal{L}(A,e,u)$ with the set of all vertices that can be reached by a self-avoiding path in~$\mathbb{T}$ that starts at~$u$, first moves to~$v$, and then moves at most~$r-1$ steps. See Figure~\ref{fig:splitting} for an illustration.

\begin{figure}[htb]
	\begin{center}
		\setlength\fboxsep{0pt}
		\setlength\fboxrule{0.8pt}
		\fbox{\includegraphics[width = \textwidth]{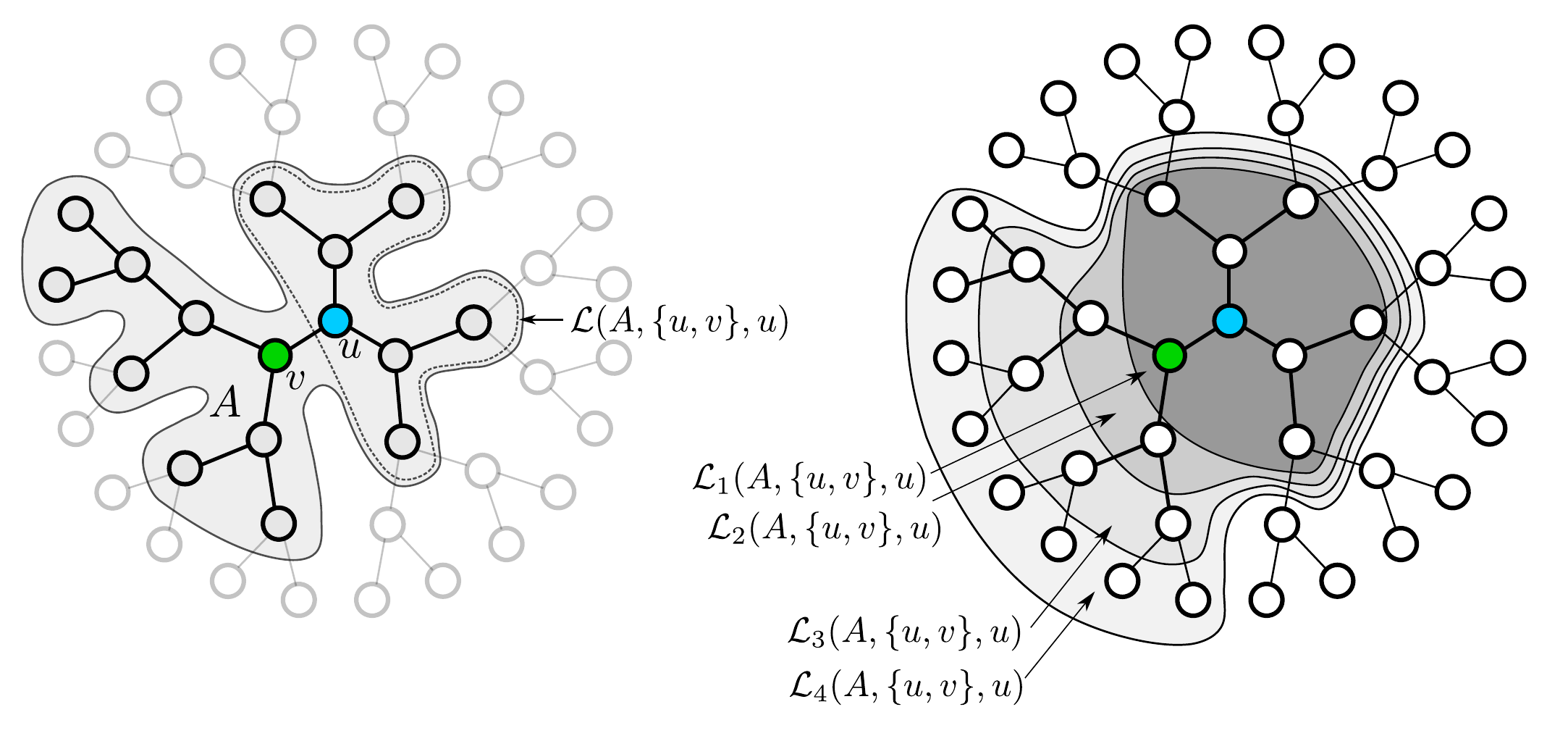}}
	\end{center}
	\caption{Illustration of the sets~$\mathcal{L}(A,\{u,v\},u)$ and~$\mathcal{L}_r(A,\{u,v\},u)$.}
	\label{fig:splitting}
\end{figure}

Now fix~$h \ge 1$ and assume that the diameter of~$A$ is at most~$2h$. Define
\begin{equation}\label{eq_def_of_r*}\begin{split}&\mathcal{T}_h(A,e,u) := \mathcal{L}_{r_*}(A,e,u),\\[.2cm]
&\hspace{1cm}\text{where }r_* := \max\{r \le h:\; \mathcal{L}_r(A,e,u) \text{ has diameter at most $2h$}\}.\end{split}\end{equation}
The production of the two trees~$\mathcal{T}_h(A,e,u)$ and~$\mathcal{T}_h(A,e,v)$ from~$A$ is called the \textit{$h$-splitting of~$A$ through~$e$}. In what follows, we will be interested in this operation, starting with the case where~$A = \mathcal{B}_\mathbb{T}(o,h)$ (which of course has diameter~$2h$).

Let~$\mathscr{A}_h$ denote the set of all subtrees of~$\mathbb{T}$ that can be reached from~$\mathcal{B}_\mathbb{T}(o,h)$ by performing successive~$h$-splittings. In other words,~$\mathscr{A}_h$ is the set of subtrees~$A$ of~$\mathbb{T}$ such that there exists a sequence~$A_0,A_1,\ldots,A_n$ of subtrees of~$\mathbb{T}$ with~$A_0 = \mathcal{B}_\mathbb{T}(o,h)$,~$A_n = A$ and so that for each~$i \in \{1,\ldots, n\}$,~$A_{i}$ is one of the two trees obtained by an~$h$-splitting of~$A_{i-1}$ through one of its edges. Note that~$\mathcal{B}_{\mathbb{T}}(o,h) \in \mathscr{A}_h$.

Let us observe that 
\begin{equation}\label{eq_deg_one_or_d} A \in \mathscr{A}_h \quad\Longrightarrow \quad A \text{ has diameter $2h$ and each vertex~$u$ of~$A$ has~$\deg_A(u) \in \{1,d\}$}.\end{equation} 
	Indeed, these properties  are satisfied by~$A = B_\mathbb{T}(o,h)$ and are preserved by the~$h$-splitting operation.

Given~$A \in \mathscr{A}$, let~$\partial A$ denote the set of leaves (vertices of degree one) of~$A$. In the proof of the  following lemma, it will be useful to note that
\begin{equation}\label{eq_dist_less_h}
	A \in \mathscr{A}, \; u \in A \quad \Longrightarrow \quad \mathrm{dist}_A(u,\partial A) \le h.
\end{equation}
This follows from noting that~$\mathcal{B}_\mathbb{T}(u,\mathrm{dist}_A(u,\partial A)) \subset A$ by~\eqref{eq_deg_one_or_d}, so
\[2\cdot \mathrm{dist}_A(u,\partial A) = \text{diameter of } \mathcal{B}_\mathbb{T}(u,\mathrm{dist}_A(u,\partial A)) \le \text{diameter of } A \le 2h.\]

We now give a lemma that will be useful in comparing the~$h$-herds process with the herds process.
\begin{lemma}\label{lem_reduce_dist}
	Let~$A \in \mathscr{A}$,~$e = \{u,v\}$ be an edge of~$A$ and~$w$ be a vertex of~$\mathcal{L}(A,e,u)$. Writing~$A':= \mathcal{T}_h(A,e,u)$, we have~$\mathrm{dist}_A(w,\partial A) \le \mathrm{dist}_{A'}(w,\partial A')$.
\end{lemma}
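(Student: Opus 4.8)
The plan is to show that the $h$-splitting operation, applied on the $u$-side, can only \emph{increase} the distance from a vertex $w \in \mathcal{L}(A,e,u)$ to the leaf set. The key observation is that $A' := \mathcal{T}_h(A,e,u)$ is obtained from $\mathcal{L}(A,e,u)$ by \emph{gluing extra material onto the cut vertex $u$}, along the edge $e$: namely $A' = \mathcal{L}(A,e,u) \cup \{e\} \cup \mathcal{L}(\mathcal{B}_\mathbb{T}(v,r_*-1),e,v)$, where $r_* = r_*(A,e,u)$ is as in~\eqref{eq_def_of_r*}. In particular $\mathcal{L}(A,e,u)$ is an induced subtree of $A'$, and every vertex of $A'$ that is \emph{not} in $\mathcal{L}(A,e,u)$ lies on the $v$-side of $e$. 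So I would first record these structural facts, which follow directly from the definitions in Section~\ref{ss_splitting_trees}.

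Next I would compare the two leaf sets. Write $L := \mathcal{L}(A,e,u)$. The leaves of $A'$ split into two groups: (i) leaves of $A'$ that lie in $L$, and (ii) leaves of $A'$ that lie on the $v$-side. For group (i): a vertex $x \in L$ that is a leaf of $A'$ must also be a leaf of $A$, because $L$ is an induced subtree of both $A$ and $A'$ and the only vertex of $L$ whose $A'$-degree could differ from its $A$-degree is $u$ — and $u$ has $A$-degree and $A'$-degree both equal to $d$ by~\eqref{eq_deg_one_or_d}, hence $u$ is not a leaf in either. (Alternatively: every vertex of $L$ other than $u$ has the same neighborhood in $A$ and in $A'$, so it is a leaf of one iff of the other; and $u$ is a leaf of neither.) Thus $\partial A' \cap L \subseteq \partial A$. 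For group (ii): any such leaf $y$ is joined to $w$ by a path in $A'$ that must pass through $u$ and then through $v$, so $\mathrm{dist}_{A'}(w,y) = \mathrm{dist}_{A'}(w,u) + \mathrm{dist}_{A'}(u,y) \ge \mathrm{dist}_{A'}(w,u) = \mathrm{dist}_{L}(w,u) = \mathrm{dist}_A(w,u) \ge \mathrm{dist}_A(w,\partial A)$, the last inequality because $A$ contains a path from $w$ to some leaf through $e$ (here one uses $\mathrm{dist}_A(w,\partial A)\le \mathrm{dist}_A(w,u) + \mathrm{dist}_A(u,\partial A)$, or more simply that $u$ has a leaf of $A$ on its far side unless $L$ already reaches a leaf — in any case $\mathrm{dist}_A(w,\partial A) \le \mathrm{dist}_A(w,u) + h$ is not quite sharp, so I would instead argue directly as below).

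Combining: for any leaf $z \in \partial A'$,
\[
\mathrm{dist}_{A'}(w,z) \;\ge\; \mathrm{dist}_A(w,\partial A),
\]
either because $z \in L \cap \partial A$ and then $\mathrm{dist}_{A'}(w,z) = \mathrm{dist}_L(w,z) = \mathrm{dist}_A(w,z) \ge \mathrm{dist}_A(w,\partial A)$ (using that the $w$--$z$ geodesic in $A'$ stays in $L$, as $z,w\in L$ and $L$ is a subtree), or because $z$ is on the $v$-side and the path through $u$ gives $\mathrm{dist}_{A'}(w,z) \ge \mathrm{dist}_L(w,u)$, while in $A$ the geodesic from $w$ to its nearest leaf either stays in $L$ — giving $\mathrm{dist}_A(w,\partial A) = \mathrm{dist}_L(w,\partial L \cap \partial A)$ which is handled by the first case — or exits $L$ through $u$, giving $\mathrm{dist}_A(w,\partial A) \ge \mathrm{dist}_A(w,u) = \mathrm{dist}_L(w,u)$. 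Taking the minimum over $z \in \partial A'$ yields $\mathrm{dist}_{A'}(w,\partial A') \ge \mathrm{dist}_A(w,\partial A)$, which is the claim.

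The main obstacle is the careful bookkeeping in the case analysis: one must be sure that the geodesic realizing $\mathrm{dist}_A(w,\partial A)$ either stays inside $L$ (so it is unaffected by the surgery) or leaves $L$ necessarily through $u$ and $e$ (so its length is at least $\mathrm{dist}_L(w,u)$, which is preserved), and correspondingly that no new ``short'' leaf is created on the $v$-side closer to $w$ than $u$ is. Both hinge on the fact that $e$ is the unique edge of $A$ (and of $A'$) connecting $L$ to the rest, i.e.\ $u$ is a cut vertex separating $w$ from everything outside $L$; once that is stated cleanly, the distance inequality is immediate from the tree structure. I would also note in passing that $A' \in \mathscr{A}_h$ and hence~\eqref{eq_deg_one_or_d} applies to it, justifying that $u$ is not a leaf of $A'$.
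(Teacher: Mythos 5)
Your structural setup is sound and close in spirit to the paper's argument: you correctly reduce the lemma to two facts, namely that the leaves of $A'$ lying in $L:=\mathcal{L}(A,e,u)$ coincide with $\partial A\cap L$, and that every new leaf of $A'$ on the $v$-side is reached from $w$ only through $u$. The problem is the final combination step. Writing $r_0:=\mathrm{dist}_A\bigl(u,\partial A\cap\mathcal{L}(A,e,v)\bigr)$ and $r_*$ as in \eqref{eq_def_of_r*}, every $v$-side leaf $z$ of $A'$ satisfies $\mathrm{dist}_{A'}(w,z)=\mathrm{dist}_A(w,u)+r_*$, and what must be shown is $\mathrm{dist}_A(w,u)+r_*\ge\mathrm{dist}_A(w,\partial A)$. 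In your sub-case ``the geodesic exits $L$ through $u$'' you only derive the two lower bounds $\mathrm{dist}_{A'}(w,z)\ge\mathrm{dist}_L(w,u)$ and $\mathrm{dist}_A(w,\partial A)\ge\mathrm{dist}_L(w,u)$; two quantities each bounded below by the same number need not compare to each other, and what is actually needed there is $r_*\ge r_0$. In the sub-case ``the geodesic stays in $L$'' you defer to ``the first case'', but the first case only treats leaves $z$ lying in $L$, whereas here $z$ is on the $v$-side and one still must check $\mathrm{dist}_A(w,u)+r_*\ge\mathrm{dist}_A(w,\partial A\cap L)$, which is not automatic.

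The missing ingredient is the quantitative bound $\min\{r_0,h\}\le r_*\le h$, which comes from the maximality in the definition \eqref{eq_def_of_r*} (any $\tilde r\le\min\{r_0,h\}$ makes $\mathcal{L}_{\tilde r}(A,e,u)$ of diameter at most $2h$, hence $\tilde r\le r_*$), used together with \eqref{eq_dist_less_h}, i.e.\ $\mathrm{dist}_A(w,\partial A)\le h$. These are precisely the facts the paper's proof isolates in its display \eqref{eq_dist_three}, and they are genuinely necessary: your closing claim that the inequality is ``immediate from the tree structure'' once $u$ is recognized as a cut vertex is false, since a splitting rule that reattached, say, only the single vertex $v$ beyond $e$ would satisfy every structural fact you invoke and yet violate the lemma whenever $w$ is far from all leaves of $L$ (take $A=\mathcal{B}_\mathbb{T}(o,h)$ and $w=u=o$, so that $\mathrm{dist}_A(w,\partial A)=h$ while the hypothetical $A'$ would have a leaf at distance $1$ from $w$). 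There is also a minor slip in asserting $\deg_A(u)=d$ (by \eqref{eq_deg_one_or_d} it may equal $1$), but this is harmless since $\deg_A(x)=\deg_{A'}(x)$ for every $x\in L$ in any case; the gap above is the real one, and closing it is essentially the entire content of the lemma.
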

\begin{proof}
Define
	\[L_u := \partial A \cap \mathcal{L}(A,e,u) = \partial A' \cap \mathcal{L}(A',e,u)\]
	and
	\[L_v:= \partial A \cap \mathcal{L}(A,e,v),\qquad L_v':= \partial A' \cap \mathcal{L}(A',e,v),\]
	so that~$L_u$,~$L_v$ partition~$\partial A$ and~$L_u$,~$L_v'$ partition~$\partial A'$.
Also let
	\[r_0:= \mathrm{dist}_A(u,L_v) \quad \text{and}\quad r_*:= \mathrm{dist}_{A'}(u,L_v');\]
	note that~$r_*$ is as in~\eqref{eq_def_of_r*}. Also using~\eqref{eq_dist_less_h}, we now have
	\begin{align}
		&h\ge \mathrm{dist}_A(w,\partial A) = \min\{\mathrm{dist}_A(w,L_u),\; \mathrm{dist}_A(w,u)+ r_0\},\label{eq_dist_one}\\
		&h \ge \mathrm{dist}_{A'}(w,\partial A') = \min\{\mathrm{dist}_A(w,L_u),\; \mathrm{dist}_A(w,u) + r_*\}.\label{eq_dist_two}
	\end{align}

	Next, note that the definition of~$h$-splitting in~\eqref{eq_def_of_r*} gives~$r_* \le h$ and moreover, if~$\tilde{r}$ is such that~$\tilde{r} \le \min\{r_0,h\}$, then~$\tilde{r} \le r_*$. Hence,
	\begin{equation}\label{eq_dist_three}
		\min\{r_0,h\} \le r_* \le h.
	\end{equation}
	The desired inequality~$\mathrm{dist}_A(w,\partial A) \le \mathrm{dist}_{A'}(w,\partial A')$ now follows from~\eqref{eq_dist_one},~\eqref{eq_dist_two} and~\eqref{eq_dist_three}, by separately considering the cases~$r_0 > h$ and~$r_0 \le h$.
\end{proof}

The following lemma shows that it is possible to re-obtain a ball of radius~$h$ in~$\mathbb{T}$ by starting from an element of~$\mathscr{A}_h$ and performing at most~$d$ successive~$h$-splittings. The proof is straightforward and left to the reader.
\begin{lemma}\label{lem_scheme}
	Let~$A \in \mathscr{A}_h$ and~$u$ be a vertex of~$A$ with~$\deg_A(u) = d$. Let~$v_1,\ldots, v_d$ be the neighbors of~$u$, in decreasing order of depth with respect to~$u$, meaning that
	\[\{1,\ldots, d\} \ni i \mapsto \max\{\mathrm{dist}_A(u,v): \; v \in \mathcal{L}(A,\{u,v_i\},v_i)\} \]
	is non-increasing.
 Then, letting~$A_0 = A$ and recursively letting
	\[ A_{i+1} = \mathcal{T}_h(A_i,\{u,v_i\},u),\qquad i \in \{0,\ldots, d-1\},\]
	we have that~$A_d = \mathcal{B}_\mathbb{T}(u,h)$.
\end{lemma}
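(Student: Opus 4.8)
For $i\in\{1,\dots,d\}$ write $d_i:=\max\{\mathrm{dist}_A(u,v):v\in\mathcal{L}(A,\{u,v_i\},v_i)\}$ for the \emph{depth}, measured from $u$, of the branch of $A$ hanging off $u$ through $v_i$; the hypothesis on the ordering of the $v_i$'s is precisely that $d_1\ge d_2\ge\cdots\ge d_d$. The first step is to show that $d_2\le h$, and hence $d_j\le h$ for every $j\ge 2$. Indeed, picking $\ell_j\in\mathcal{L}(A,\{u,v_j\},v_j)$ with $\mathrm{dist}_A(u,\ell_j)=d_j$ for $j=1,2$, the unique path in $A$ from $\ell_1$ to $\ell_2$ passes through $u$ and has length $d_1+d_2$; since $A\in\mathscr{A}_h$ has diameter $2h$ by~\eqref{eq_deg_one_or_d}, we get $2d_2\le d_1+d_2\le 2h$.

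The rest of the argument is an induction on $i\in\{0,1,\dots,d\}$ on the following invariant. For a subtree $B\ni u$ of $\mathbb{T}$ and a neighbor $w$ of $u$ in $B$, call $\mathcal{L}(B,\{u,w\},w)$ the \emph{$w$-branch of $B$}. The invariant $(\star_i)$ is: the neighbors of $u$ in $A_i$ are still $v_1,\dots,v_d$; for $j\le i$ the $v_j$-branch of $A_i$ equals the $v_j$-branch of $\mathcal{B}_\mathbb{T}(u,h)$ (equivalently, it is the full subtree of $\mathbb{T}$ rooted at $v_j$ away from $u$, truncated at distance $h-1$ from $v_j$, so it has depth exactly $h$ from $u$); and for $j>i$ the $v_j$-branch of $A_i$ equals the $v_j$-branch of $A$ (so it has depth $d_j$ from $u$). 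The case $i=0$ is immediate. Granting $(\star_d)$, the lemma follows: $A_d$ and $\mathcal{B}_\mathbb{T}(u,h)$ have the same $w$-branch for every neighbor $w$ of $u$ and both contain $u$, so $A_d=\{u\}\cup\bigcup_{j=1}^d\big(\{u,v_j\}\cup(v_j\text{-branch})\big)=\mathcal{B}_\mathbb{T}(u,h)$.

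For the inductive step, fix $i\in\{0,\dots,d-1\}$, assume $(\star_i)$, and consider $A_{i+1}=\mathcal{T}_h(A_i,\{u,v_{i+1}\},u)=\mathcal{L}_{r_*}(A_i,\{u,v_{i+1}\},u)$. By construction, for any $r$ the tree $\mathcal{L}_r(A_i,\{u,v_{i+1}\},u)$ has the same neighbors of $u$ and the same $v_j$-branches as $A_i$ for $j\ne i+1$, while its $v_{i+1}$-branch is $\mathcal{L}(\mathcal{B}_\mathbb{T}(v_{i+1},r-1),\{u,v_{i+1}\},v_{i+1})$, of depth $r$ from $u$. Take $r=h$. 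Then every branch off $u$ of $\mathcal{L}_h(A_i,\{u,v_{i+1}\},u)$ has depth at most $h$: the branches $v_1,\dots,v_i$ have depth $h$ by $(\star_i)$, the new $v_{i+1}$-branch has depth $h$, and the branches $v_{i+2},\dots,v_d$ have depths among $d_2,\dots,d_d$, all $\le h$ by the first paragraph (when $i=0$ these are exactly $d_2,\dots,d_d$). Any simple path in $\mathcal{L}_h(A_i,\{u,v_{i+1}\},u)$ either stays within a single branch off $u$ — and then its length is at most that branch's diameter, which is $\le 2(h-1)$ when the branch is a (truncated) ball around some $v_j$ and $\le\mathrm{diam}(A)=2h$ when the branch is inherited from $A$ — or it passes through $u$, and then its length is the sum of two branch depths, hence $\le 2h$. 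So $\mathcal{L}_h(A_i,\{u,v_{i+1}\},u)$ has diameter at most $2h$, i.e.\ $r=h$ lies in the set defining $r_*$ in~\eqref{eq_def_of_r*}; since trivially $r_*\le h$, we conclude $r_*=h$. Therefore $A_{i+1}=\mathcal{L}_h(A_i,\{u,v_{i+1}\},u)$ has $v_j$-branch equal to that of $\mathcal{B}_\mathbb{T}(u,h)$ for $j\le i+1$ and equal to that of $A$ for $j>i+1$; that is, $(\star_{i+1})$ holds, and in particular the set in~\eqref{eq_def_of_r*} is non-empty at every step so that $A_{i+1}$ is well defined.

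The only point that needs real care is the diameter bookkeeping in the inductive step, together with the reason the ordering $d_1\ge\cdots\ge d_d$ is imposed: the very first split ($i=0$) replaces the branch through $v_1$, whose depth $d_1$ may exceed $h$, and it is exactly because all the \emph{remaining} branches then have depth $d_2\le h$ that regrowing the $v_1$-branch up to depth $h$ keeps the diameter $\le 2h$; for later steps the already-regrown branches have depth exactly $h$ and the untouched ones still have depth $\le h$, so the same computation applies.
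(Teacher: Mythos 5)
Your proof is correct and complete. The paper itself offers no argument here (it states ``The proof is straightforward and left to the reader''), so there is nothing to compare against; your write-up supplies exactly the missing bookkeeping: the observation that the second-deepest branch already satisfies $d_2\le h$ because $d_1+d_2\le\mathrm{diam}(A)=2h$, the branch-by-branch invariant, and the verification that $r=h$ is admissible at every step so that $r_*=h$ in~\eqref{eq_def_of_r*}. One cosmetic point: the recursion as printed reads $A_{i+1}=\mathcal{T}_h(A_i,\{u,v_i\},u)$ with $i$ starting at $0$, which references an undefined $v_0$; you have (correctly) read this as $A_{i+1}=\mathcal{T}_h(A_i,\{u,v_{i+1}\},u)$, i.e.\ the deepest branch is split first, and your closing remark rightly identifies that this ordering is exactly what keeps the diameter bound from failing at the first step when $d_1>h$.
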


\subsection{Definition of the~$h$-herds process}\label{ss_def_Phi}
Define
\[\mathscr{A}_h' := \{(A,\alpha): A \in \mathscr{A}_h,\; \alpha \text{ is a subset of the set of vertices of~$A$}\}.\]
A pair~$(A,\alpha) \in \mathscr{A}_h'$ is called an~\textit{$h$-herd}. We interpret~$\alpha$ as a set of living particles and~$A$ as the region of space that these particles (and their descendants) can currently occupy. Given an~$h$-herd~$(A,\alpha)$, we say that an edge~$e= \{u,v\}$ of~$A$ is \textit{active} if~$e$ delimits two non-empty portions of~$\alpha$. We then define
\begin{align*}&\mathcal{T}_h((A,\alpha),e,u):= (\mathcal{T}_h(A,e,u),\;\alpha \cap \mathcal{L}(A,e,u)),\\
&\mathcal{T}_h((A,\alpha),e,v):= (\mathcal{T}_h(A,e,v),\;\alpha \cap \mathcal{L}(A,e,v)),\end{align*} 
and we refer to the mapping of~$(A,\alpha)$ into the two~$h$-herds above as an \textit{$h$-splitting} (or just \textit{splitting}, when~$h$ is clear from the context) of~$(A,\alpha)$ through~$e$ (which needs to be an active edge, a requirement that depends on~$\alpha$).

We will now define the \textit{$h$-herds process}
\[\Phi_t = (\mathcal{J}_t,\; \{(A^i_t,\alpha^i_t): i \in \mathcal{J}_t\}),\quad t \ge 0,\]
where for each~$t$,~$\mathcal{J}_t \subset \mathbb{N}$ is a finite set of indices and the pairs~$(A^i_t,\alpha^i_t)$ are~$h$-herds. This process will be similar to the herds process, with the sets~$\alpha^i_t$ here playing the roles of the sets~$\eta^i_t$ there, with two important differences. First, given~$i \in \mathcal{J}_t$, the particles corresponding to points of~$\alpha^i_t$ are only allowed to give birth on vertices of~$A^i_t$. Second, the splitting of~$h$-herds occurs according to the procedure described above, replacing a pair~$(A^i_t,\alpha^i_t)$ by the two~$h$-herds~$\mathcal{T}_h((A^i_t,\alpha^i_t),e,u),\mathcal{T}_h((A^i_t,\alpha^i_t),e,v)$. 

Let us now give a formal definition of the process. Again, it suffices to describe the jumps and rates from a state~$\Phi = (\mathcal{J},\{(A^i,\alpha^i):i \in \mathcal{J}\})$. These are:
\begin{itemize}
	\item contact birth -- for each~$i \in \mathcal{J}$,~$u \in \alpha^i$ and~$v \in A^i$,~$v \sim u$, with rate~$\lambda$:  the process jumps from~$\Phi$ to the state in which the index set~$\mathcal{J}$ is  the same as in~$\Phi$ and the~$h$-herds with index~$j \neq i$ are  the same as in~$\Phi$, while the~$h$-herd~$(A^i,\alpha^i)$ is replaced by~$(A^i,\alpha^i \cup \{v\})$;
	\item contact death, with removal of empty herds -- for each~$i \in \mathcal{J}$ and~$u\in\alpha^i$, with rate one: the process jumps from~$\Phi$ to the state~$\Phi'$ defined as follows. In case~$\alpha^i = \{u\}$, then herd~$i$ is simply deleted: the index set of~$\Phi'$ is~$\mathcal{J}\backslash\{i\}$, and all other herds are left unchanged. In case~$\alpha^i \neq \{u\}$, then~$\Phi'$ has the same index set~$\mathcal{J}$ as~$\Phi$; the herds with index~$j \neq i$ are kept the same as in~$\Phi$, while herd~$\alpha^i$ is replaced by~$\alpha^i\backslash \{u\}$;	
	\item $h$-herd splitting -- for each~$i \in \mathcal{J}$ and each edge~$e=\{u,v\}$ of~$A^i$ that is active (according to~$\alpha^i$), with rate~$\mathsf{v}$: the process jumps from~$\Phi$ to the state~$\Phi'$ defined as follows. The index set of~$\Phi'$ is~$(\mathcal{J}\backslash \{i\})\cup\{i_1,i_2\}$, where~$i_1,i_2$ are arbitrary natural numbers not belonging to~$\mathcal{J}$. All herds~$(A^j,\alpha^j)$ with~$j \in \mathcal{J} \backslash  \{ i\}$ are unchanged, while
		\[(A^{i_1},\alpha^{i_1}) = \mathcal{T}_h((A^i,\alpha^i),e,u)\qquad \text{and}\qquad (A^{i_2},\alpha^{i_2}) = \mathcal{T}_h((A^i,\alpha^i),e,v).\] 
\end{itemize}

As was the case for~$(\Xi_t)$, the set of~$h$-herds of~$(\Phi_t)$ is always assumed to be finite at~$t=0$, and almost surely it remains finite for all times. Unless explicitly stated otherwise, we assume that~$\Phi_0$ consists of a single~$h$-herd~$(\mathcal{B}_\mathbb{T}(o,h),\{o\})$.

The following lemma will be obtained as a consequence of Lemma~\ref{lem_scheme}.

\begin{lemma}[Uniform irreducibility bound]
	\label{lem_regardless_of}
	For any~$\lambda>0$ and~$\mathsf{v} > 0$, there exist~$p_0 > 0$ and~$t_0>0$ such that the following holds for any~$h \in \mathbb{N}$. Assume that the~$h$-herds process~$(\Phi_t)$ is started from an arbitrary non-empty state. Then, with probability at least~$p_0$, at time~$t_0$ there exists some~$i \in \mathcal{J}_{t_0}$ such that~$(A^i_{t_0},\alpha^i_{t_0}) = (\mathcal{B}_\mathbb{T}(u,h),\{u\})$ for some~$u \in \mathbb{T}$.
\end{lemma}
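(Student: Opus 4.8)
\emph{Proof plan.} I would run two phases, each lasting a fixed, $h$-independent amount of time and each succeeding with an $h$-independent positive probability, whatever the initial configuration is: in the first phase I drive some herd down to a single particle, and in the second phase I let that particle grow back a ball of radius $h$, using Lemma~\ref{lem_scheme}. Accordingly I set $t_0:=T_A+T_B$ for constants $T_A,T_B$ fixed below.

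\emph{Phase A: down to a singleton.} From the given non-empty state, pick a herd $(A,\alpha)$ and a particle $w\in\alpha$; by~\eqref{eq_deg_one_or_d}, $w$ has $m:=\deg_A(w)\in\{1,d\}$ neighbours $x_1,\dots,x_m$ in the tree of its herd, and I follow the herd containing $w$ (recall a split produces two herds — I always track the one containing $w$). The key point is that splitting through the edge $\{w,x_i\}$ and keeping $w$'s component deletes from $w$'s herd \emph{every} particle on the $x_i$-side of $w$, since that component's particle set is $\alpha\cap\mathcal L(\,\cdot\,,\{w,x_i\},w)$; as removing $w$ partitions the rest of the herd into the branches hanging from $x_1,\dots,x_m$, once all the (at most $d$) edges at $w$ have been cut the herd of $w$ is $\{w\}$ alone — so only $\le d$ splittings are needed, a number independent of $h$. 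For the probability bound, let $Q$ be the event that $w$ neither dies nor gives birth in $[0,T_A]$; on $Q$ every branch of $w$'s herd evolves autonomously (it gets no particle from $w$'s side), driven by clocks independent of the ones of $w$, and the edge $\{w,x_i\}$ is cut at the fixed rate $\mathsf v$ for as long as $w$ is alive and that branch is non-empty. Hence, writing $\tau$ for the first time every branch of $w$'s herd has been cut or has emptied out, I get $\mathbb P(\tau>T_A\mid Q)\le d\,e^{-\mathsf v T_A}$; choosing $T_A$ large enough that this is at most $1/2$,
\[
\mathbb P\bigl(Q\cap\{\tau\le T_A\}\bigr)=\mathbb P(Q)\,\mathbb P(\tau\le T_A\mid Q)\ \ge\ \tfrac12\,\mathbb P(Q)\ \ge\ \tfrac12\,e^{-(1+\lambda d)T_A}\ =:\ p_A>0.
\]
On this event, at time $\tau$ the herd of $w$ equals $(\widetilde A,\{w\})$ for some $\widetilde A\in\mathscr A_h$, and since $w$ then remains quiet and alone (a one-particle herd has no active edge) the herd is still $(\widetilde A,\{w\})$ at time $T_A$.

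\emph{Phase B: grow a ball.} Starting from a one-particle herd $(\widetilde A,\{w\})\in\mathscr A_h'$: if $w$ is a leaf of $\widetilde A$, its unique neighbour $w'$ has degree $d$ (as $\widetilde A$ has diameter $2h\ge2$), so I let $w$ give birth onto $w'$ and then die, reducing to a single particle at a degree-$d$ vertex $u$. I then list the neighbours $v_1,\dots,v_d$ of $u$ in decreasing order of depth as in Lemma~\ref{lem_scheme}, and ask for the ordered sequence of events — within a window of fixed length $T_B$, with nothing else occurring in this herd — in which $u$ gives birth onto $v_1,\dots,v_d$ one after another and then the herd is split through $\{u,v_1\},\dots,\{u,v_d\}$ one after another, each time keeping $u$'s component. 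During this window the herd has at most $d+1$ particles and at most $d$ active edges, so all rates are bounded by a constant depending only on $d,\lambda,\mathsf v$, and hence this explicit finite sequence occurs with probability at least some $p_B>0$ (depending only on $d,\lambda,\mathsf v,T_B$). By Lemma~\ref{lem_scheme}, after the $d$ splits the herd equals $(\mathcal B_\mathbb T(u,h),\{u\})$, and ``nothing else occurring'' keeps it so until time $T_B$.

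\emph{Combination, and the main difficulty.} Conditioning on the configuration at time $T_A$ and using that distinct herds evolve independently, I run Phase B on the one-particle herd produced by Phase A, and the Markov property gives that with probability at least $p_0:=p_Ap_B$ there is, at time $t_0=T_A+T_B$, a herd equal to $(\mathcal B_\mathbb T(u,h),\{u\})$ for some $u\in\mathbb T$ — with $p_0$ and $t_0$ depending only on $d,\lambda,\mathsf v$. I expect Phase A to be the genuine obstacle: since the initial state may carry herds whose particle count grows with $h$, killing off surplus particles one at a time would take an $h$-dependent time, and the argument hinges on the fact that at most $d$ branch-deleting splittings (each with the fixed rate $\mathsf v$, each governed by clocks independent of those keeping $w$ alive) bring a herd to a singleton in $h$-independent time with $h$-independent probability. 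Making the final independence claim watertight — bookkeeping exactly which Poisson clocks drive which transitions in a graphical construction of the process — is where the care will be needed.
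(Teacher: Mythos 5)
Your proposal is correct and follows essentially the same route as the paper's proof: reduce to a single-particle herd, move that particle to a degree-$d$ vertex, and then rebuild $\mathcal{B}_\mathbb{T}(u,h)$ via $d$ birth-then-split steps justified by Lemma~\ref{lem_scheme}, all within an $h$-independent time and with $h$-independent probability. The only divergence is in your Phase A: the paper notes that any non-unitary herd has an active edge (one isolating an extremal particle) whose single split already produces a singleton herd, whereas you prune the at most $d$ branches around a fixed surviving particle $w$ — both give uniform bounds, and the clock-independence bookkeeping you flag is the standard graphical-construction argument, handled no more formally in the paper itself.
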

We postpone the proof of Lemma~\ref{lem_regardless_of} to Section~\ref{ss_proof_regardless}.  For now, let us see an important application. Say that~$(\Phi_t)$ survives if the event~$\{\mathcal{J}_t \neq \varnothing \text{ for all } t\}$ occurs.

\begin{lemma}
	\label{lem_regardless_of0}
	For any~$\lambda > 0$,~$\mathsf{v} > 0$ and~$h \in \mathbb{N}$, the 
	probability of survival of~$(\Phi_t)$ is either zero for any  initial state, or it is positive for any non-empty initial state. \end{lemma}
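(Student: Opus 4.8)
The plan is to derive Lemma~\ref{lem_regardless_of0} as a direct consequence of the uniform irreducibility bound of Lemma~\ref{lem_regardless_of}, combined with an elementary irreducibility argument for the single-$h$-herd dynamics. The dichotomy to be proved is: either survival has probability zero from every initial state, or it has positive probability from every non-empty initial state. It suffices to show the contrapositive of one direction: if survival has positive probability from \emph{some} non-empty state, then it has positive probability from \emph{every} non-empty state.

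So suppose $\mathbb{P}(\Phi \text{ survives} \mid \Phi_0 = \Psi) > 0$ for some non-empty state $\Psi$. First I would reduce to a canonical starting configuration. By Lemma~\ref{lem_regardless_of}, from \emph{any} non-empty state, with probability at least $p_0 > 0$ there is at time $t_0$ some index $i$ with $(A^i_{t_0}, \alpha^i_{t_0}) = (\mathcal{B}_\mathbb{T}(u,h), \{u\})$ for some $u \in \mathbb{T}$. By translation invariance of the $h$-herds dynamics (the construction of $\mathcal{T}_h$ and all jump rates are invariant under graph automorphisms of $\mathbb{T}$), the law of the process started from the single $h$-herd $(\mathcal{B}_\mathbb{T}(u,h),\{u\})$ does not depend on $u$; call its survival probability $\rho_h \ge 0$. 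Since extra herds can only help survival (one can couple so that ignoring all herds other than the one isolated at time $t_0$ gives a lower bound), running the process from $\Psi$ and using the strong Markov property at time $t_0$ yields
\begin{equation*}
0 < \mathbb{P}(\Phi \text{ survives} \mid \Phi_0 = \Psi) \le \mathbb{P}(\text{at }t_0\text{ some }h\text{-herd equals }(\mathcal{B}_\mathbb{T}(u,h),\{u\})) + \text{(stuff)},
\end{equation*}
but more cleanly: conditioning on survival forces, with the complementary probability too, that there is positive probability of reaching a state containing an isolated ball herd \emph{and} surviving thereafter, which forces $\rho_h > 0$. I would phrase this as: $0 < \mathbb{P}_\Psi(\text{survives}) \le \mathbb{P}_\Psi(\text{survives and at time }t_0\text{ there is an isolated ball herd}) + \mathbb{P}_\Psi(\text{survives and at time }t_0\text{ there is no isolated ball herd})$; iterating Lemma~\ref{lem_regardless_of} over time windows $[kt_0,(k+1)t_0]$ using the Borel--Cantelli-type observation that on the survival event one eventually sees an isolated ball herd (with the process surviving after it), we conclude $\rho_h > 0$.

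Conversely, given $\rho_h > 0$, I would show survival has positive probability from every non-empty state $\Psi$. Apply Lemma~\ref{lem_regardless_of} once: from $\Psi$, with probability at least $p_0$, at time $t_0$ some $h$-herd equals $(\mathcal{B}_\mathbb{T}(u,h),\{u\})$. By the strong Markov property at $t_0$, by translation invariance, and by the coupling that ignores the other herds (which only decreases the survival probability), the process survives with probability at least $p_0 \cdot \rho_h > 0$. This establishes that positive survival probability from one non-empty state implies positive survival probability from all non-empty states, which together with its contrapositive gives the stated dichotomy.

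The main obstacle — really the only delicate point — is making rigorous the iteration argument that passes from ``survival has positive probability from $\Psi$'' to ``$\rho_h > 0$'': one needs to know that on the survival event, isolated ball herds appear infinitely often (or at least once, jointly with continued survival). This follows by applying Lemma~\ref{lem_regardless_of} to the successive time windows $[kt_0,(k+1)t_0]$ and using the conditional Borel--Cantelli lemma together with the Markov property, exactly as in the proof of Lemma~\ref{lem_Xi_large} (which handles the analogous point for the herds process). Since these are standard Markov-chain manipulations, I would present the proof in sketch form, citing Lemma~\ref{lem_regardless_of} and the translation invariance of the dynamics as the substantive inputs, and noting the parallel with Lemma~\ref{lem_for_herds}.
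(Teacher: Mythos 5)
Your second step (the direction: $\rho_h>0$ implies positive survival probability from every non-empty state, via one application of Lemma~\ref{lem_regardless_of}, translation invariance, and the fact that discarding all herds but one can only decrease the survival probability because distinct herds evolve independently) is correct, and it is essentially all that the paper's own one-line proof spells out. The problem is in your first step. The inference ``on the survival event one eventually sees an isolated ball herd, with the process surviving after it, hence $\rho_h>0$'' is a non sequitur: the survival of the process after the time at which a ball herd is present may be carried entirely by the lineages of the \emph{other} herds present at that time, so it tells you nothing about the survival probability of the ball herd's own lineage. The conditional Borel--Cantelli argument does show that, on the survival event, ball herds are present at infinitely many of the times $kt_0$; but this is perfectly compatible with $\rho_h=0$ (a countable union of null events is null, so almost surely \emph{none} of these ball lineages survives, while the process lives on through herds that never pass through the ball type). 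Abstractly, for a multi-type branching process in which every type can reach a distinguished type $b$, it can still happen that $b$ dies out almost surely while other types survive: take a type $a$ that branches supercritically into copies of itself and occasionally emits a $b$, where $b$ simply dies at rate one. The appeal to Lemma~\ref{lem_for_herds} does not repair this, since that proof already presupposes that the singleton survival probability $\rho$ is positive.

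What is missing is reachability in the \emph{other} direction: from the ball herd $(\mathcal{B}_\mathbb{T}(o,h),\{o\})$ one can produce, with positive probability, a descendant of any prescribed type $[(A,\alpha)]\in[\mathscr{A}_h']$. This is essentially immediate from the definitions --- $\mathscr{A}_h$ consists precisely of the trees obtainable from $\mathcal{B}_\mathbb{T}(o,h)$ by successive $h$-splittings, each required splitting can be activated by first populating both sides of the relevant edge via contact births, and any target particle set $\alpha$ is then reached by finitely many births and deaths, all with positive probability --- and it is the full irreducibility of $([\Phi_t])$ that the paper records in Section~\ref{ss_eigen}. With it, your first step closes: if some non-empty state survives with positive probability then, since its finitely many herds have independent lineages, some single type $T_0$ has positive survival probability $\rho_{T_0}$, and then $\rho_h\ge p\cdot\rho_{T_0}>0$, where $p>0$ is the probability that the ball herd produces a type-$T_0$ descendant. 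I would state this reverse reachability explicitly rather than rely on Lemma~\ref{lem_regardless_of} and a Borel--Cantelli iteration alone.
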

\begin{proof}
	It suffices to apply Lemma~\ref{lem_regardless_of} together with the observation that the~$h$-herds process started from a single~$h$-herd~$(\mathcal{B}_\mathbb{T}(u,h),\{u\})$ has the same distribution, modulo applying a tree translation to all~$h$-herds, as the~$h$-herds process started from a single~$h$-herd~$(\mathcal{B}_\mathbb{T}(o,h),\{o\})$. 
\end{proof}

Next we have the following result, showing that survival of the herds process implies survival of the~$h$-herds process with sufficiently large~$h$. Recall the definition of~$\bar{\lambda}(\mathsf{v})$ from~\eqref{eq_def_of_bar_lambda}.
\begin{lemma}[Truncation]\label{lem_manyo}
	Let~$\mathsf{v} > 0$ and~$\lambda > \bar{\lambda}(\mathsf{v})$. Then, for any~$k > 0$ there exist~$h_1 \in\mathbb{N}$ and~$t_1 > 0$ such that the  following holds for any~$h \ge h_1$. The~$h$-herds process~$(\Phi_t)$ with parameters~$\lambda,\mathsf{v},h$ started from a single~$h$-herd~$(\mathcal{B}_\mathbb{T}(o,h),\{o\})$ satisfies
	\[\mathbb{P}(|\mathcal{J}_{t_1}| \ge k) > \rho/2,\]
	where~$\rho$ is the survival probability of the herds  process~$(\Xi_t)$ with parameters~$\lambda,\mathsf{v}$ started from a single herd~$\{o\}$.
\end{lemma}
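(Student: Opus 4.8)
The plan is to couple the $h$-herds process with the (untruncated) herds process so that, with high probability, the two agree up to a fixed time $t_1$ at which the herds process already has at least $k$ herds; the coupling then transfers this to the $h$-herds process. To set the parameters: since $\lambda>\bar\lambda(\mathsf v)$, the herds process $(\Xi_t)$ started from $\{o\}$ survives with probability $\rho>0$. On survival $|\mathcal J_t|\to\infty$ by Lemma~\ref{lem_for_herds}, while on non-survival $|\mathcal J_t|=0$ eventually; hence $\mathbf 1\{|\mathcal J_t|\ge k\}\to\mathbf 1\{(\Xi_t)\text{ survives}\}$ almost surely, and bounded convergence gives $\mathbb P(|\mathcal J_t|\ge k)\to\rho$. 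I would fix $t_1$ with $\mathbb P(|\mathcal J_{t_1}|\ge k)>\tfrac{3\rho}{4}$. By non-explosiveness (Section~\ref{sec_basic}, observation~(a)), the number $B$ of contact births performed by $(\Xi_t)$ in $[0,t_1]$ is almost surely finite, so I may fix $m\in\mathbb N$ with $\mathbb P(B>m)<\tfrac{\rho}{4}$, and set $h_1:=m+1$.

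The heart of the argument is a deterministic estimate for the $h$-herds process. For a particle $p$ lying in herd $i$ at time $t$, set $D_t(p):=\mathrm{dist}_{A^i_t}(p,\partial A^i_t)$. At a contact birth the child $v$ of a parent $u$ satisfies $D(v)\ge D(u)-1$ by the triangle inequality; contact deaths leave all regions unchanged; and at an $h$-splitting, Lemma~\ref{lem_reduce_dist} (applied on whichever side $p$ lands) shows that $D$ does not decrease for a surviving particle. Tracing the ancestral chain of $p$ back to the initial particle at $o$, for which $D_0=\mathrm{dist}_{\mathcal B_\mathbb T(o,h)}(o,\partial\mathcal B_\mathbb T(o,h))=h$, each birth along the chain lowers $D$ by at most one. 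Hence, if $h\ge m+1$ and at most $m$ contact births have occurred by time $t$, every particle alive at time $t$ has $D_t(p)\ge h-m\ge 1$, so by~\eqref{eq_deg_one_or_d} it has degree $d$ in its region and therefore all of its $\mathbb T$-neighbours lie in that region.

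Next I would build both processes on a common probability space from shared clocks: rate-$\lambda$ birth clocks on ordered pairs of adjacent vertices of $\mathbb T$, rate-$1$ death clocks on vertices, and rate-$\mathsf v$ split clocks indexed by pairs (herd index, edge), using the same fresh indices at every split, and starting both processes from a single particle at $o$. Let $\sigma$ be the first time a birth clock rings whose target vertex lies outside the region of the $h$-herd containing the parent particle. Strictly before $\sigma$ the two processes are identical: the particle sets agree, hence so do the active edges (an edge separates $\alpha^i$ inside $A^i$ iff it separates $\eta^i$ in $\mathbb T$, since the connected set $A^i$ contains $\alpha^i$) and the action of any split on those sets; in particular no birth is blocked and the number of births performed agrees. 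On $\{B\le m\}$ this forces at most $m$ births before time $\sigma\wedge t_1$ in the $h$-herds process, so by the deterministic estimate no blocked birth can occur up to $t_1$, i.e.\ $\sigma>t_1$. Thus, on $\{B\le m\}$, the two processes coincide on $[0,t_1]$; in particular $|\mathcal J^{\Phi}_{t_1}|=|\mathcal J^{\Xi}_{t_1}|$, and for every $h\ge h_1$,
\[
\mathbb P(|\mathcal J^{\Phi}_{t_1}|\ge k)\ \ge\ \mathbb P(|\mathcal J^{\Xi}_{t_1}|\ge k)-\mathbb P(B>m)\ >\ \tfrac{3\rho}{4}-\tfrac{\rho}{4}\ =\ \tfrac{\rho}{2}.
\]

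I expect the main obstacle to be the careful bookkeeping in the coupling — checking that, until a birth is blocked, the active-edge sets and the effect of $h$-splittings on particle sets are literally identical in the two processes, so that the genealogies (and hence $|\mathcal J_t|$) can be matched — together with the monotonicity of $D$ under $h$-splittings, which is precisely the content of Lemma~\ref{lem_reduce_dist} and is what makes the truncation ``invisible'' on the event $\{B\le m\}$.
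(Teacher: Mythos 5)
Your proof is correct and follows essentially the same route as the paper: a coupling under which the herds process and the $h$-herds process coincide until a particle would reach the boundary of its region, combined with Lemma~\ref{lem_for_herds} to fix $t_1$ and the monotonicity of the distance-to-boundary under births, deaths and $h$-splittings (Lemma~\ref{lem_reduce_dist}) to show this cannot happen before $h$ births have occurred. The only cosmetic difference is that the paper (in its Lemma~\ref{lem_leaf}) bounds the number of births of the $h$-herds process directly by stochastic domination with a pure-birth process, while you bound the births of the untruncated process and transfer the bound through the coupling; both yield the same $h$-independent control.
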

We postpone the proof of this lemma to Section~\ref{ss_proof_manyo}. 

Define~$\bar{\lambda}(\mathsf{v},h)$ as the infimum of the values of~$\lambda$ for which the~$h$-herds process with split rate~$\mathsf{v}$ and birth rate~$\lambda$ survives with positive probability. 
\begin{proposition}\label{prop_trunc}
	For any~$\mathsf{v} > 0$ we have~${\displaystyle \limsup_{h \to \infty} \bar{\lambda}(\mathsf{v},h) \le \bar{\lambda}(\mathsf{v}) }$.
\end{proposition}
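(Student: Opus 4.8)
The plan is to derive Proposition~\ref{prop_trunc} from the Truncation Lemma~\ref{lem_manyo} and the Uniform irreducibility bound of Lemma~\ref{lem_regardless_of}, by embedding a supercritical Galton--Watson process inside the $h$-herds process for all large $h$. It suffices to prove that for every $\lambda > \bar{\lambda}(\mathsf{v})$ there is $h_1 = h_1(\lambda,\mathsf{v})$ such that, for all $h \ge h_1$, the $h$-herds process with parameters $\lambda,\mathsf{v},h$ survives with positive probability: this gives $\bar{\lambda}(\mathsf{v},h) \le \lambda$ for $h \ge h_1$, hence $\limsup_{h\to\infty}\bar{\lambda}(\mathsf{v},h)\le\lambda$, and letting $\lambda$ decrease to $\bar{\lambda}(\mathsf{v})$ finishes the proof.

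So I would fix $\lambda > \bar{\lambda}(\mathsf{v})$ and let $\rho>0$ be the survival probability of the herds process $(\Xi_t)$ started from the single herd $\{o\}$, which is positive by~\eqref{eq_def_of_bar_lambda}. Let $p_0>0$ and $t_0>0$ be the constants from Lemma~\ref{lem_regardless_of}; the essential point is that they do not depend on $h$. Then I would fix, once and for all, an integer $k$ with $\tfrac{\rho}{2}\,k\,p_0>1$, and let $h_1$ and $t_1$ be the constants provided by Lemma~\ref{lem_manyo} for this $k$. Put $T:=t_0+t_1$, and fix any $h\ge h_1$ from now on.

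Call an $h$-herd \emph{good} if it equals $(\mathcal{B}_\mathbb{T}(u,h),\{u\})$ for some $u\in\mathbb{T}$; by equivariance of the $h$-herds dynamics under automorphisms of $\mathbb{T}$, the process started from any good $h$-herd is a translate of the one started from $(\mathcal{B}_\mathbb{T}(o,h),\{o\})$. I would then build a Galton--Watson process whose $n$-th generation is a set of good $h$-herds present in $\Phi_{nT}$: generation $0$ is the single good herd $(\mathcal{B}_\mathbb{T}(o,h),\{o\})$ from which $(\Phi_t)$ starts, and given a good herd in generation $n$, its offspring are produced over $[nT,(n+1)T]$ as follows. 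The sub-process of its genealogical descendants evolves as an independent $h$-herds process started from it; by Lemma~\ref{lem_manyo} (and translation invariance), with probability at least $\rho/2$ this sub-process has at least $k$ herds at time $nT+t_1$, in which case I select $k$ of them. On the remaining duration $t_0$, these $k$ herds and their descendants evolve independently, and by Lemma~\ref{lem_regardless_of} each independently has probability at least $p_0$ of containing a good herd among its descendants at time $(n+1)T$; the offspring of the generation-$n$ good herd are declared to be one such good descendant herd per successful trial (and none if the ``at least $k$ herds'' event fails). Thus the offspring count stochastically dominates $\mathrm{Binomial}(k,p_0)$ on an event of probability at least $\rho/2$, so it has mean at least $\tfrac{\rho}{2}\,k\,p_0>1$; and distinct generation-$n$ good herds use disjoint portions of the genealogy, so their offspring counts are independent, and by translation invariance identically distributed across the whole tree. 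Hence a genuine supercritical Galton--Watson process is embedded in $(\Phi_t)$, so it survives with positive probability, and on its survival event $\mathcal{J}_t\neq\varnothing$ for all $t$; this gives the desired positive survival probability for $(\Phi_t)$.

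The whole analytic difficulty sits in Lemmas~\ref{lem_manyo} and~\ref{lem_regardless_of}; the step above is a routine branching comparison. The two things to be careful about are (i) that $p_0$ and $t_0$ in Lemma~\ref{lem_regardless_of} are uniform in $h$, which is precisely what allows the integer $k$ to be fixed before $h$ is chosen, so that the offspring mean exceeds $1$ for all large $h$ at once; and (ii) the verification that the offspring counts of distinct good herds are genuinely independent and dominate the stated binomial, which uses that distinct herds of the $h$-herds process evolve independently and that a good herd restarts a fresh (translated) copy of the process. I do not expect any obstacle beyond this bookkeeping.
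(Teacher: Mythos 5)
Your proposal is correct and follows essentially the same route as the paper: fix $k$ with $\tfrac{\rho}{2}kp_0>1$ using the $h$-uniform constants of Lemma~\ref{lem_regardless_of}, apply Lemma~\ref{lem_manyo} to produce $k$ herds with probability at least $\rho/2$, then Lemma~\ref{lem_regardless_of} and translation invariance to regenerate good herds, and conclude via a supercritical branching comparison over time steps of length $t_0+t_1$. The paper phrases this as the conditional expectation bound $\mathbb{E}[N_{t+t_0+t_1}\mid\Phi_t]\ge 2N_t$ rather than an explicit Galton--Watson embedding, but the argument is the same.
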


\begin{remark}
	 Although we will not need this, it is worth mentioning that a simple coupling shows that~$\bar{\lambda}(\mathsf{v},h) \ge \bar{\lambda}(\mathsf{v})$ for all~$h \ge 0$. Together with the above proposition, this shows that~${\displaystyle \lim_{h \to \infty}\bar{\lambda}(\mathsf{v},h) = \bar{\lambda}(\mathsf{v})}$.
\end{remark}
\begin{proof}[Proof of Proposition~\ref{prop_trunc}]
	Fix~$\mathsf{v} > 0$ and~$\lambda > \bar{\lambda}(\mathsf{v})$. We will show that if~$h$ is large enough, then the~$h$-herds process with parameters~$\mathsf{v}$,~$\lambda$,~$h$ survives with positive probability.

	Let~$p_0$ and~$t_0$ be as in Lemma~$\ref{lem_regardless_of}$. Let~$k = \frac{4}{p_0\cdot \rho}$, where~$\rho$ is the survival probability of the herds  process~$(\Xi_t)$ with parameters~$\lambda,\mathsf{v}$ and started from a single herd~$\{o\}$. Choose~$t_1$ and~$h_1$ corresponding to~$k$ in Lemma~\ref{lem_manyo}.

	Now assume that~$h \ge h_1$. For the~$h$-herds process~$(\Phi_t)$ with parameters~$\mathsf{v}$,~$\lambda$,~$h$, for each~$t \ge 0$ let~$N_t$ denote the number of~$h$-herds at time~$t$ such that there exists~$u \in \mathbb{T}$ such that the~$h$-herd is of the form~$(\mathcal{B}_\mathbb{T}(u,h),\{u\})$. Using Lemmas~\ref{lem_regardless_of} and~\ref{lem_manyo} and the Markov property, we have that
	\[\mathbb{E}[N_{t+t_0+t_1}\mid \Phi_t] \ge N_t \cdot k \cdot \frac{\rho}{2} \cdot p_0 \ge 2N_t.\]  
	Indeed, each~$h$-herd of the form~$(\mathcal{B}_\mathbb{T}(u,h),\{u\})$ at time~$t$ has probability at least~$\rho/2$ of producing a lineage with at least~$k$~$h$-herds by time~$t+t_1$, and each of these~$h$-herds has probability at least~$p_0$ of producing an~$h$-herd of the form~$(\mathcal{B}_\mathbb{T}(u,h),\{u\})$ by time~$t+t_0+t_1$.

 This shows that the process~$\{N_{m\cdot (t_0+t_1)}: m \in \mathbb{N}_0\}$ is a supercritical branching process, so it survives with positive probability, which also implies that~$(\Phi_t)$ survives with positive probability.
	
\end{proof}

\subsection{Truncation: proof of Lemma~\ref{lem_manyo}} \label{ss_proof_manyo}

For the~$h$-herds process~$(\Phi_t)$, let~$\tau_\mathrm{leaf}$ be the first time at which there exists a herd with a particle at a leaf location, that is,
\begin{equation}\label{eq_def_leaf}\tau_\mathrm{leaf}:=\inf\left\{t\ge 0:\; \alpha^i_t \cap \partial A^i_t \neq \varnothing \text{ for some $i \in \mathcal{J}_t$}\right\}.\end{equation}

	\begin{lemma}\label{lem_leaf}
		Let~$\mathsf{v} > 0$,~$\lambda >0$,~$s > 0$, and~$\varepsilon > 0$. There exists~$h_2 \in \mathbb{N}$ such that, if~$h \ge h_2$ and~$(\Phi_t)$ is the~$h$-herds process with parameters~$\mathsf{v},\lambda,h$ started from a single herd~$(\mathcal{B}_\mathbb{T}(o,h),\{o\})$, then~$\tau_\mathrm{leaf} > s$ with probability larger than~$1-\varepsilon$.
\end{lemma}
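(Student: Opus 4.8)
\emph{Proof strategy.} The plan is to control the genealogy of particles and to observe that a particle occupying a leaf of its herd's tree must be ``deep'' in this genealogy — of generation at least $h$ — and then to bound, by a first‑moment estimate, the probability that the genealogical tree reaches depth $h$ by time $s$. Since each particle has at most $d$ neighbours and births occur at rate $\lambda$ along each edge of the herd's tree, the genealogical tree is dominated by a Yule tree of branching rate $d\lambda$, whose depth grows slowly; this forces $h_2$ to depend only on $d,\lambda,s,\varepsilon$, which is exactly what the statement asks for.

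First I would set up the genealogy: the root is the initial particle at $o$, and at each contact‑birth event creating a particle at $v$ from a particle at $u$ the new particle is declared a child of the old one, the \emph{generation} of a particle being its depth in this rooted tree. Splittings only redistribute particles among $h$‑herds and contact deaths only remove particles, so neither alters the genealogy. For a particle $p$ write $v_p$ for its (time‑independent) location, $t_p$ for its birth time, and for $t \ge t_p$ let $A_p(t)$ be the tree of the $h$‑herd containing $p$ at time $t$. The heart of the argument is the claim
\[
\mathrm{dist}_{A_p(t)}\big(v_p,\partial A_p(t)\big) \;\ge\; h - \mathrm{gen}(p), \qquad t \ge t_p,
\]
which I would prove by induction on $\mathrm{gen}(p)$. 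For the root it reads $\mathrm{dist}_{\mathcal{B}_\mathbb{T}(o,h)}(o,\partial \mathcal{B}_\mathbb{T}(o,h)) = h$. For the step, note that on $[t_p,t]$ the tree $A_p(\cdot)$ changes only at the splittings affecting $p$'s herd (contact births and deaths leave the herd tree untouched), and each such splitting replaces the current tree $A$ by $\mathcal{T}_h(A,e,\cdot)$ with $v_p$ on the retained side, so by Lemma~\ref{lem_reduce_dist} the quantity $\mathrm{dist}_{A_p(\cdot)}(v_p,\partial A_p(\cdot))$ is nondecreasing in time. Applying this to the parent $q$ of $p$ on $[t_q,t_p]$, and using $v_p \sim v_q$ with $v_p$ in the same herd tree $A_q(t_p)=A_p(t_p)$, gives $\mathrm{dist}_{A_p(t_p)}(v_p,\partial A_p(t_p)) \ge \mathrm{dist}_{A_q(t_p)}(v_q,\partial A_q(t_p)) - 1 \ge h - \mathrm{gen}(q) - 1 = h - \mathrm{gen}(p)$, and monotonicity in time propagates this to all $t \ge t_p$. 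In particular any particle ever located at a leaf (distance $0$ to $\partial$) has generation at least $h$, so
\[
\{\tau_\mathrm{leaf} \le s\} \;\subseteq\; \{\text{some particle of generation} \ge h \text{ is born by time } s\}.
\]

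Finally I would estimate the probability of the right‑hand event. Let $N_k(u)$ be the number of generation‑$k$ particles born by time $u$; then $N_0 \equiv 1$, and since the instantaneous rate of creation of generation‑$(k+1)$ particles at time $u$ is at most $d\lambda\,N_k(u)$, a standard compensation argument gives $\mathbb{E}[N_{k+1}(u)] \le d\lambda \int_0^u \mathbb{E}[N_k(r)]\,dr$, whence $\mathbb{E}[N_k(u)] \le (d\lambda u)^k/k!$ by induction on $k$. Markov's inequality then yields $\mathbb{P}(\tau_\mathrm{leaf} \le s) \le \mathbb{E}[N_h(s)] \le (d\lambda s)^h/h!$, which tends to $0$ as $h \to \infty$, so picking $h_2$ with $(d\lambda s)^h/h! < \varepsilon$ for all $h \ge h_2$ concludes. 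The only genuinely delicate point is the monotonicity of the distance‑to‑leaf functional under $h$‑splittings, and that is precisely Lemma~\ref{lem_reduce_dist}; everything else is genealogical bookkeeping plus a textbook branching first‑moment bound, so I do not anticipate a serious obstacle.
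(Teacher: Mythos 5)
Your proposal is correct and rests on the same two pillars as the paper's proof: the distance from a particle to the leaves of its herd's tree starts at $h$, drops by at most one per contact birth, is unaffected by deaths, and is non-decreasing under $h$-splittings by Lemma~\ref{lem_reduce_dist}, so reaching a leaf before time $s$ forces an event whose probability vanishes as $h\to\infty$ uniformly via a Yule-type first-moment bound. The only (harmless) variation is in the final counting step: the paper tracks the global minimum distance $D_t$ and notes that $\tau_{\mathrm{leaf}}$ requires at least $h$ total births, whereas you track per-lineage generations and bound $\mathbb{E}[N_h(s)]\le (d\lambda s)^h/h!$, which is a slightly sharper but equivalent conclusion.
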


\begin{proof}
Define
	\[D_t:= \min_{i \in \mathcal{J}_t}\; \mathrm{dist}_{A^i_t}(\alpha^i_t,\partial A^i_t),\qquad t \ge 0.\]
	That is,~$D_t$ denotes the minimum distance between a particle and a leaf vertex, among all~$h$-herds.
	Note that~$D_t$ does not decrease as the result of contact deaths, and decreases by  at most one when there is a contact birth. Lemma~\ref{lem_reduce_dist} implies that~$D_t$ does not decrease as the result of~$h$-splittings. Since $D_0 = h$,~$\tau_\mathrm{leaf}$ does not happen before the occurrence of the~$h$-th contact birth of the dynamics of~$(\Phi_t)$. The statement of the lemma now follows by noting that the number of contact births is stochastically dominated by a pure-birth process that jumps from~$m$ to~$m+1$ with rate~$d\lambda m$, and is hence independent of~$h$.
\end{proof}

\begin{proof}[Proof of Lemma~\ref{lem_manyo}]
	Fix~$\mathsf{v}>0$ and~$\lambda > \bar{\lambda}(\mathsf{v})$. Given~$h \in \mathbb{N}$, we will construct a coupling~$(\hat{\Xi}_t,\hat{\Phi}_t)$ between a herds process~$(\hat{\Xi}_t)=(\hat{\mathcal{J}}^\Xi,\{\eta^i_t\})$ with parameters~$\lambda$,~$\mathsf{v}$ and an~$h$-herds process~$(\hat{\Phi}_t)=(\hat{\mathcal{J}}^\Phi,\{(A^i_t,\alpha^i_t)\})$ with parameters~$\lambda$,~$\mathsf{v}$,~$h$. To do this, we start with a probability space in which~$(\hat{\Phi}_t)$ is defined, started from a single~$h$-herd of the form~$(B_\mathbb{T}(o,h),\{o\})$. We define~$\hat{\tau}_\mathrm{leaf}$ as in~\eqref{eq_def_leaf}, as the first time at which~$(\hat{\Phi}_t)$ has an~$h$-herd in which a leaf vertex  is occupied by a particle. Then, enlarging the probability space, we define~$(\hat{\Xi}_t)$ as follows. First, for~$t \le \hat{\tau}_\mathrm{leaf}$, we set~$\hat{\mathcal{J}}^{\Xi}_t = \hat{\mathcal{J}}^\Phi_t$ and~$\eta^i_t = \alpha^i_t$ for all~$i \in \hat{\mathcal{J}}^\Phi_t$. Next, on~$\{\hat{\tau}_\mathrm{leaf}<\infty\}$, we let~$(\hat{\Xi}_t:t \ge \hat{\tau}_\mathrm{leaf})$ evolve independently of~$(\hat{\Phi}_t)$, with the law of a herds process started from the state~$\hat{\Xi}_{\hat{\tau}_\mathrm{leaf}}$ at time~$\hat{\tau}_\mathrm{leaf}$. By checking the jump rates of both Markov chains, we then see that both marginal processes indeed have the desired distributions.

	Now, fix~$k > 0$. By Lemma~\ref{lem_for_herds}, we can choose~$t_* > 0$ such that~$\mathbb{P}(|\hat{\mathcal{J}}^\Xi_{t_*}| > k) > 3\rho/4$. Next, by Lemma~\ref{lem_leaf}, we can choose~$h_1$ large enough that, if~$h\ge h_1$, then~$\mathbb{P}(\hat{\tau}_\mathrm{leaf} < t_*) < \rho/4$. We then have
	\[\mathbb{P}(|\hat{\mathcal{J}}^\Phi_{t_*}| > k) \ge \mathbb{P}(|\hat{\mathcal{J}}^{\Xi}_{t_*}|>k) - \mathbb{P}(\hat{\mathcal{J}}^\Xi_{t_*} \neq \hat{\mathcal{J}}^\Phi_{t_*})\ge \frac{3\rho}{4} - \mathbb{P}(\hat{\tau}_\mathrm{leaf} \le t_*) \ge \frac{\rho}{2}.\]
	
\end{proof}

\subsection{Irreducibility bound: proof of Lemma~\ref{lem_regardless_of}} \label{ss_proof_regardless}
\begin{proof}[Proof of Lemma~\ref{lem_regardless_of}]
	Let us say that an~$h$-herd~$(A,\alpha)$ is \textit{unitary} if~$|\alpha| = 1$. Note that a unitary~$h$-herd does not have active edges, so it does not split.  Let us say that a unitary~$h$-herd~$(A,\alpha) = (A,\{u\})$ is of~\textit{leaf type}  if~$\deg_A(u) = 1$; otherwise we have~$\deg_A(u) = d$ (by~\eqref{eq_deg_one_or_d}), in which case we say that~$(A,\alpha)$ is of \textit{interior type}. Using these definitions, we will prove the lemma in a few steps.\\

\noindent	\textit{Step 1: from~$h$-herd to unitary~$h$-herd}. We first claim that there exists~$p_0'>0$ (depending only on~$\mathsf{v},\lambda$) such that if~$(\Phi_t)$ is started from any non-empty state we have
	\[\mathbb{P}(\text{there is a unitary~$h$-herd at time~$t = 1$}) > p_0'.\]
	This follows from two observations. First, if a unitary~$h$-herd is formed at some time~$s \in (0,1]$ (or if it is already present at time zero), then it remains present until time one with a probability that is bounded away from zero (by not being involved in any birth or death events). Second, for any~$s \ge 0$ for which~$\Phi_s$ is still alive, any existing~$h$-herd that is not unitary contains at least one active edge whose splitting would produce at least one  new unitary~$h$-herd; such a splitting occurs with rate~$\mathsf{v}$.\\
	
\noindent \textit{Step 2: from unitary~$h$-herd to unitary~$h$-herd of interior  type}.	 We now claim that there exists~$p_0''>0$ (depending only on~$\mathsf{v},\lambda$) such that, if~$(\Phi_t)$ has at least one unitary~$h$-herd at time zero, then with probability at least~$p_0''$ we have
	\[\mathbb{P}(\text{there is a unitary~$h$-herd of interior type at time~$t = 1$}) > p_0''.\]
	To see this, first observe again that a unitary~$h$-herd of interior type that is formed at some time~$s \in (0,1]$ (or that is already present at time zero) stays unchanged until time one with a probability that is bounded away from zero.  Secondly, a unitary~$h$-herd of leaf type~$(A,\alpha) = (A,\{u\})$ can produce a unitary~$h$-herd of interior type by following the steps:~(i) the particle at~$u$ gives birth at its neighbouring position~$v$, so that the edge~$\{u,v\}$ becomes active; (ii) this active edge splits, forming the two~$h$-herds
	\[\mathcal{T}((A,\{u,v\}),\{u,v\},u),\quad \mathcal{T}((A,\{u,v\}),\{u,v\},v),\]
	the second of which is a unitary $h$-herd of interior type. The probability that these steps occur within one time unit of the dynamics is again bounded away from zero, uniformly in~$h$.\\

	\noindent \textit{Step 3: from unitary~$h$-herd of interior type to~$(\mathcal{B}_{\mathbb{T}}(u,h),\{u\})$}. Finally, assume that~$(\Phi_t)$ includes at time zero a unitary~$h$-herd of interior type,~$(A,\alpha) = (A, \{u\})$, and let~$v$ be a neighbor of~$u$. Assume that in one time unit of the dynamics, the following (and nothing else) occurs involving this herd:~(i) the particle at~$u$ gives birth at~$v$, so that the edge~$\{u,v\}$ becomes active; (ii) this active edge splits; (iii) the newly formed~$h$-herd~$(\mathcal{T}((A,\{u\}),\{u,v\},u)$ has no further updates until time one.
	The probability that all this occurs is larger than some~$p_0''' > 0$ which again does not depend on~$h$. Combining this with Lemma~\ref{lem_scheme}, we obtain that with probability at least~$(p_0''')^d$, at time~$t = d$ an~$h$-herd~$(\mathcal{B}_\mathbb{T}(u,h),\{u\})$ is present in the process.\\

	The statement of the lemma now follows with~$t_0 = d+2$ and~$p_0 = p_0' \cdot p_0'' \cdot (p_0''')^d$.
\end{proof}
\subsection{Eigenfunction of~$h$-herds}\label{ss_eigen}
For any subtree~$A$ of~$\mathbb{T}$, we denote by~$[A]$ the collection of all subtrees~$\tilde{A}$ of~$\mathbb{T}$ such that there is a graph isomorphism~$\varphi: A \to \tilde{A}$. Moreover, if~$\alpha$ is a set of vertices of~$A$, we denote by~$[(A,\alpha)]$ the set of pairs~$(\tilde{A},\tilde{\alpha})$, where~$\tilde{A}$ is a subtree of~$\mathbb{T}$,~$\tilde{\alpha}$ is a set of vertices of~$\tilde{A}$ and there is an isomorphism~$\varphi: A \to \tilde{A}$ with~$\varphi(\alpha) = \tilde{\alpha}$.

We define
\begin{equation*}
	[\mathscr{A}_h] := \{[A]: A \in \mathscr{A}_h\}, \qquad
	[\mathscr{A}_h']:= \{[(A,\alpha)]: (A,\alpha) \in \mathscr{A}_h'\}.
\end{equation*}

Given the~$h$-herds process~$(\Phi_t )_{t \ge 0}= (\mathcal{J}_t,\{(A^i_t,\alpha^i_t): i \in \mathcal{J}_t\})_{t \ge 0}$, we write
\[[\Phi_t] := (\mathcal{J}_t,\{[(A^i_t,\alpha^i_t)]: i \in \mathcal{J}_t\}),\quad t \ge 0.\]
Then,~$([\Phi_t])_{t \ge 0}$ is a continuous-time, multi-type branching process with (finite) space of types equal to~$[\mathscr{A}_h']$ (to be more precise, such a branching process is obtained from~$([\Phi_t])_{t \ge 0}$ by ignoring the indices, and only keeping track of the number of $h$-herds of each possible form).
 See Chapter~V.7 of~\cite{athreya} for a treatment of continuous-time, multi-type branching processes. We note  that~$([\Phi_t])_{t \ge 0}$ is irreducible in the sense that any of the types can produce any of the other types in a finite number of steps, as a consequence of Lemma~\ref{lem_regardless_of}.

By Perron-Frobenius theory, there exists a Perron-Frobenius eigenvalue~$\mu\in \mathbb{R}$ and a corresponding eigenvector~$f:[\mathscr{A}_h']\to \mathbb{R}$  such that, for any~$h$-herd~$(A,\alpha)$, the process~$(\Phi_t)$ started from a single~$h$-herd~$(A,\alpha)$ satisfies
\begin{equation}\label{eq_first_pf}
	\frac{\mathrm{d}}{\mathrm{d}t}\left. \mathbb{E} \left[ \sum_{i \in \mathcal{J}_t} f([(A^i_t,\alpha^i_t)])  \right] \right|_{t= 0+} = \mu \cdot f([(A,\alpha)]).
\end{equation}
Moreover, we have that~$\mu > 0$ if the process survives.

We abuse notation and denote the mapping~$(A,\alpha) \mapsto f([(A,\alpha)])$ again by~$f$. With this notation,~$f$ is a real-valued function of~$h$-herds that is invariant under the~$[\cdot]$ equivalence relation. Expressing the left-hand side of~\eqref{eq_first_pf} as a sum over all possible jumps in the dynamics, we obtain
\begin{equation}\label{eq_perron_frobenius}
	S_{\mathrm{death}}(A,\alpha) + S_{\mathrm{birth}}(A,\alpha) + S_{\mathrm{switch}}(A,\alpha) = \mu\cdot f(A,\alpha),
\end{equation}
where
\begin{align*}
	&S_{\mathrm{death}}(A,\alpha) := \sum_{u \in \alpha} \left( f(A,\alpha \backslash \{u\}) - f(A,\alpha) \right),\\[.2cm]
	&S_\mathrm{birth}(A,\alpha):= \lambda \cdot \sum_{u \in A \backslash \alpha} |\{v \in \alpha: v \sim u\}|\cdot \left( f(A,\alpha \cup \{u\}) - f(A,\alpha)\right),\\[.2cm]
	&S_\mathrm{split}(A,\alpha) := \mathsf{v} \cdot \sum_{\substack{e = \{u,v\}\\ \text{active edge}\\\text{in } (A,\alpha)}} \left( f(\mathcal{T}_h((A,\alpha),e,u)) + f(\mathcal{T}_h((A,\alpha),e,v)) - f(A,\alpha)\right).
\end{align*}

\section{Switching graph and embedded~$h$-herds}\label{s_embed}

We now return to the contact process on the switching random~$d$-regular graph, aiming at proving Theorem~\ref{thm_main} with the aid of the auxiliary processes studied in the previous sections. In Section~\ref{sec_prelim}, we go over the joint construction of the evolving graph and the contact process, repeating some of the definitions that were given in the Introduction with some more details, and also explaining how a graphical construction of the particle system is implemented in this  setting. In Section~\ref{ss_splitting_trees}, we study embeddings inside a random~$d$-regular graph of trees and~$h$-herds from the collections~$\mathscr{A}_h$ and~$\mathscr{A}_h'$ from Section~\ref{s_hherds}. We also explain how a switching of a pair of edges of the~$d$-regular graph may induce a splitting of such an embedded tree or~$h$-herd. Next, in Section~\ref{ss_def_of_emb} we will study a family of embedded~$h$-herds evolving in~$(G_t)$, which we call the \textit{embedded~$h$-herds process}. Finally, in Section~\ref{ss_martingale} we develop some martingale estimates for the embedded~$h$-herds process and give the proof of Theorem~\ref{thm_main}.

\subsection{Preliminaries on switching graph and contact process}\label{sec_prelim}

Let~$n \in \mathbb{N}$. We let~$V = [n]$, and fix a perfect matching~$\varphi: V \times [d] \to V \times [d]$ (that is,~$\varphi$ is a bijection with no fixed point and equal to its own inverse). Elements of~$V$ are vertices, and elements of~$V\times [d]$ are half-edges; we call~$(x,i)\in V \times [d]$ the~$i$-th half-edge of vertex~$x$. A pair of the form~$\{(x,a),(x',a')\}$ with~$(x',a') =\varphi((x,a))$ is called an edge between~$x$ and~$x'$. Letting~$E$ denote the set of edges, we obtain  a multi-graph~$G=(V,E)$. If an edge exists between~$x$ and~$x'$, we say that these vertices are neighbors and denote this relation by~$x \sim x'$.  The degree of any vertex is defined as the number of half-edges that it possesses, and is thus equal to~$d$ for any vertex (this may differ from the number of neighbors of the vertex). Finally, we let~$\mathcal{G}_n$ denote the set of all graphs with~$n$ vertices (and degrees~$d$) obtained in this way, as~$\varphi$ ranges over all perfect matchings on the set of half-edges.

Given~$G \in \mathcal{G}_n$, we define the set of \textit{marks} on~$G$ as the set of pairs of the form~$\mathsf{m}=(\{e,e'\},\sigma)$, where~$e,e'$ are distinct edges of~$G$ and~$\sigma \in \{+,-\}$ (a mark with~$\sigma = +$ is called positive, and a mark with~$\sigma = -$ is negative). We define the graph~$\Gamma^\mathsf{m}(G)$ as follows: letting~$e = \{(x,a),(y,b)\}$ and~$e' = \{(x',a'),(y',b')\}$ with~$(x,a) < (y,b)$ and~$(x',a')< (y',b')$ in the lexicographic order on half-edges, we set~$\Gamma^\mathsf{m}(G)$ as the graph equal to~$G$, except that
\[e,e' \text{ are replaced by } \begin{cases} \{(x,a),(x',a')\},\;\{(y,b),(y',b')\} &\text{if } \sigma = +;\\\{(x,a),(y',b')\},\;\{(y,b),(x',a')\}&\text{if } \sigma = -\end{cases}
	\]
	(or more formally, the perfect matching of half-edges that produces~$G$ is replaced by the one that produces this new set of edges).

	Fix~$\mathsf{v} \ge 0$ and let~$\upupsilon_n := \frac{\mathsf{v}}{nd}$. We now define a continuous-time Markov chain on~$\mathcal{G}_n$ by prescribing that for each~$G$ and for each mark~$\mathsf{m}$ of~$G$, the chain jumps from~$G$ to~$\Gamma^\mathsf{m}(G)$ with rate~$\upupsilon_n$. We assume that the initial state of the chain is uniformly distributed on~$\mathcal{G}_n$. This gives rise to a dynamic random graph~$(G_t)_{t \ge 0}$, the \textit{switching random~$d$-regular graph on~$n$ vertices with switch rate~$\upupsilon_n$}. The jump mechanism of the chain is reversible with respect to the uniform distribution, so the dynamics of~$(G_t)_{t \ge 0}$ is stationary. 

Next, we formally define the joint evolution~$(G_t,\xi_t)_{t \ge 0}$ of the switching random~$d$-regular graph and the contact  process. This is the Markov chain with state space~$\mathcal{G}_n \times \{0,1\}^{V}$ with generator given by
\begin{align*}
	Lf(G,\xi):= &\upupsilon_n \sum_{\mathsf{m}} (f(\Gamma^\mathsf{m}(G),\xi)-f(G,\xi)) \\
	&+ \sum_{x \in \xi} (f(G,\xi\backslash \{x\})-f(G,\xi))+ \lambda\sum_{x \notin \xi} \sum_{y \in \xi} \mathcal{N}_G(x,y)\cdot (f(G,\xi \cup \{x\}) - f(G,\xi)),
\end{align*}
where we adopt the abuse of notation of associating~$\xi \in\{0,1\}^V$ with the set~$\{x:\xi(x) = 1\}$, and~$\mathcal{N}_G(x,y)$ denotes the number of edges in~$G$ between~$x$ and~$y$. Note that the first summand above makes it so that~$(G_t)_{t \ge 0}$ follows the dynamics described earlier, whereas the second and third summands give the death and birth mechanisms of the contact process, respectively.

For coupling purposes, it is useful to notice that the process~$(G_t,\xi_t)$ can be obtained from~$(G_t)$ combined with a standard Poisson graphical construction; let us briefly explain this. Together with the process~$(G_t)$, we take a family of independent Poisson processes~$\{(R^x_t)_{t \ge 0}: x \in V\}$, each with rate one, and a second family of independent Poisson processes~$\{(T^{(x,a)}_t)_{t \ge 0}:(x,a) \in V \times [d]\}$, each with rate~$\lambda$. Now assume that~$\xi_0 \in \{0,1\}^V$ is given. Let~$0 < t_1 < t_2 < \cdots$ denote the arrival times  of all Poisson processes mentioned above, in increasing order. We let~$\xi_t$ be constant in the intervals~$(0,t_1),(t_1,t_2),\ldots$, and otherwise define it recursively as follows. Say that~$\xi_{t_k-}$ is already defined. First assume that~$t_k \in R^x$ for some~$x \in V$. We then set~$\xi_{t_k} = \xi_{t_k-} \backslash \{x\}$ (we say that a recovery occurred at~$x$). Next, assume that~$t_k \in T^{(x,a)}$ for some half-edge~$(x,a)$, and let~$y$ be the vertex owning the half-edge to which~$(x,a)$ is matched in~$G_{t_k-}$ (or in~$G_{t_k}$, since with probability one the two graphs are equal). We then let~$\xi_{t_k} = \xi_{t_k-} \cup \{y\}$ in case~$x \in \xi_{t_k-}$ (we say that~$x$ transmits the infection to~$y$); otherwise we let~$\xi_{t_k} = \xi_{t_k-}$.

This construction yields a monotonicity property that is well known for the classical contact process. Let us explain how it is formulated in the present context. Assume that the processes~$(G_t)$,~$\{(R^x_t)\}$ and~$\{(T^{(x,a)}_t)\}$ are all given, and assume that~$\xi_0,\xi_0' \in \{0,1\}^V$ satisfy~$\xi_0 \le \xi_0'$ in the partial order of~$\{0,1\}^V$. Then, using these processes for the graphical construction of both~$(G_t,\xi_t)$ and~$(G_t,\xi'_t)$, where~$(\xi_t)$ is started from~$\xi_0$ and~$(\xi'_t)$ is started from~$\xi'_0$, we obtain~$\xi_t \le \xi'_t$ for all~$t \ge 0$.

We will be mostly interested in considering~$(G_t,\xi_t)$ for the contact process started from~$\xi_0 \equiv 1$. Assuming this is the case, we define
\[\tau_{(G_t)} := \inf\{t \ge 0: \xi_t \equiv 0\},\]
the \textit{extinction time} of the infection. This is the stopping time that appears in the statement of Theorem~\ref{thm_main}.

We conclude this section with a result concerning the number of loops in the switching random regular graph. Given~$G \in \mathcal{G}_n$ and~$m \in \mathbb{N}$,~$m \ge 2$, a \textit{loop of length $m$} in~$G$ is a set of~$m$ edges of~$G$ that admits an enumeration~$e_1=\{(x_1,a_1),(y_1,b_1)\}$,~$\ldots$,~$e_{m} = \{(x_{m},a_{m}),(y_{m},b_{m})\}$ so that:~$x_1,\ldots, x_{m-1}$ are all distinct,~$y_i = x_{i+1}$ for~$1 \le i \le m-1$ and~$y_{m} = x_1$. A loop of length one is simply a self-loop, that is, an edge whose half-edges both belong to the same vertex.

Let~$\ell_m(G)$ denote the number of loops of length at most~$m$ in~$G$.
\begin{proposition}\label{prop_no_loops} For all~$m\in \mathbb{N}$ and~$\kappa> 0$ there exists~$c=c(\mathsf{v},h,\kappa) > 0$ such that
	\[
		\mathbb{P}(\ell_m(G_t) > \kappa \cdot n \text{ for some } t \le \exp(c\cdot n)) < \exp(-c \cdot n)\quad \text{for all } n \in \mathbb{N}.
	\]
\end{proposition}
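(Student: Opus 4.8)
The plan is to combine two ingredients: a first-moment bound showing that at any fixed time $t$, the uniform distribution on $\mathcal{G}_n$ has very few short loops except with exponentially small probability, and a union bound over a sufficiently fine mesh of times together with a crude control on how fast $\ell_m(G_t)$ can change. Since the switching dynamics is stationary with respect to the uniform distribution, the marginal law of $G_t$ is uniform on $\mathcal{G}_n$ for every fixed $t$, so the static estimate applies at each time.

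First I would establish the static estimate: if $G$ is uniform on $\mathcal{G}_n$, then for each fixed $\kappa>0$ there is $\delta=\delta(\mathsf v,h,\kappa)>0$ with $\mathbb{P}(\ell_m(G) > \kappa n/2) \le \exp(-\delta n)$. The key computation is that the expected number of loops of length exactly $j$ (with $j\le m$) in the configuration model is bounded by a constant (depending only on $d$ and $j$) uniformly in $n$; this is the standard fact that short cycles in the random regular graph are asymptotically Poisson with bounded mean. Summing over $2\le j\le m$ shows $\mathbb{E}[\ell_m(G)] \le C(d,m)$. A first-moment (Markov) bound alone only gives a polynomially small probability, so to get the exponential bound I would instead use concentration: either a bounded-differences / Azuma argument along the edges of the matching (each half-edge swap changes $\ell_m$ by at most a bounded amount), or, more simply, control the probability of having $\kappa n/2$ short loops by a direct counting argument — choosing the $\kappa n/2$ loops and the vertices they use, and bounding the probability that all the required matchings occur, which yields a bound of the form $(C/n)^{\Theta(n)}$. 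Either route gives $\mathbb{P}(\ell_m(G_t) > \kappa n/2) \le \exp(-\delta n)$ for all large $n$, and by adjusting constants for all $n$.

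Next I would handle the time variable. Let $N_t$ be the number of switches performed by $(G_t)$ in $[0,t]$; since there are $\binom{dn/2}{2}\cdot 2 \le (dn)^2$ marks, each firing at rate $\upupsilon_n = \mathsf v/(nd)$, the total switch rate is at most $\mathsf v\, dn /2 \le \mathsf v\, dn$, so $N_t$ is dominated by a Poisson random variable of mean $\mathsf v\, dn\, t$. Each individual switch changes $\ell_m(G_t)$ by at most a bounded amount $b=b(d,m)$ (a switch alters only two edges, and removing or adding a single edge can create or destroy only boundedly many loops of length $\le m$ in a $d$-regular multigraph). Partition $[0,\exp(cn)]$ into intervals of length $1$ (say), of which there are $\exp(cn)$; on each interval $[k,k+1]$ the number of switches is $N_{k+1}-N_k$, which is Poisson with mean $\mathsf v\, dn$, hence is at most $\kappa n/(2b)$ except with probability $\exp(-c' n)$ for a suitable $c'=c'(\mathsf v,d,m,\kappa)>0$ (a Chernoff bound for the Poisson distribution). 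Thus, except on an event of probability $\exp(cn)\cdot\exp(-c'n)$, $\ell_m$ varies by at most $\kappa n/2$ within each unit interval; combined with the static bound applied at the integer times $k\le \exp(cn)$ (again a union bound costing $\exp(cn)\cdot\exp(-\delta n)$), we conclude that $\sup_{t\le \exp(cn)} \ell_m(G_t) \le \kappa n$ except with probability at most $\exp(cn-\delta n)+\exp(cn-c'n)$. Choosing $c < \min(\delta,c')/2$ makes this at most $\exp(-c n)$ for all large $n$, and shrinking $c$ further absorbs small-$n$ cases.

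The main obstacle is obtaining the \emph{exponential} (rather than merely polynomial) tail in the static estimate — a naive first moment bound is not enough. I expect the cleanest way around this is the direct combinatorial overcounting: fix a target of $\kappa n/2$ vertex-disjoint-ish short loops, bound the number of ways to choose them and the probability (under the uniform matching) that all the needed pairings are realized, using the elementary fact that conditionally the probability of any particular pairing among unmatched half-edges is $O(1/n)$; the product of $\Theta(n)$ such factors beats the $\exp(O(n))$ entropy from the choices, yielding $\exp(-\delta n)$. Care is needed because short loops may share vertices, but one can restrict attention to a sub-collection of $\Omega(n)$ loops that are pairwise at distance $\ge 2$ (at the cost of a constant factor in $\kappa$), so that the required matchings are on disjoint sets of half-edges and the probabilities genuinely multiply. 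The rest of the argument is the routine union bound and Chernoff estimate sketched above.
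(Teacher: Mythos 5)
Your overall strategy (static exponential bound at fixed times via stationarity, plus control of the time evolution over an exponentially long horizon) is sound, but the time-control step is handled quite differently from the paper, and as written it contains one quantitative error. The paper does not discretize time at all: it observes that the total switch rate is $r=\tfrac{\mathsf{v}}{2}(\tfrac{nd}{2}-1)$, so if $\ell_m(G_t)>\kappa n$ ever occurs then with probability $e^{-1}$ the graph is frozen for a further duration $r^{-1}$; hence the occupation time $\int_0^{\bar t+r^{-1}}\mathds{1}\{\ell_m(G_s)>\kappa n\}\,\mathrm{d}s$ is at least $r^{-1}$ on that event, and taking expectations and using stationarity converts the hitting probability into $e\cdot r\cdot(\bar t+r^{-1})\cdot\mathbb{P}(\ell_m(G_0)>\kappa n)$, with the static bound simply cited from Wormald (Theorem 2.19). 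This avoids any need to bound how much a single switch can change $\ell_m$. Your mesh-plus-increment argument buys essentially the same polynomially-inflated union bound, and your two proposed proofs of the static estimate (Azuma over the exposure martingale of the matching, or direct counting after extracting $\Omega(n)$ pairwise disjoint short loops) are both workable, the counting one because the expected number of collections of $k$ disjoint short loops is of order $C^k/k!$.

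The genuine flaw is in the sentence claiming that the number of switches in a unit interval, which is Poisson with mean of order $\mathsf{v}dn$, ``is at most $\kappa n/(2b)$ except with probability $\exp(-c'n)$.'' This is false whenever $\kappa/(2b)<\mathsf{v}d/4$: you are asking a Poisson variable to fall far \emph{below} its mean times a constant, and for small $\kappa$ the event $N_{k+1}-N_k\le\kappa n/(2b)$ is itself exponentially unlikely, not its complement. The fix is routine but necessary: take the mesh length to be a constant $\epsilon=\epsilon(\kappa,d,m,\mathsf{v})$ small enough that $\epsilon\,\mathsf{v}dn$ is at most, say, $\kappa n/(4b)$; then the upper Chernoff tail does give $\exp(-c'n)$, and the union bound over the $\epsilon^{-1}\exp(cn)$ mesh points still closes. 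With that correction your argument is complete, at the cost of proving the per-switch Lipschitz bound on $\ell_m$ and the static concentration estimate, both of which the paper sidesteps.
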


\begin{proof}
	Fix~$m \in \mathbb{N}$ and~$\kappa > 0$. 	We start with a simple observation. Since there are~$\frac{nd}{2}\cdot \left(\frac{nd}{2}-1\right)$ (unordered) pairs of edges in~$G$ and each pair of edge can be involved in a switch in two different ways, the total rate at which switches occur in~$(G_t)$ is
	\[r := \upupsilon_n\cdot \frac{nd}{2}\cdot \left(\frac{nd}{2}-1\right) = \frac{\mathsf{v}}{2}\cdot \left(\frac{nd}{2}-1\right).\]
	Hence, for any~$t > 0$ we have that
	\[\mathbb{P}(G_s = G_t \text{ for all } s \in [t,t+r^{-1}]) = e^{-1}.\]

	Let~$\sigma_\ell := \inf\{t \ge 0:\;\ell_m(G_t) > \kappa \cdot n\}$. Fix~$\bar{t} > 0$ and let~$A_\ell$ be the event that~$\sigma_\ell \le \bar{t}$ and the graph is unchanged in the time interval~$[\sigma_\ell,\sigma_\ell + r^{-1}]$; by the strong Markov property we have~$\mathbb{P}(A_\ell) = \mathbb{P}(\sigma_\ell \le \bar{t}) \cdot e^{-1}$. Next, we have
	\[r^{-1} \cdot \mathds{1}_{A_\ell} \le \int_0^{\bar{t}+r^{-1}} \mathds{1}\{\ell_m(G_s) > \kappa \cdot n\}\;\mathrm{d}s;\]
	integrating, multiplying by~$e$ and using stationarity gives
	\[  \mathbb{P}(\sigma_\ell \le \bar{t}) =e \cdot  \mathbb{P}(A_\ell) \le e\cdot r \cdot  \int_0^{\bar{t}+r^{-1}} \mathbb{P}(\ell_m(G_s) > \kappa \cdot n)\;  \mathrm{d}s = e\cdot r \cdot (\bar{t} + r^{-1}) \cdot \mathbb{P}(\ell_m(G_0) > \kappa \cdot n). \]
	By Theorem 2.19 in \cite{wor13} and stationarity, there exists~$c_0 = c_0(h,\kappa) > 0$ such that
	\[\mathbb{P}(\ell_m(G_0) > \kappa \cdot n) < \exp(-c_0\cdot n) \quad \text{ for all } n \in \mathbb{N}.\]
	The desired bound now follows by taking~$\bar{t} = \exp(\tfrac{c_0}{2}\cdot n)$.
\end{proof}

\subsection{Splitting trees in $d$-regular graph}\label{ss_splitting_trees}
Let~$G \in \mathcal{G}_n$. An \textit{embedded~$h$-herd in~$G$} is a pair of the form~$(B,\beta)$, where~$B$ is a subgraph of~$G$ that is isomorphic to some tree~$A \in \mathscr{A}_h$, and~$\beta$ is a subset of the set of vertices of~$B$. In these circumstances, let~$\varphi:A \to B$ be an isomorphism and~$\alpha = \varphi^{-1}(\beta)$; we then abuse notation and (recalling the notation from Section~\ref{ss_eigen}) write
\begin{align}\nonumber&f(B,\beta)=f(A,\alpha),\\
	\label{eq_death_B}&S_{\mathrm{death}}(B,\beta) = \sum_{x \in \beta} \left(f(B,\beta \backslash \{x\}) - f(B,\beta)\right)= S_{\mathrm{death}}(A,\alpha) ,\\
	\label{eq_birth_B}&S_{\mathrm{birth}}(B,\beta) =\lambda \cdot \sum_{x \in B\backslash \beta} |\{y \in \beta: y \stackrel{B}{\sim} x \}|\cdot \left(f(B,\beta \cup \{x\}) - f(B,\beta)\right)= S_{\mathrm{birth}}(A,\alpha).
\end{align}
Moreover, we define the set of active edges of~$(B,\beta)$ as the set of  edges of~$B$ whose removal would break~$B$ into two components, both intersecting~$\beta$. We will often omit the word `embedded' when it is clear from the context, so we will simply refer to~$(B,\beta)$ as an~$h$-herd.

We will now define a splitting operation on~$(B,\beta)$ which is analogous to the splitting of~$(A,\alpha)$ into $\mathcal{T}_h((A,\alpha),e,u)$ and~$\mathcal{T}_h((A,\alpha),e,v)$, where~$e=\{u,v\}$ is some active edge of~$(A,\alpha)$.
Although this splitting operation is somewhat clumsy to describe formally, it is very simple, and can be readily understood with the aid of Figure~\ref{fig:embed_two}.\\

\noindent \textbf{Splitting of~$(B,\beta)$ through active edge.}
We start by fixing an edge~$e = \{(x^1,a^1),(x^2,a^2)\}$ of~$B$ that is active with respect to~$\beta$. We will also need some ``extra space'' inside~$G$ where the augmentations that follow the breaking of~$e$ can be performed. For this purpose, we fix an edge~$e'=\{(y^1,b^1),(y^2,b^2)\}$ of~$G$ with the property that
\begin{equation}\label{eq_assumption_e}\mathcal{B}_G(y^1,2h),\mathcal{B}_G(y^2,2h)\text{ are both trees and both disjoint from~$B$}.\end{equation} Let us now define the following auxiliary graphs:
\begin{itemize}
	\item let~${D}^1,{D}^2$ be the two connected components of~$B$ that remain after the removal of~$e$, with~${D}^1$ containing~$x^1$ and~${D}^2$ containing~$x^2$ -- with similar notation as in Section~\ref{ss_splitting_trees}, we have
		\[D^1 = \mathcal{L}(B,e,x^1) \quad \text{and}\quad D^2 = \mathcal{L}(B,e,x^2);\]
	\item for~$1 \le k \le h$, let~$\tilde{D}^{1,k}$ denote the subgraph of~$\mathcal{B}_G(y^1,h)$ that is induced by the set of vertices that can be reached from~$y^1$ by a path (in~$\mathcal{B}_G(y^1,h)$) of length at most~$k-1$ that does not contain~$y^2$;
	\item similarly, for~$1 \le k \le h$, let~$\tilde{D}^{2,k}$ denote the subgraph of~$\mathcal{B}_G(y^2,h)$ that is induced by the set of vertices that can be reached from~$y^2$ by a path (in~$\mathcal{B}_G(y^2,h)$) of length at most~$k-1$ that does not contain~$y^1$.
\end{itemize}

\begin{figure}[htb]
	\begin{center}
		\setlength\fboxsep{0pt}
		\setlength\fboxrule{0.4pt}
		\fbox{\includegraphics[width = \textwidth]{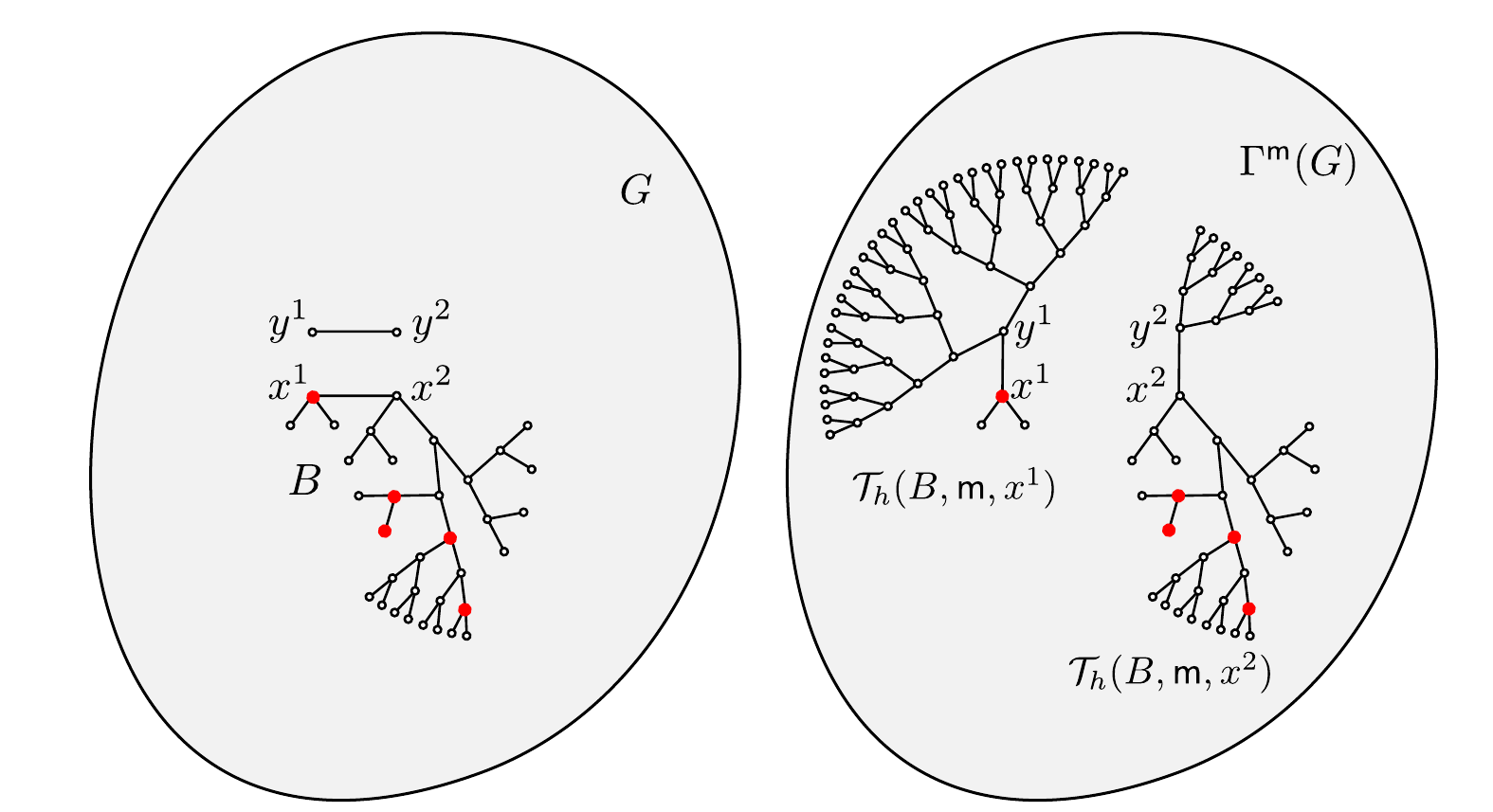}}
	\end{center}
\vspace*{-.5cm}
	\caption{Illustration of splitting involving an active edge, with~$h=5$.}
	\label{fig:embed_two}
\end{figure}

Now fix~$\sigma\in\{+,-\}$ and set~$\mathsf{m}=(\{e,e'\},\sigma)$; without loss of generality, assume that~$\sigma$ is so that~$\mathsf{m}$ associates~$x^1$ with~$y^1$ and~$x^2$ with~$y^2$. Recall that~$\Gamma^{\mathsf{m}}(G)$ denotes the graph obtained from~$G$ after performing the switch encoded by~$\mathsf{m}$.  We will now define~$\mathcal{T}_h(B,\mathsf{m},x^1)$ and~$\mathcal{T}_h(B,\mathsf{m},x^2)$, both of which will be subgraphs of~$\Gamma^{\mathsf{m}}(G)$. 
Construct~$\mathcal{T}_h(B,\mathsf{m},x^1)$ by including in it:~$D^1$, the (new) edge~$\{(x^1,a^1),(y^1,b^1)\}$, and~$\tilde{D}^{1,k_*}$, where~$k_*$ is the largest value of~$k$ so that the resulting graph has diameter at most~$2h$. Similarly, construct~$\mathcal{T}_h(B,\mathsf{m},x^2)$ by including in it:~$D^2$, the (new) edge~$\{(x^2,a^2),(y^2,b^2)\}$, and~$\tilde{D}^{2,k_{**}}$, where~$k_{**}$ is the largest value of~$k$ such that  the resulting graph has diameter at most~$2h$. Finally, define
	\begin{align*} &\mathcal{T}_h((B,\beta),\mathsf{m},x^1):= (\mathcal{T}_h(B,\mathsf{m},x^1),\beta \cap D^1), \\
	&\mathcal{T}_h((B,\beta),\mathsf{m},x^2):= (\mathcal{T}_h(B,\mathsf{m},x^2),\beta \cap D^2).\end{align*}
Note that both of these are embedded~$h$-herds in~$\Gamma^\mathsf{m}(G)$.
 
 Again take~$(A,\alpha) \in \mathscr{A}_h'$ that is mapped under some isomorphism~$\varphi$ to~$(B,\beta)$, let~$u^1 = \varphi^{-1}(x^1)$,~$u^2 = \varphi^{-1}(x^2)$, and~$e_0=\{u^1,u^2\}$. We have that there exists an  isomorphism that maps the~$h$-herd~$\mathcal{T}_h((A,\alpha),e_0,u^1)$ into~$\mathcal{T}_h((B,\beta),\mathsf{m},x^1)$, and similarly there exists an isomorphism that maps~$\mathcal{T}_h((A,\alpha),e_0,u^2)$ into~$\mathcal{T}_h((B,\beta),\mathsf{m},x^2)$.  In particular, we have
\begin{equation}\label{eq_relation_ABm}\begin{split}
	&f(\mathcal{T}_h((B,\beta),\mathsf{m},x^1)) = f(\mathcal{T}_h((A,\alpha),e_0,u^1)),\\
&f(\mathcal{T}_h((B,\beta),\mathsf{m},x^2)) = f(\mathcal{T}_h((A,\alpha),e_0,u^2).\end{split}
\end{equation}

Still assuming that~$e'$ satisfies~\eqref{eq_assumption_e}, now define
\begin{equation}\label{eq_def_S_switch}\begin{split}& S_{\mathrm{split}}(B,\beta,e')\\
&\quad:=\upupsilon_n\cdot \sum_{\substack{\mathsf{m}=(\{e,e'\},\sigma):\\e=\{x^1,x^2\}\\ \text{ active in } (B,\beta),\\\sigma \in \{+,-\}}} \left( f(\mathcal{T}_h((B,\beta),\mathsf{m},x^1)) + f(\mathcal{T}_h((B,\beta),\mathsf{m},x^2)) - f(B,\beta)  \right).\end{split}\end{equation}
Using~\eqref{eq_relation_ABm}, we obtain that
\begin{equation}\label{eq_relation_SBSA}\begin{split}
	S_{\mathrm{split}}(B,\beta,e') &= \upupsilon_n \cdot 2 \cdot \sum_{\substack{e=\{u^1,u^2\} \\\text{active in }(A,\alpha)}} \left( f(\mathcal{T}_h((A,\alpha),e,u^1)) + f(\mathcal{T}_h((A,\alpha),e,u^2)- f(A,\alpha)\right)\\&= \frac{2\upupsilon_n}{\mathsf{v}}\cdot S_{\mathrm{split}}(A,\alpha).
\end{split}\end{equation}

\begin{lemma}
	\label{lem_single_herd}
	Assume that~$\lambda$ is larger than $\bar{\lambda}(\mathsf{v},h)$, the critical value for the~$h$-herds process~$(\Phi_t)_{t \ge 0}$, and let~$\mu$ and~$f$ be the associated Perron-Frobenius eigenvalue and eigenfunction, respectively, as in Section~\ref{ss_eigen}.
	 There exists~$\varepsilon_0= \varepsilon_0(\lambda,\mathsf{v},h) > 0$ such that the following holds for any~$n$. Assume that~$G\in \mathcal{G}_n$ and~$(B,\beta)$ is an embedded~$h$-herd in~$G$. Let~$\Lambda$ be a set of edges of~$G$ with the properties that every edge $e'=\{(y^1,b^1),(y^2,b^2)\}\in\Lambda$ satisfies \eqref{eq_assumption_e} with respect to~$B$, and~$|\Lambda| > (1-\varepsilon_0)\frac{dn}{2}$.
	 We then have
	\begin{equation}\label{eq_sum_want}
		S_{\mathrm{death}}(B,\beta) + S_{\mathrm{birth}}(B,\beta) + \sum_{e' \in \Lambda} S_{\mathrm{split}}(B,\beta, e')\ge \frac{\mu}{2}\cdot f(B,\beta).
	\end{equation}
\end{lemma}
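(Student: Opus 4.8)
The plan is to transport every term appearing in \eqref{eq_sum_want} to the abstract $h$-herd $(A,\alpha)\in\mathscr{A}_h'$ to which $(B,\beta)$ is isomorphic, and then compare with the Perron--Frobenius identity \eqref{eq_perron_frobenius}. By \eqref{eq_death_B} and \eqref{eq_birth_B} we have $S_{\mathrm{death}}(B,\beta)=S_{\mathrm{death}}(A,\alpha)$ and $S_{\mathrm{birth}}(B,\beta)=S_{\mathrm{birth}}(A,\alpha)$. For the splitting terms, the hypothesis says that every $e'\in\Lambda$ satisfies \eqref{eq_assumption_e} with respect to $B$, so \eqref{eq_relation_SBSA} applies to each such $e'$ and gives $S_{\mathrm{split}}(B,\beta,e')=\tfrac{2\upupsilon_n}{\mathsf{v}}\,S_{\mathrm{split}}(A,\alpha)=\tfrac{2}{nd}\,S_{\mathrm{split}}(A,\alpha)$, a quantity that does \emph{not} depend on which $e'\in\Lambda$ was chosen. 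Summing, the left-hand side of \eqref{eq_sum_want} equals $S_{\mathrm{death}}(A,\alpha)+S_{\mathrm{birth}}(A,\alpha)+\theta\,S_{\mathrm{split}}(A,\alpha)$ where $\theta:=\tfrac{2|\Lambda|}{nd}$. Since $G$ has exactly $\tfrac{nd}{2}$ edges and $(1-\varepsilon_0)\tfrac{nd}{2}<|\Lambda|\le\tfrac{nd}{2}$, we get $1-\varepsilon_0<\theta\le1$.

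Next I would write $\theta\,S_{\mathrm{split}}(A,\alpha)=S_{\mathrm{split}}(A,\alpha)-(1-\theta)S_{\mathrm{split}}(A,\alpha)$ and use \eqref{eq_perron_frobenius} to rewrite the left-hand side of \eqref{eq_sum_want} as $\mu\,f(A,\alpha)-(1-\theta)S_{\mathrm{split}}(A,\alpha)$. Here $f>0$ everywhere (it is the Perron--Frobenius eigenvector of the irreducible branching process $([\Phi_t])$, irreducibility being Lemma~\ref{lem_regardless_of}) and $\mu>0$, since $\lambda>\bar\lambda(\mathsf{v},h)$ forces the $h$-herds process to survive with positive probability. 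If $S_{\mathrm{split}}(A,\alpha)\le0$ then $-(1-\theta)S_{\mathrm{split}}(A,\alpha)\ge0$ and the left-hand side is at least $\mu f(A,\alpha)\ge\tfrac{\mu}{2}f(A,\alpha)$, so \eqref{eq_sum_want} holds for any $\varepsilon_0$. If $S_{\mathrm{split}}(A,\alpha)>0$ then, using $1-\theta<\varepsilon_0$, the left-hand side is at least $\mu f(A,\alpha)-\varepsilon_0 S_{\mathrm{split}}(A,\alpha)$, so it suffices to pick $\varepsilon_0$ small enough that $\varepsilon_0 S_{\mathrm{split}}(A,\alpha)\le\tfrac{\mu}{2}f(A,\alpha)$.

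The only structural input needed is that $[\mathscr{A}_h']$ is finite: every $A\in\mathscr{A}_h$ has diameter $2h$ by \eqref{eq_deg_one_or_d}, hence bounded size, so there are finitely many isomorphism classes of pairs $(A,\alpha)$. Therefore $f_{\min}:=\min_{[\mathscr{A}_h']}f>0$ and $S^\ast:=\max_{(A,\alpha)\in\mathscr{A}_h'}S_{\mathrm{split}}(A,\alpha)<\infty$, both depending only on $\lambda,\mathsf{v},h$ because $f$ and $S_{\mathrm{split}}$ are invariant under the $[\cdot]$-equivalence. Setting $\varepsilon_0:=\dfrac{\mu\,f_{\min}}{2(1+\max(S^\ast,0))}$ then gives $\varepsilon_0 S_{\mathrm{split}}(A,\alpha)\le\varepsilon_0 S^\ast\le\tfrac{\mu}{2}f_{\min}\le\tfrac{\mu}{2}f(A,\alpha)$ in the remaining case, which finishes the proof.

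I do not expect a genuine obstacle here: the lemma is bookkeeping built from \eqref{eq_relation_SBSA}, the eigenfunction identity \eqref{eq_perron_frobenius}, and the positivity of $\mu$ and $f$. The two points that need care are (i) making $\varepsilon_0$ uniform, i.e.\ independent of $n$ and of the particular embedded herd $(B,\beta)$ — which is exactly what the finiteness of $[\mathscr{A}_h']$ provides — and (ii) tracking the sign of $S_{\mathrm{split}}(A,\alpha)$, which is the reason for the short case distinction above.
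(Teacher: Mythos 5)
Your proof is correct and follows essentially the same route as the paper: transport to $(A,\alpha)$ via \eqref{eq_death_B}, \eqref{eq_birth_B}, \eqref{eq_relation_SBSA}, compare with the eigenfunction identity \eqref{eq_perron_frobenius}, and absorb the error term $(1-\theta)S_{\mathrm{split}}(A,\alpha)$ using a uniform bound on $S_{\mathrm{split}}$ over the finitely many isomorphism classes (the paper writes $|S_{\mathrm{split}}(A,\alpha)|\le 3\mathsf{v}|\mathcal{B}_{\mathbb{T}}(o,2h)|f_{\max}$ instead of your sign case distinction, but this is only a cosmetic difference).
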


\begin{proof} Let~$\varepsilon_0 > 0$ be a small constant to be chosen later, and let~$G$,~$(B,\beta)$ and~$\Lambda$ be as in the statement.
	Fix~$A \in \mathscr{A}$ such that~$A$ is isomorphic to~$B$, and let~$\alpha\subseteq A$ be the set of vertices corresponding to~$\beta$ under the isomorphism.
Using~\eqref{eq_death_B},~\eqref{eq_birth_B} and~\eqref{eq_relation_SBSA}, we have that the left-hand side of~\eqref{eq_sum_want} equals
	\begin{align}\nonumber&S_{\mathrm{death}}(A,\alpha) + S_{\mathrm{birth}}(A,\alpha) + \frac{2\upupsilon_n|\Lambda|}{\mathsf{v}}\cdot S_{\mathrm{split}}(A,\alpha)\\
		&\nonumber=S_{\mathrm{death}}(A,\alpha) + S_{\mathrm{birth}}(A,\alpha) +S_{\mathrm{split}}(A,\alpha) + \left( \frac{2\upupsilon_n|\Lambda|}{\mathsf{v}}-1\right) \cdot S_{\mathrm{split}}(A,\alpha)\\
		&\nonumber\ge S_{\mathrm{death}}(A,\alpha) + S_{\mathrm{birth}}(A,\alpha) +S_{\mathrm{split}}(A,\alpha) -\left(1-\frac{|\Lambda|}{dn/2} \right)\cdot |S_{\mathrm{split}}(A,\alpha)| \\
	\label{eq_my_mu_bound}&\ge \mu\cdot f(A,\alpha) - \varepsilon_0 \cdot |S_{\mathrm{split}}(A,\alpha)|,\end{align}
	where in the first inequality we used that~$\upupsilon_n = \frac{\mathsf{v}}{dn}$ and~$|\Lambda| \le \frac{dn}{2}$ (the number of edges of~$G$) and in the second inequality we used~\eqref{eq_perron_frobenius} and~$|\Lambda| > (1-\varepsilon_0) \cdot \frac{dn}{2}$.

	Now, using the  definition of~$S_{\mathrm{split}}(A,\alpha)$, it is  easy to check that
	\[|S_\mathrm{split}(A,\alpha)| \le 3\mathsf{v}\cdot |\mathcal{B}_\mathbb{T}(o,2h)| \cdot f_{\mathrm{max}},\] where~$f_{\mathrm{max}}$ is the largest value attained by the function~$f$. Hence, by taking~$\varepsilon_0 \le \frac{\mu \cdot f_\mathrm{min}}{6\mathsf{v}\cdot |\mathcal{B}_\mathbb{T}(o,2h)|\cdot f_\mathrm{max}}$, where~$f_\mathrm{min}$ is the smallest value attained by~$f$, we obtain that the expression in~\eqref{eq_my_mu_bound} is larger than
	\[{\mu}\cdot f(A,\alpha) - 3\mathsf{v}\cdot |\mathcal{B}_\mathbb{T}(o,2h)|\cdot f_\mathrm{max} \cdot \frac{\mu \cdot f_\mathrm{min}}{6 \mathsf{v}\cdot  |\mathcal{B}_\mathbb{T}(o,2h)|\cdot f_\mathrm{max}} \ge  \frac{\mu}{2}\cdot f(A,\alpha) = \frac{\mu}{2}\cdot f(B,\beta).\]
\end{proof}

We finally give one more definition, namely the splitting of~$(B,\beta)$ through an inactive edge. This operation has no corresponding mechanism in the $h$-herds process, but it will be important for the process embedded in the~$d$-regular graph to be defined shortly. The operation will only produce one new $h$-herd (as opposed to two new $h$-herds in the splitting through an active edge), since one of the components of~$B$ after the removal of~$e$ does not intersect~$\beta$ and will be discarded. See Figure~\ref{fig:embed_one}.  \\

\noindent  \textbf{Splitting of~$(B,\beta)$ through inactive edge}. Fix~$(B,\beta)$ as above. Let $e = \{(x^1,a^1),(x^2,a^2)\}$ be an inactive edge of~$B$ with respect to~$\beta$. As before, using the notation of Section~\ref{ss_splitting_trees}, let
\[D^1 = \mathcal{L}(B,e,x^1),\qquad D^2 = \mathcal{L}(B,e,x^2).\]
 Since~$e$ is inactive, we can also assume that~$\beta$ is contained in~$D^1$.  Let~$e' = \{(y^1,b^1),(y^2,b^2)\}$ be an edge of~$G$ satisfying~\eqref{eq_assumption_e} with respect to~$B$, and let~$\sigma \in \{+,-\}$ be such that the mark~$\mathsf{m} = (\{e,e'\},\sigma)$ matches~$x^1$ to~$y^1$ and~$x^2$ to~$y^2$. Let~$\tilde{D}$ denote the subgraph of~$\mathcal{B}_G(y^1,2h)$ that is induced by the set of vertices that can be reached from~$y^1$ by a path (in~$\mathcal{B}_G(y^1,2h)$) of length at most~$2h$ that does not contain~$y^2$.
Now, fix a graph isomorphism~$\psi$ from~$D^2$ to a subgraph of~$\tilde{D}$ with~$\psi(x^2) = y^1$ (note that this is possible because the diameter of~$D^2$ is at most the diameter of~$B$, so at most~$2h$). Finally, define~$\mathcal{T}'_h(B,\mathsf{m})$ as the subgraph of~$\Gamma^\mathsf{m}(G)$ constructed by including in it:~$D^1$, the (new) edge~$\{(x^1,a^1),(y^1,b^1)\}$, and the subgraph of~$\tilde{D}$ induced by~$\psi(D^2)$. Finally let
\[\mathcal{T}'_h((B,\beta),\mathsf{m}) :=(\mathcal{T}'_h(B,\mathsf{m}),\beta).\]

The idea of this operation is that~$\mathcal{T}'_h((B,\beta),\mathsf{m})$ is isomorphic to~$(B,\beta)$; in particular,
we have~$f(\mathcal{T}'_h((B,\beta),\mathsf{m})) = f(B,\beta)$.

\begin{figure}[htb]
	\begin{center}
		\setlength\fboxsep{0pt}
		\setlength\fboxrule{0.5pt}
		\fbox{\includegraphics[width = 0.8\textwidth]{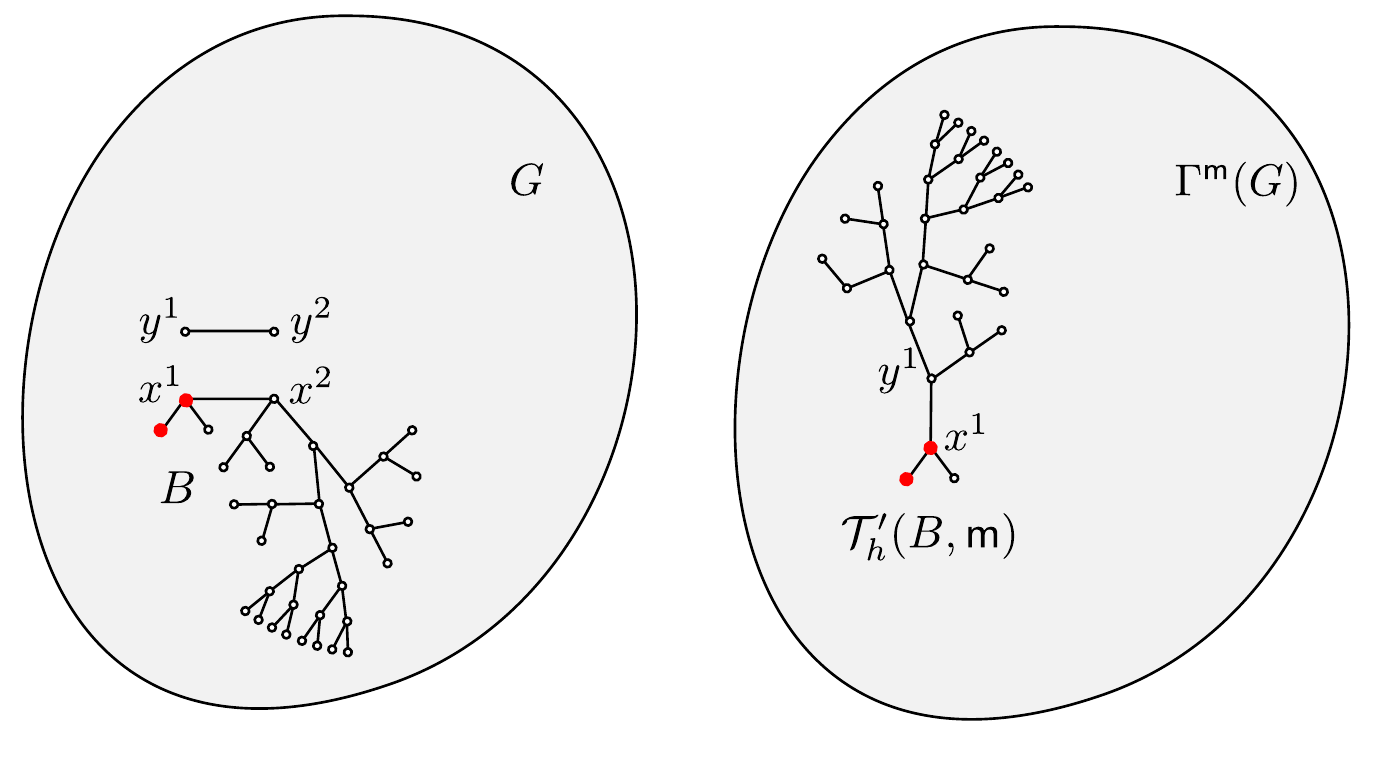}}
	\end{center}
\vspace*{-.5cm}
	\caption{Illustration of splitting involving an inactive edge, with~$h=5$.}
	\label{fig:embed_one}
\end{figure}

\subsection{Definition of the embedded~$h$-herds process}\label{ss_def_of_emb}
We will now define a process~$(G_t,\Psi_t)_{t \ge 0} = (G_t, (\mathcal{J}_t, \{(B^i_t,\beta^i_t): i \in \mathcal{J}_t\}))_{t \ge 0}$, which we describe informally as follows. The first marginal,~$(G_t)$, is simply a switching graph taking values in~$\mathcal{G}_n$ with switch rate~$\upupsilon_n=\frac{v}{nd}$. The second marginal,~$(\Psi_t)$, is given at each time~$t$ by a set of indices~$\mathcal{J}_t$ (which we again take as natural numbers), each of which refers to an embedded~$h$-herd~$(B^i_t,\beta^i_t)$ of~$G_t$. These~$h$-herds are such that~$B^i_t \cap B^j_t = \varnothing$ if~$i \neq j$.

Formally, the dynamics of~$(G_t,\Psi_t)$ is again described as a continuous-time Markov chain, with the dynamics given by jumps of three different types: contact births within~$h$-herds, contact deaths with removal of empty~$h$-herds,  and switches. Births and deaths occur in exactly the same  way as in the~$h$-herds process described in Section~\ref{ss_def_Phi}, so we refrain from repeating the description. Note in particular that births and deaths have no effect on the graph~$G_t$, but only on the indexed set of~$h$-herds~$\Psi_t$.

Let us now explain the effect of switches. Assume that the current state of the chain is \[(G,\Psi)=(G,(\mathcal{J}, \{(B^i,\beta^i):i \in \mathcal{J}\}).\] For each switch mark~$\mathsf{m} = (\{e,e'\},\sigma)$, where~$e,e'$ are edges of~$G$ and~$\sigma \in \{+,-\}$, the switch with mark~$\mathsf{m}$ occurs with rate~$\upupsilon_n=\frac{\mathsf{v}}{dn}$. The graph~$G$ is then replaced by the graph~$\Gamma^\mathsf{m}(G)$, as explained in Section~\ref{sec_prelim}. The indexed set of~$h$-herds~$\Psi$ is also replaced by an updated version~$\Gamma^\mathsf{m}(\Psi)$, which we will define below, according to different cases. First, define
\begin{equation}\label{eq_def_of_Lamb}\Lambda(G,\Psi):=\left\{ \begin{array}{l} \tilde{e}= \{(z^1,h^1),(z^2,h^2)\} \text{ edge of } G\backslash (\cup_{i \in \mathcal{J}} B^i) \text{ such that }\\[.1cm]
\mathcal{B}_G(z^1,2h)\text{ and } \mathcal{B}_G(z^2,2h) \text{ are both trees and both disjoint from }\cup_{i \in \mathcal{J}} B^i\end{array}\right\}.\end{equation}
Now (still fixing a mark~$\mathsf{m} = (\{e,e'\},\sigma)$ with~$e,e'$ edges of~$G$), we distinguish the cases: 
\begin{itemize}
	\item if neither~$e$ nor~$e'$ is contained in~$\cup_{i \in \mathcal{J}}B^i$, then the switch is called \textit{neutral} and we set~$\Gamma^\mathsf{m}(\Psi) = \Psi$;
	\item if one of the two edges (say,~$e$) is contained in~$\cup_{i \in \mathcal{J}}B^i$, and the other edge ($e'$) is contained in~$\Lambda(G,\Psi)$, then the switch is called \textit{good}. Further, letting~$i_* \in \mathcal{J}$ denote the index of the~$h$-herd in which~$e$ is contained,
		\begin{itemize}
			\item[-] we say that the switch is \textit{good and active} if~$e$ is an active edge of~$(B^{i_*},\beta^{i_*})$. In that case, we define~$\Gamma^\mathsf{m}(\Psi)$ by replacing the~$h$-herd with index~$i_*$ by the two~$h$-herds~$\mathcal{T}_h((B^{i_*},\beta^{i_*}),\mathsf{m},x^1)$ and $\mathcal{T}_h((B^{i_*},\beta^{i_*}),\mathsf{m},x^2)$, where~$x^1$ and~$x^2$ are the two vertices of~$e$; we leave all other~$h$-herds of~$\Psi$ unchanged (and update the index set accordingly);
			\item[-] we say that the  switch is \textit{good and inactive} if~$e$ is an inactive edge of~$(B^{i_*},\beta^{i_*})$. In that case, we define~$\Gamma^\mathsf{m}(\Psi)$ by replacing the~$h$-herd with index~$i_*$ by~${\mathcal{T}}_h'((B^{i_*},\beta^{i_*}),\mathsf{m})$; this new herd also receives the index~$i^*$, and the remaining~$h$-herds are left unchanged;
		\end{itemize}	
	\item in any other case (meaning: either if both~$e,e'$ are contained in~$\cup_{i\in\mathcal{J}}B^i$, or if one of them is contained in this union and the other is not, but also not in~$\Lambda(G,\Psi)$), the switch is called \textit{bad}. In that case,~$\Gamma^\mathsf{m}(\Psi)$ is given by deleting from~$\Psi$ the~$h$-herd (or~$h$-herds) that contain~$e$ or~$e'$ (or both); all other~$h$-herds are left unchanged, and the index set is updated accordingly.
\end{itemize}

Hence, the embedded~$h$-herds process~$(\Psi_t)$ evolves together with~$(G_t)$ essentially in the same way as the~$h$-herds  process~$(\Phi_t)$, with the only difference that some of the edge switches try to cause~$h$-herds  to expand towards occupied or undesirable regions of the graph, in which case the~$h$-herds involved in the accident are removed. Most of the effort of the remaining of this section will be to argue that, provided that~$G_t$ has few loops and~$\Psi_t$ does not occupy a large portion of~$G_t$, then accidents  are very unlikely, so the process tends to grow as~$\Phi_t$ would. This will be implemented by studying the growth of the process
\[X_t:= \sum_{i \in \Psi_t} f(B^i_t,\beta^i_t),\quad t \ge 0,\]
where~$f$ is the Perron-Frobenius eigenfunction defined in Section~\ref{ss_eigen}.

\subsection{Martingale estimates and survival}\label{ss_martingale}
For the rest of this section, we fix~$\mathsf{v}$,~$h$ and~$\lambda$ satisfying~$\lambda > \bar{\lambda}(\mathsf{v},h)$. We take the corresponding Perron-Frobenius eigenvalue~$\mu$ and eigenfunction~$f$, and let~$f_{\mathrm{min}}$ and~$f_{\mathrm{max}}$ denote the minimum and maximum values attained by~$f$, respectively. We write
\[\mathsf{C}_h := |\mathcal{B}_\mathbb{T}(o,2h)| = 1 + d + \cdots + d^{2h}.\]
Finally, recall the definition of~$\Lambda(G,\Psi)$ from~\eqref{eq_def_of_Lamb}.

\begin{lemma}\label{lem_derivative}
	There exists~$\varepsilon_1 > 0$ and~$\delta > 0$ such that for any~$t \ge 0$, on the event~$\{\Lambda(G_t,\Psi_t) \ge (1-\varepsilon_1)\frac{dn}{2}\}$, we have
	\begin{equation*}
		\left.\frac{\mathrm{d}}{\mathrm{d}s} \mathbb{E}[\exp(-\delta \cdot X_{t+s})\mid \mathcal{F}_t] \right|_{s=0+} \le -  \frac{\delta \cdot \mu}{4}\cdot X_t\cdot\exp(-\delta \cdot X_t).
	\end{equation*}
\end{lemma}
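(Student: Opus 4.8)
The plan is to compute the instantaneous generator of the process $s \mapsto \exp(-\delta X_{t+s})$ at $s=0+$, classifying all possible jumps of $(G_{t+s},\Psi_{t+s})$ into the categories already introduced (contact births, contact deaths, neutral switches, good-and-active switches, good-and-inactive switches, bad switches). Neutral switches and good-and-inactive switches leave $X$ unchanged, so they contribute nothing. A contact birth in $h$-herd $i$ changes $f(B^i,\beta^i)$ by $f(B^i,\beta^i\cup\{v\})-f(B^i,\beta^i)$ at rate $\lambda$ per admissible $(u,v)$; a contact death changes it by $f(B^i,\beta^i\setminus\{u\})-f(B^i,\beta^i)$ at rate $1$ per $u$; a good-and-active switch through an active edge $e$ of $(B^i,\beta^i)$, paired with $e'\in\Lambda(G_t,\Psi_t)$, changes $X$ by $f(\mathcal{T}_h((B^i,\beta^i),\mathsf{m},x^1))+f(\mathcal{T}_h((B^i,\beta^i),\mathsf{m},x^2))-f(B^i,\beta^i)$ at rate $\upupsilon_n$ per such mark. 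Writing $\Delta X$ for the jump size in $X$ in each case, the generator is
\[
\left.\frac{\mathrm{d}}{\mathrm{d}s}\mathbb{E}[e^{-\delta X_{t+s}}\mid\mathcal{F}_t]\right|_{s=0+}
= \sum_{\text{jumps}} (\text{rate})\cdot\left(e^{-\delta(X_t+\Delta X)}-e^{-\delta X_t}\right)
= e^{-\delta X_t}\sum_{\text{jumps}}(\text{rate})\cdot\left(e^{-\delta\,\Delta X}-1\right).
\]

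Next I would Taylor-expand $e^{-\delta\Delta X}-1 = -\delta\Delta X + O(\delta^2 (\Delta X)^2)$, uniformly since every $|\Delta X|$ is bounded by $2f_{\max}$ (each jump changes at most a bounded number of $h$-herds, each contributing at most $f_{\max}$) and, crucially, the total jump rate out of any state is bounded in terms of $X_t$: births and deaths have rate at most $(d\lambda+1)\,\mathsf{C}_h\,(\text{number of }h\text{-herds})$, and since each nonempty $h$-herd contributes at least $f_{\min}$ to $X_t$, the number of $h$-herds is at most $X_t/f_{\min}$; likewise the total rate of good-and-active switches affecting $h$-herd $i$ is at most $\upupsilon_n\cdot 2|\Lambda|\cdot(\text{number of active edges}) \le \mathsf{v}\cdot\mathsf{C}_h$ per $h$-herd. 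Hence the $O(\delta^2)$ remainder is bounded by $C\delta^2 X_t$ for a constant $C=C(\mathsf{v},h,\lambda)$, and collecting the first-order terms gives, on the event $\{|\Lambda(G_t,\Psi_t)|\ge(1-\varepsilon_1)\tfrac{dn}{2}\}$,
\[
\left.\frac{\mathrm{d}}{\mathrm{d}s}\mathbb{E}[e^{-\delta X_{t+s}}\mid\mathcal{F}_t]\right|_{s=0+}
\le e^{-\delta X_t}\left(-\delta\sum_{i\in\mathcal{J}_t}\left[S_{\mathrm{death}}(B^i_t,\beta^i_t)+S_{\mathrm{birth}}(B^i_t,\beta^i_t)+\!\!\sum_{e'\in\Lambda(G_t,\Psi_t)}\!\!S_{\mathrm{split}}(B^i_t,\beta^i_t,e')\right] + C\delta^2 X_t\right).
\]
Here I use that, for each $h$-herd $i$, the sum of the first-order rate contributions of its births, deaths, and good-and-active switches is precisely $\delta\big(S_{\mathrm{death}}(B^i_t,\beta^i_t)+S_{\mathrm{birth}}(B^i_t,\beta^i_t)+\sum_{e'\in\Lambda}S_{\mathrm{split}}(B^i_t,\beta^i_t,e')\big)$ with a minus sign, matching the quantities in Lemma~\ref{lem_single_herd} and the inequality uses that bad switches can only \emph{decrease} $X$ (they delete $h$-herds), so dropping their negative contribution is in our favor.

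Then, invoking Lemma~\ref{lem_single_herd} with $\Lambda = \Lambda(G_t,\Psi_t)$ — valid once $\varepsilon_1 \le \varepsilon_0$ so that $|\Lambda(G_t,\Psi_t)| > (1-\varepsilon_0)\tfrac{dn}{2}$, and noting that by definition every $e'\in\Lambda(G_t,\Psi_t)$ satisfies \eqref{eq_assumption_e} with respect to each $B^i_t$ — each bracketed term is at least $\tfrac{\mu}{2}f(B^i_t,\beta^i_t)$, so the total sum over $i$ is at least $\tfrac{\mu}{2}X_t$. This yields the bound
\[
\left.\frac{\mathrm{d}}{\mathrm{d}s}\mathbb{E}[e^{-\delta X_{t+s}}\mid\mathcal{F}_t]\right|_{s=0+}
\le e^{-\delta X_t}\left(-\frac{\delta\mu}{2}X_t + C\delta^2 X_t\right)
= -\delta X_t e^{-\delta X_t}\left(\frac{\mu}{2}-C\delta\right),
\]
and choosing $\delta$ small enough that $C\delta \le \mu/4$ gives the claimed $-\tfrac{\delta\mu}{4}X_t e^{-\delta X_t}$. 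The main obstacle — and the step requiring the most care — is the uniform control of the second-order remainder: one must verify that the total jump rate out of a state is genuinely bounded by a constant times $X_t$ (not worse), which rests on the two facts that each $h$-herd contributes at least $f_{\min}>0$ to $X_t$ and that each $h$-herd, living on a tree of diameter $2h$ and hence at most $\mathsf{C}_h$ vertices, has a bounded number of vertices, edges, and active edges; these constants all depend on $h$ (and $\mathsf{v},\lambda$) but not on $n$, which is exactly what Lemma~\ref{lem_derivative} permits. A secondary point to handle cleanly is bookkeeping the good-and-active switch contributions so that summing over $e'\in\Lambda(G_t,\Psi_t)$ and over the two choices of $\sigma$ reproduces exactly $\sum_{e'\in\Lambda}S_{\mathrm{split}}(B^i_t,\beta^i_t,e')$ as defined in \eqref{eq_def_S_switch}.
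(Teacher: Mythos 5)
Your overall architecture matches the paper's proof: expand the generator of $e^{-\delta X}$ over all jump types, identify the first-order term with $\sum_i\bigl(S_\mathrm{death}+S_\mathrm{birth}+\sum_{e'\in\Lambda}S_\mathrm{split}\bigr)$, invoke Lemma~\ref{lem_single_herd}, and absorb the Taylor remainder using that the total jump rate is $O(X_t)$ (via $|\mathcal{J}_t|\le X_t/f_\mathrm{min}$ and the $\mathsf{C}_h$ bounds). However, there is a genuine error in your treatment of the bad switches. A bad switch deletes one or two $h$-herds, so it has $\tilde X\le X$, and hence its contribution to $\frac{\mathrm{d}}{\mathrm{d}s}\mathbb{E}[e^{-\delta X_{t+s}}\mid\mathcal{F}_t]\big|_{s=0+}$, namely $\upupsilon_n\,(e^{-\delta\tilde X}-e^{-\delta X})$, is \emph{nonnegative}. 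You are proving an \emph{upper} bound on this derivative, so a nonnegative term cannot be ``dropped\ldots in our favor'': omitting it only bounds a strictly smaller quantity. Moreover, the first-order part of the bad-switch contribution is of order $\delta$, not $\delta^2$, so it cannot be hidden inside your $C\delta^2 X_t$ remainder either; it must be bounded explicitly.

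This is precisely where the hypothesis $\Lambda(G_t,\Psi_t)\ge(1-\varepsilon_1)\frac{dn}{2}$ enters a second time, beyond making Lemma~\ref{lem_single_herd} applicable. A bad switch pairs an edge of $\cup_i B^i$ (at most $\mathsf{C}_h\,|\mathcal{J}|$ choices) with an edge of $G$ outside $\Lambda(G_t,\Psi_t)$ (at most $\varepsilon_1\frac{dn}{2}$ choices) and a sign, so there are at most $2\mathsf{C}_h\,|\mathcal{J}|\,\varepsilon_1\frac{dn}{2}$ bad marks; with $|\tilde X-X|\le 4f_\mathrm{max}$ and $\upupsilon_n=\frac{\mathsf{v}}{nd}$, their aggregate first-order contribution is at most $4f_\mathrm{max}\,\mathsf{v}\,\mathsf{C}_h\,\varepsilon_1\,|\mathcal{J}|$ in absolute value. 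Since the good drift is at least $\frac{\mu}{2}f_\mathrm{min}|\mathcal{J}|$, one needs the \emph{additional} smallness constraint $\varepsilon_1<\frac{\mu\, f_\mathrm{min}}{32 f_\mathrm{max}\,\mathsf{v}\,\mathsf{C}_h}$ (as in the paper) to make the bad term a controlled fraction of the good one; requiring only $\varepsilon_1\le\varepsilon_0$ does not suffice. With that extra step inserted, your argument closes and coincides with the paper's.
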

\begin{proof}
	By the Markov property, it suffices to treat~$t = 0$. We let~$\varepsilon_1 > 0$ and~$\delta > 0$ be  small constants to be chosen later, and fix~$(G_0, \Psi_0) = (G,\Psi)$ such that~$\Lambda(G,\Psi) \ge (1-\varepsilon_1)\frac{dn}{2}$. We write~$X = X_0$.

	Let~$\mathcal{N}_\mathrm{cont}$ be the set of pairs~$(\tilde{G},\tilde{\Psi})$ that can be reached from~$(G,\Psi)$ with a single jump of the dynamics of~$\{(G_t,\Psi_t)\}$ of the type ``contact death'' or ``contact birth''. For each~$(\tilde{G},\tilde{\Psi})$, let~$r(\tilde{G},\tilde{\Psi})$ denote the rate at which~$(G,\Psi)$ jumps to~$(\tilde{G},\tilde{\Psi})$; note that~$r(\tilde{G},\tilde{\Psi}) \in \{1,\lambda,\ldots, d\lambda\}$.

	Next, let~$\mathcal{N}_\mathrm{sw}$ be the set of pairs~$(\tilde{G},\tilde{\Psi})$ that can be reached from~$(G,\Psi)$ with a single jump of the dynamics of~$\{(G_t,\Psi_t)\}$ associated to a switch. We decompose
	\[\mathcal{N}_\mathrm{sw} = \mathcal{N}_\mathrm{neutral} \cup \mathcal{N}_\mathrm{inactive} \cup \mathcal{N}_\mathrm{active} \cup \mathcal{N}_\mathrm{bad},\]
	according to the type of switch (with respect to~$(G,\Psi)$) that corresponds to the jump from~$(G,\Psi)$ to~$(\tilde{G},\tilde{\Psi})$, as categorized in Section~\ref{ss_def_of_emb}.

	We abbreviate, for~$(\tilde{G},\tilde{\Psi}) \in \mathcal{N}_\mathrm{cont} \cup \mathcal{N}_\mathrm{sw}$,
	\[\tilde{X} = \tilde{X}(\tilde{G},\tilde{\Psi}) = \sum_{i \in \tilde{\mathcal{J}}} f(\tilde{B}^i,\tilde{\beta}^i),\quad \text{where} \quad \tilde{\Psi} = (\tilde{\mathcal{J}},\{(\tilde{B}^i,\tilde{\beta}^i):i \in \tilde{\mathcal{J}}\}).\]
We can now write
	\begin{equation}\begin{split}\left.\frac{\mathrm{d}}{\mathrm{d}s} \mathbb{E}[\exp(-\delta \cdot X_{s})] \right|_{s=0+} 
	=&\sum_{(\tilde{G},\tilde{\Psi}) \in \mathcal{N}_\mathrm{cont}} \; r(\tilde{G},\tilde{\Psi})\cdot (e^{-\delta \cdot \tilde{X} }-e^{-\delta \cdot X}) \\[.2cm]&+ \sum_{(\tilde{G},\tilde{\Psi}) \in \mathcal{N}_\mathrm{sw}} \upupsilon_n\cdot (e^{-\delta \cdot \tilde{X}} - e^{-\delta \cdot X}).\end{split}\label{eq_auxiliary_with_Psi}\end{equation}
	Note that in the last sum above, we can discard the pairs~$(\tilde{G},\tilde{\Psi}) \in\mathcal{N}_\mathrm{neutral} \cup \mathcal{N}_\mathrm{inactive}$, as for these cases we have~$\tilde{X} = X$. Next, defining ~$\mathscr{E}(z) := e^{-z} + z - 1$ for~$z \in \mathbb{R}$ we have
	\[e^{-\tilde{z}}- e^{-z} = e^{-z}(e^{-(\tilde{z}-z)}-1) =- e^{-z}(\tilde{z}-z + \mathscr{E}(\tilde{z}-z));\]
		using this in the right-hand side of~\eqref{eq_auxiliary_with_Psi}, we obtain
	\begin{equation}\label{eq_second_for_der}
		\left.\frac{\mathrm{d}}{\mathrm{d}s} \mathbb{E}[\exp(-\delta \cdot X_{s})] \right|_{s=0+} = -\delta \cdot e^{-\delta \cdot X} \cdot (\mathcal{S} + \mathcal{S}' + \mathcal{S}''),
	\end{equation}
where
	\begin{align*}
		&\mathcal{S} := \sum_{(\tilde{G},\tilde{\Psi}) \in \mathcal{N}_\mathrm{cont}} r(\tilde{G},\tilde{\Psi})\cdot (\tilde{X} - X) + \upupsilon_n \cdot \sum_{(\tilde{G},\tilde{\Psi}) \in \mathcal{N}_\mathrm{active}}(\tilde{X} - X),\\[.2cm]
		&\mathcal{S}' := \upupsilon_n \cdot \sum_{(\tilde{G},\tilde{\Psi}) \in \mathcal{N}_\mathrm{bad}}(\tilde{X} - X),\\[.2cm]	
		&\mathcal{S}'':= \sum_{(\tilde{G},\tilde{\Psi}) \in \mathcal{N}_\mathrm{cont}} r(\tilde{G},\tilde{\Psi})\cdot \frac{\mathscr{E}(\delta\cdot(\tilde{X} - X))}{\delta} + \upupsilon_n \cdot \sum_{(\tilde{G},\tilde{\Psi}) \in \mathcal{N}_\mathrm{active}\cup \mathcal{N}_\mathrm{bad}}\frac{\mathscr{E}(\delta\cdot(\tilde{X} - X))}{\delta}.
	\end{align*}
		
		We now make the key observation that
		\begin{align*}
			\mathcal{S} =\sum_{i \in \mathcal{J}} \left( S_\mathrm{birth}(B^i,\beta^i) + S_\mathrm{death}(B^i,\beta^i) + \sum_{e \in \Lambda(G,\Psi)} S_\mathrm{switch}(B^i,\beta^i,e)\right),
		\end{align*}
		where we have employed the notation introduced in~\eqref{eq_death_B},~\eqref{eq_birth_B} and~\eqref{eq_def_S_switch}. By Lemma~\ref{lem_single_herd}, if~$\varepsilon_1 \le \varepsilon_0$ and~$\Lambda(G,\Psi) >(1- \varepsilon_1)\frac{dn}{2}$, then
	\begin{equation}\label{eq_with_S}	
			\mathcal{S} \ge \sum_{i \in \mathcal{J}} \frac{\mu}{2}\cdot f(B^i,\beta^i) = \frac{\mu}{2}\cdot X.
	\end{equation}

	We now claim that
	\begin{equation}\label{eq_first_last}
		|\mathcal{S}'| \le \frac14\cdot \mathcal{S} \quad \text{and}\quad |\mathcal{S}''| \le \frac14\cdot \mathcal{S}.
	\end{equation}
Together with~\eqref{eq_second_for_der} and~\eqref{eq_with_S}, this will complete the proof. In order to prove this claim, let us start with a simple observation. 
	Since any jump from~$(G,\Psi)$ affects (possibly erasing) at most two pairs $(B^i,\beta^i)$, and brings in at most two new pairs~$(\tilde{B}^i,\tilde{\beta}^i)$, we have
	\begin{equation}
		\label{eq_bound_diffX} |\tilde{X} - X| \le 4f_\mathrm{max}\quad \text{for all } (\tilde{G},\tilde{\Psi}) \in \mathcal{N}_\mathrm{cont} \cup \mathcal{N}_\mathrm{active} \cup \mathcal{N}_\mathrm{bad}.
	\end{equation}
	This gives
	\[|\mathcal{S}'| \le 4f_\mathrm{max} \cdot \upupsilon_n \cdot |\mathcal{N}_\mathrm{bad}|.\] Next, any bad switch must involve one edge in~$\cup_{i \in \mathcal{J}} B^i$, one edge of~$G$ that is not in~$\Lambda(G,\Psi)$, and one parity signal~$\sigma$. Also using the fact that any~$B^i$ has at most~$\mathsf{C}_h$ edges, we obtain the bound
	\[|\mathcal{N}_\mathrm{bad}| \le 2\mathsf{C}_h\cdot |\mathcal{J}| \cdot (\tfrac{nd}{2}-|\Lambda(G,\Psi)|) \le  2\mathsf{C}_h\cdot |\mathcal{J}| \cdot \varepsilon_1 \cdot \tfrac{nd}{2}.\]
	Combining these  inequalities and using~$\upupsilon_n = \tfrac{\mathsf{v}}{nd}$ gives
	\[|\mathcal{S}'| \le 4f_\mathrm{max}\cdot \mathsf{v} \cdot \mathsf{C}_h \cdot  \varepsilon_1\cdot |\mathcal{J}|  \]
	Noting  that~\eqref{eq_with_S} and the definition of~$X$ give~$\mathcal{S} \ge \tfrac{\mu}{2} \cdot f_\mathrm{min}\cdot |\mathcal{J}|$, we obtain that, if~$\varepsilon_1 < \frac{\mu\cdot f_\mathrm{min}}{32f_\mathrm{max}\cdot \mathsf{v}\cdot  \mathsf{C}_h}$, then the first inequality in~\eqref{eq_first_last} holds.

	It remains to prove the second inequality in~\eqref{eq_first_last}. Using the definition of~$\mathscr{E}$ and~\eqref{eq_bound_diffX}, we have that there exists~$C > 0$ and~$\delta_0 > 0$  such that, if~$\delta < \delta_0$,
	\[\frac{|\mathscr{E}(\delta\cdot (\tilde{X}-X))|}{\delta} \le C\delta\quad \text{for all } (\tilde{G},\tilde{\Psi}) \in \mathcal{N}_\mathrm{cont} \cup \mathcal{N}_\mathrm{active} \cup \mathcal{N}_\mathrm{bad}.
\]
	We thus obtain, for any~$\delta < \delta_0$,
	\[|\mathcal{S}''| \le C\delta\cdot \left((d\lambda + 1)\cdot |\mathcal{N}_\mathrm{cont}| + \upupsilon_n \cdot |\mathcal{N}_\mathrm{active} \cup \mathcal{N}_\mathrm{bad}|\right).\]
	We have the easy bounds
	\begin{equation}\label{eq_easy_one}|\mathcal{N}_\mathrm{cont}| \le 2\mathsf{C}_h \cdot |\mathcal{J}|.\end{equation}
and
	\begin{equation}\label{eq_easy_two}|\mathcal{N}_\mathrm{active} \cup \mathcal{N}_\mathrm{bad}| \le |\mathcal{N}_\mathrm{sw}|\le 2\mathsf{C}_h\cdot |\mathcal{J}|\cdot \frac{nd}{2},\end{equation} 
	the latter bound coming from choosing an edge of~$\cup_i B^i$ and an arbitrary other edge of~$G$. Again using~$\upupsilon_n = \frac{\mathsf{v}}{nd}$, we then get, for some constant~$C' > 0$,
	\[|\mathcal{S}''| \le C'\delta \cdot |\mathcal{J}|. \]
	Again noting that~\eqref{eq_with_S} gives~$\mathcal{S} \ge \tfrac{\mu}{2}\cdot f_\mathrm{min} \cdot |\mathcal{J}|$, we can choose~$\delta$ small enough that the desired second inequality in~\eqref{eq_first_last} holds.
\end{proof}

\begin{lemma}\label{lem_first_super}
Define the stopping times
	\begin{align}\label{eq_frak_lambda}&\mathfrak{t}_\Lambda := \inf\{t \ge 0:\Lambda(G_t,\Psi_t) < (1-\varepsilon_1)dn\},\\
		\label{eq_frak_low}&\mathfrak{t}_\mathrm{low} := \inf\left\{t \ge 0: X_t \le \frac{X_0}{2}\cdot e^{\frac{\mu}{4}\cdot t} \right\}
\end{align}
and the process
	\[Y_t:= \exp\left(\delta\cdot \left( \frac{X_0}{2} \cdot \exp\left( \frac{\mu}{4}\cdot t\right) -  X_t\right)\right),\qquad t\ge 0,\]
where~$\varepsilon_1$ and~$\delta$ are the constants given in Lemma~\ref{lem_derivative}. 
	We then have that~$(Y_{t \wedge \mathfrak{t}_\Lambda \wedge \mathfrak{t}_\mathrm{low}})_{t \ge 0}$ is a supermartingale. Moreover,
	\[ \mathbb{P}\left( \mathsf{t}_\mathrm{low} < \infty,\; \mathsf{t}_\mathrm{low} < \mathsf{t}_\Lambda\right) < \exp(-\tfrac{\delta}{2}\cdot X_0). \]
\end{lemma}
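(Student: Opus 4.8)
The idea is to recognise $Y_t$ as a time-modulated exponential of $X_t$, namely $Y_t = g(t)\,e^{-\delta X_t}$ with $g(t):=\exp\!\big(\tfrac{\delta}{2}X_0 e^{\mu t/4}\big)$, and to combine the elementary time-derivative of $g$ with the infinitesimal drift of $e^{-\delta X_t}$ supplied by Lemma~\ref{lem_derivative}. Since $(G_t,\Psi_t)$ performs finitely many jumps in bounded time intervals (the number of contact births and deaths is dominated by a pure-birth process, exactly as in the non-explosiveness argument of Section~\ref{sec_basic}, and between two consecutive such jumps only boundedly many switches can change $\Psi_t$), Dynkin's formula for the time-inhomogeneous generator applies, and for $t<\mathfrak t_\Lambda\wedge\mathfrak t_\mathrm{low}$ we have
\[
\left.\frac{\mathrm{d}}{\mathrm{d}s}\,\mathbb E[Y_{t+s}\mid\mathcal F_t]\right|_{s=0+}
= g'(t)\,e^{-\delta X_t} + g(t)\left.\frac{\mathrm{d}}{\mathrm{d}s}\,\mathbb E[e^{-\delta X_{t+s}}\mid\mathcal F_t]\right|_{s=0+}.
\]
Now $g'(t)=\tfrac{\delta\mu}{8}X_0 e^{\mu t/4}\,g(t)$, and before $\mathfrak t_\Lambda$ the hypothesis of Lemma~\ref{lem_derivative} is in force, so the second term is at most $-\tfrac{\delta\mu}{4}X_t\,g(t)\,e^{-\delta X_t}$. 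Adding the two and using $g(t)e^{-\delta X_t}=Y_t$ gives
\[
\left.\frac{\mathrm{d}}{\mathrm{d}s}\,\mathbb E[Y_{t+s}\mid\mathcal F_t]\right|_{s=0+}
\le \frac{\delta\mu}{4}\Big(\frac{X_0}{2}\,e^{\mu t/4}-X_t\Big)\,Y_t.
\]

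On $\{t<\mathfrak t_\mathrm{low}\}$ one has $X_t>\tfrac{X_0}{2}e^{\mu t/4}$ by the definition of $\mathfrak t_\mathrm{low}$, so the bracket above is strictly negative while $Y_t\ge0$; hence the drift is $\le0$ on $\{t<\mathfrak t_\Lambda\wedge\mathfrak t_\mathrm{low}\}$, and $(Y_{t\wedge\mathfrak t_\Lambda\wedge\mathfrak t_\mathrm{low}})_{t\ge0}$ is a supermartingale. It is moreover bounded: for $t<\mathfrak t_\mathrm{low}$ the exponent defining $Y_t$ is negative, so $Y_t<1$; and at $\mathfrak t_\mathrm{low}$ a single jump lowers $X$ by at most $4f_\mathrm{max}$ by~\eqref{eq_bound_diffX}, while $X_{\mathfrak t_\mathrm{low}-}\ge\tfrac{X_0}{2}e^{\mu\mathfrak t_\mathrm{low}/4}$, so $Y_{\mathfrak t_\mathrm{low}}\le e^{4\delta f_\mathrm{max}}$.

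For the tail bound, note that $Y_0=\exp\!\big(\delta(\tfrac{X_0}{2}-X_0)\big)=e^{-\delta X_0/2}$, and on $\{\mathfrak t_\mathrm{low}<\infty,\ \mathfrak t_\mathrm{low}<\mathfrak t_\Lambda\}$ the definition of $\mathfrak t_\mathrm{low}$ gives $X_{\mathfrak t_\mathrm{low}}\le\tfrac{X_0}{2}e^{\mu\mathfrak t_\mathrm{low}/4}$, hence $Y_{\mathfrak t_\mathrm{low}}\ge1$. Applying the supermartingale inequality at the bounded time $t\wedge\mathfrak t_\Lambda\wedge\mathfrak t_\mathrm{low}$, using $Y\ge0$, and letting $t\to\infty$,
\[
e^{-\delta X_0/2}=Y_0\ \ge\ \mathbb E\big[Y_{\mathfrak t_\mathrm{low}}\,\mathds{1}\{\mathfrak t_\mathrm{low}<\infty,\ \mathfrak t_\mathrm{low}<\mathfrak t_\Lambda\}\big]\ \ge\ \mathbb P\big(\mathfrak t_\mathrm{low}<\infty,\ \mathfrak t_\mathrm{low}<\mathfrak t_\Lambda\big).
\]
To upgrade this to the strict inequality in the statement, observe that the drift bound above is \emph{strictly} negative whenever $X_t$ lies strictly above the curve $\tfrac{X_0}{2}e^{\mu t/4}$, which (since $X_0>\tfrac{X_0}{2}$) is the case on a time interval of positive length starting at $0$; thus $\mathbb E[Y_{t\wedge\mathfrak t_\Lambda\wedge\mathfrak t_\mathrm{low}}]<Y_0$ already for small $t>0$, giving the strict bound.

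\textbf{Main obstacle.} There is no serious difficulty here; the substance is all in Lemma~\ref{lem_derivative}. The only points needing care are (i) the bookkeeping that makes the $X_0$-term coming from $g'$ and the $X_t$-term coming from Lemma~\ref{lem_derivative} assemble into the sign-definite factor $\tfrac{X_0}{2}e^{\mu t/4}-X_t$, and (ii) justifying Dynkin's formula and optional stopping even though $X_t$ is unbounded — handled by the boundedness of the \emph{stopped} process noted above. Passing from $\le$ to the strict $<$ is a routine afterthought.
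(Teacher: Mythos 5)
Your proposal is correct and follows essentially the same route as the paper: writing $Y_t$ as a deterministic exponential factor times $e^{-\delta X_t}$, combining the product rule with the drift bound of Lemma~\ref{lem_derivative} to get the supermartingale property before $\mathfrak{t}_\Lambda \wedge \mathfrak{t}_\mathrm{low}$, and then applying optional stopping with $Y_{\mathfrak{t}_\mathrm{low}} \ge 1$ on the event in question. Your extra care in justifying optional stopping via boundedness of the stopped process, and your observation that the paper's argument as written only yields $\le$ (so that a small additional step is needed for the strict inequality), are both sound refinements of the same argument.
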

\begin{proof}
	For any~$t \ge 0$, on the event~$\{\mathfrak{t}_\Lambda \wedge \mathfrak{t}_\mathrm{low} > t\}$ we have
	\begin{align*}
		\left. \frac{\mathrm{d}}{\mathrm{d}s} \mathbb{E}[Y_{t+s}\mid \mathcal{F}_t]\right|_{s = 0+} =& \left. \frac{\mathrm{d}}{\mathrm{d}s} \exp\left(\tfrac12\cdot {\delta \cdot X_0\cdot e^{\frac{\mu}{4}\cdot (t+s)}} \right) \right|_{s=0+}\cdot e^{-\delta X_t} \\[.2cm]&+ \exp\left(\tfrac12\cdot {\delta \cdot X_0\cdot e^{\frac{\mu}{4}t}}\right)\cdot \left. \frac{\mathrm{d}}{\mathrm{d}s} \mathbb{E}[e^{-\delta \cdot X_{t+s}}\mid \mathcal{F}_t]\right|_{s = 0+}\\[.2cm]
		&\le \left(\tfrac18\cdot{\delta \cdot \mu \cdot X_0}\cdot e^{\frac{\mu}{4}\cdot t}- \tfrac14 \cdot\delta\cdot \mu \cdot X_t\right) \cdot Y_t \le 0,
	\end{align*}
	where in the first inequality we used Lemma~\ref{lem_derivative} and the assumption that~$t < \mathfrak{t}_\Lambda$, and in the second inequality we used the assumption that~$t < \mathfrak{t}_\mathrm{low}$. This proves the first statement.
	For the second statement, the optional stopping theorem gives
	\begin{align*}
		\exp(-\delta\cdot X_0/2) =Y_0 \ge \mathbb{E}[Y_{\mathsf{t}_\mathrm{low}}\cdot \mathds{1}\{\mathsf{t}_\mathrm{low} < \infty,\; \mathsf{t}_\mathrm{low} < \mathsf{t}_\Lambda\}] \ge \mathbb{P}(\mathsf{t}_\mathrm{low} < \infty,\; \mathsf{t}_\mathrm{low} < \mathsf{t}_\Lambda).
	\end{align*}
\end{proof}

\begin{lemma}\label{lem_new_super}
	There exists~$\bar{\mu} > 0$ (depending only on~$d,\lambda,\mathsf{v},h$) such that
	\[
		\left. \frac{\mathrm{d}}{\mathrm{d}s}\mathbb{E}[e^{X_{t+s}}\mid \mathcal{F}_t]\right|_{s=0+} \le \bar{\mu}\cdot X_t \cdot e^{X_t}.
	\]
\end{lemma}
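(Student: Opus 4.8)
The plan is to differentiate $s\mapsto \mathbb{E}[e^{X_{t+s}}\mid\mathcal{F}_t]$ at $s=0+$ by summing over all possible jumps of $(G_t,\Psi_t)$ out of the current state, exactly as in the proof of Lemma~\ref{lem_derivative}, but now with the convex function $z\mapsto e^{z}$ instead of $z\mapsto e^{-\delta z}$. By the Markov property it suffices to treat $t=0$; write $X=X_0$ and, for each state $(\tilde G,\tilde\Psi)$ reachable in one jump, $\tilde X$ for the corresponding value of $\sum_i f(\tilde B^i,\tilde\beta^i)$ and let $r(\tilde G,\tilde\Psi)$ denote the jump rate (which is in $\{1,\dots,d\lambda\}$ for contact jumps and $\upupsilon_n$ for switches). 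Then
\[
\left.\frac{\mathrm d}{\mathrm ds}\mathbb{E}[e^{X_s}\mid\mathcal{F}_0]\right|_{s=0+}
= e^{X}\sum_{(\tilde G,\tilde\Psi)} r(\tilde G,\tilde\Psi)\cdot\big(e^{\tilde X-X}-1\big).
\]
The key point is that each summand is bounded: by the same observation used for \eqref{eq_bound_diffX}, any single jump changes $X$ by at most $4f_{\mathrm{max}}$ in absolute value, so $|e^{\tilde X-X}-1|\le e^{4f_{\mathrm{max}}}-1 + 4f_{\mathrm{max}}\le C_0$ for a constant $C_0=C_0(d,\lambda,\mathsf v,h)$ (in fact one can just use $e^{\tilde X-X}-1\le e^{4f_{\mathrm{max}}}$).

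Next I would bound the total rate of jumps that can affect $X$. As in the proof of Lemma~\ref{lem_derivative}, only jumps in $\mathcal{N}_\mathrm{cont}\cup\mathcal{N}_\mathrm{active}\cup\mathcal{N}_\mathrm{bad}$ change $X$ (neutral and good-inactive switches leave $\tilde X=X$, contributing $0$). Using \eqref{eq_easy_one} and \eqref{eq_easy_two} together with $\upupsilon_n=\mathsf v/(nd)$, the total rate of such jumps is at most
\[
(d\lambda+1)\cdot|\mathcal{N}_\mathrm{cont}| + \upupsilon_n\cdot|\mathcal{N}_\mathrm{active}\cup\mathcal{N}_\mathrm{bad}|
\le (d\lambda+1)\cdot 2\mathsf{C}_h|\mathcal{J}| + \tfrac{\mathsf v}{nd}\cdot 2\mathsf{C}_h|\mathcal{J}|\cdot\tfrac{nd}{2}
\le C_1\cdot|\mathcal{J}|
\]
for a constant $C_1=C_1(d,\lambda,\mathsf v,h)$, where $|\mathcal{J}|$ is the number of $h$-herds in $\Psi_0$. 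Combining the two bounds,
\[
\left.\frac{\mathrm d}{\mathrm ds}\mathbb{E}[e^{X_s}\mid\mathcal{F}_0]\right|_{s=0+}\le C_0\,C_1\cdot|\mathcal{J}|\cdot e^{X}.
\]
Finally, since every $h$-herd is nonempty and $f\ge f_{\mathrm{min}}>0$, we have $X=\sum_{i\in\mathcal{J}}f(B^i,\beta^i)\ge f_{\mathrm{min}}\cdot|\mathcal{J}|$, i.e.\ $|\mathcal{J}|\le X/f_{\mathrm{min}}$. Substituting gives the claim with $\bar\mu := C_0\,C_1/f_{\mathrm{min}}$, which depends only on $d,\lambda,\mathsf v,h$ as required.

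I do not foresee a serious obstacle here; the statement is much cruder than Lemma~\ref{lem_derivative} (no sign control is needed, only a linear-in-$X$ upper bound for the generator applied to $e^{X}$), so the only things to be careful about are: (i) correctly identifying which jump types leave $X$ unchanged so they drop out of the sum, (ii) getting the rate counting right — in particular that $\upupsilon_n$ times the $O(n)$ switch count is $O(1)$ per $h$-herd — and (iii) converting the bound from $|\mathcal{J}|$ to $X$ via the uniform lower bound $f_{\mathrm{min}}>0$, which is exactly the same device used at the end of the proof of Lemma~\ref{lem_derivative}. One should also note that $e^{X_t}$ is finite and the derivative is well defined because $X_t$ is bounded on bounded time intervals (the number of $h$-herds is a.s.\ finite for all $t$, as remarked for $(\Phi_t)$ in Section~\ref{ss_def_Phi}), so the term-by-term differentiation of the finite sum over reachable states is justified.
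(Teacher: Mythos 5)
Your proposal is correct and follows essentially the same route as the paper's proof: expand the generator applied to $e^{X}$ over one-step jumps, discard the neutral and good-inactive switches (which leave $X$ unchanged), bound $e^{\tilde X - X}-1$ uniformly via \eqref{eq_bound_diffX}, control the total jump rate by $C\cdot|\mathcal{J}|$ using \eqref{eq_easy_one}--\eqref{eq_easy_two} and $\upupsilon_n = \mathsf{v}/(nd)$, and convert $|\mathcal{J}|$ to $X$ via $f_{\mathrm{min}}$. No gaps.
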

\begin{proof}
We again only treat~$t= 0$. Recalling the notation from the proof of Lemma~\ref{lem_derivative}, we have
	\begin{equation}
		\left. \frac{\mathrm{d}}{\mathrm{d}s}\mathbb{E}[e^{X_s}]\right|_{s = 0+} = e^{X} \cdot \left(\sum_{(\tilde{G},\tilde{\Psi}) \in \mathcal{N}_\mathrm{cont}} r(\tilde{G},\tilde{\Psi})\cdot (e^{\tilde{X}-X}-1) + \upupsilon_n\cdot \sum_{(\tilde{G},\tilde{\Psi}) \in \mathcal{N}_\mathrm{active}\cup \mathcal{N}_\mathrm{bad}}(e^{\tilde{X}-X}-1)\right).\label{eq_with_tildeX}
	\end{equation}
	Recall that~$r(\tilde{G},\tilde{\Psi}) \le (d\lambda + 1)$ for~$(\tilde{G},\tilde{\Psi}) \in \mathcal{N}_\mathrm{cont}$, and let
	\[ K:= e^{4f_\mathrm{max}}+1 \stackrel{\eqref{eq_bound_diffX}}{\ge} e^{\tilde{X}-X}-1 \quad \text{for all } (\tilde{G},\tilde{\Psi}) \in \mathcal{N}_\mathrm{cont} \cup \mathcal{N}_\mathrm{active} \cup \mathcal{N}_\mathrm{bad}.\]
	The expression in~\eqref{eq_with_tildeX} is then smaller than
	\begin{align*}&e^X\cdot K \cdot \left((d\lambda + 1)\cdot |\mathcal{N}_\mathrm{cont}|+\tfrac{\mathsf{v}}{dn} \cdot |\mathcal{N}_\mathrm{active} \cup \mathcal{N}_\mathrm{bad}|\right)\\
	&\stackrel{\eqref{eq_easy_one}, \eqref{eq_easy_two}}{\le} e^X\cdot K \cdot \left((d\lambda + 1)\cdot 2\mathsf{C}_h\cdot |\mathcal{J}| + \tfrac{\mathsf{v}}{dn}\cdot 2\mathsf{C}_h \cdot |\mathcal{J}|\cdot \tfrac{nd}{2} \right).\end{align*}
	Noting that~$|\mathcal{J}| \le \frac{1}{f_\mathrm{min}}\cdot X$, the proof is complete.
\end{proof}

\begin{lemma}\label{lem_neww_super}
 Defining the stopping time
	\begin{equation}\label{eq_frak_high}\mathsf{t}_\mathrm{high} := \inf\left\{t \ge 0:\; X_t \ge 2X_0\cdot e^{\bar{\mu}\cdot t}\right\},\end{equation}
	we have
	\[\mathbb{P}\left(\mathsf{t}_\mathrm{high} < \infty\right) < \exp(-X_0).\]
\end{lemma}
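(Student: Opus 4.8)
The plan is to run the argument of the second half of Lemma~\ref{lem_first_super}, but now bounding~$X_t$ from above, using the drift estimate of Lemma~\ref{lem_new_super} in place of that of Lemma~\ref{lem_derivative}. Set
\[W_t := \exp\!\left(X_t - 2X_0\,e^{\bar{\mu}\cdot t}\right),\qquad t \ge 0.\]
I would show that~$(W_{t\wedge \mathsf{t}_\mathrm{high}})_{t \ge 0}$ is a supermartingale; since~$W_0 = \exp(X_0 - 2X_0) = \exp(-X_0)$ (here we use that~$X_0 > 0$, which holds whenever~$\Psi_0 \neq \varnothing$ because~$f \ge f_{\mathrm{min}} > 0$), the value of~$W$ at the barrier then gives the claim.

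To verify the supermartingale property, fix~$t$ and work on the event~$\{t < \mathsf{t}_\mathrm{high}\}$, on which~$X_t < 2X_0\,e^{\bar{\mu} t}$. Writing~$g(s) := 2X_0\,e^{\bar{\mu} s}$ (so that~$g' = \bar{\mu}\,g$) and differentiating the product~$\mathbb{E}[W_{t+s}\mid \mathcal{F}_t] = e^{-g(t+s)}\cdot \mathbb{E}[e^{X_{t+s}}\mid \mathcal{F}_t]$ at~$s = 0+$, we get
\[\left.\frac{\mathrm{d}}{\mathrm{d}s}\mathbb{E}[W_{t+s}\mid \mathcal{F}_t]\right|_{s=0+} = -\bar{\mu}\,g(t)\,e^{-g(t)}\,e^{X_t} + e^{-g(t)}\cdot \left.\frac{\mathrm{d}}{\mathrm{d}s}\mathbb{E}[e^{X_{t+s}}\mid \mathcal{F}_t]\right|_{s=0+}.\]
By Lemma~\ref{lem_new_super} the second term is at most~$\bar{\mu}\,X_t\,e^{X_t-g(t)}$, so the right-hand side is at most~$\bar{\mu}\,W_t\,(X_t - g(t)) \le 0$. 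Since for fixed~$n$ the chain~$(G_t,\Psi_t)$ has finite state space, each~$W_s$ is bounded, hence~$(W_{t\wedge \mathsf{t}_\mathrm{high}})_{t\ge 0}$ is a genuine (nonnegative) supermartingale started at~$W_0$.

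To conclude, fix~$t \ge 0$: the supermartingale property gives~$\mathbb{E}[W_{t\wedge \mathsf{t}_\mathrm{high}}] \le W_0 = e^{-X_0}$. On~$\{\mathsf{t}_\mathrm{high}\le t\}$, right-continuity of~$X$ and the definition of~$\mathsf{t}_\mathrm{high}$ give~$X_{\mathsf{t}_\mathrm{high}} \ge 2X_0\,e^{\bar{\mu}\,\mathsf{t}_\mathrm{high}}$, hence~$W_{t\wedge \mathsf{t}_\mathrm{high}} = W_{\mathsf{t}_\mathrm{high}}\ge 1$; since~$W \ge 0$ throughout, this yields~$\mathbb{P}(\mathsf{t}_\mathrm{high}\le t) \le \mathbb{E}[W_{t\wedge \mathsf{t}_\mathrm{high}}] \le e^{-X_0}$. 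Letting~$t \to \infty$ (so that~$\mathds{1}\{\mathsf{t}_\mathrm{high}\le t\}\uparrow \mathds{1}\{\mathsf{t}_\mathrm{high}<\infty\}$) gives~$\mathbb{P}(\mathsf{t}_\mathrm{high}<\infty)\le e^{-X_0}$, and the strict inequality is obtained exactly as in the corresponding step of Lemma~\ref{lem_first_super} (alternatively, by replacing~$2X_0$ with~$2X_0 - \varepsilon$ in the definitions of~$W_t$ and~$\mathsf{t}_\mathrm{high}$). I do not anticipate a real obstacle here: the computation is a routine mirror image of Lemmas~\ref{lem_derivative}--\ref{lem_first_super}, and the only minor points are the differentiation of the product above (which relies on the differentiability asserted in Lemma~\ref{lem_new_super}) and the monotone passage~$t\to\infty$.
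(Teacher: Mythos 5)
Your proposal is correct and follows essentially the same route as the paper: the paper also defines the supermartingale $Z_t=\exp(X_t-2X_0 e^{\bar{\mu}t})$, verifies the drift condition via Lemma~\ref{lem_new_super}, and applies optional stopping at $\mathsf{t}_\mathrm{high}$, where $Z\ge 1$. Your version is in fact slightly more careful than the paper's (stopping the process before claiming the supermartingale property, and noting that the argument as written yields $\le$ rather than the stated strict inequality), but the substance is identical.
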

\begin{proof}
	Let~$Z_t:= \exp(X_t - 2X_0\cdot e^{\bar{\mu}\cdot t})$ for~$t \ge 0$. A computation similar to the one carried out in the proof of Lemma~\ref{lem_first_super}, this time using Lemma~\ref{lem_new_super}, shows that~$(Z_t)$ is a supermartingale. Then, the optional stopping theorem gives
	\[\exp(-X_0) = Z_0 \ge \mathbb{E}[Z_{\mathsf{t}_\mathrm{high}}\cdot \mathds{1}\{\mathsf{t}_\mathrm{high} < \infty\}] \ge \mathbb{P}(\mathsf{t}_\mathrm{high} < \infty).\]	
\end{proof}

Recall that~$\ell_m(G)$ denotes the number of loops of length at most~$m$ in~$G$.
\begin{proposition}\label{prop_with_eps2}
	There exists~$t_0 > 0$ and~$\varepsilon_2 > 0$ such that if~$X_0 = X(G_0,\Psi_0) \le \varepsilon_2\cdot n$, then
	\begin{equation*}
		\mathbb{P}\left(\{X_{t_0} \ge  X_0\}\cup \{\ell_h(G_t) > \varepsilon_2 \cdot n \text{ for some } t \le t_0\}  \right) \ge 1-\exp(-X_0)-\exp(-\delta X_0),
	\end{equation*}
	where~$\delta$ is given in Lemma~\ref{lem_derivative}.
\end{proposition}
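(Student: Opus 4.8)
The plan is to combine the two supermartingale estimates already in hand --- Lemma~\ref{lem_first_super}, which keeps $X$ above $\tfrac{X_0}{2}e^{\mu t/4}$ as long as $\Lambda(G_t,\Psi_t)$ stays above the threshold of Lemma~\ref{lem_derivative}, and Lemma~\ref{lem_neww_super}, which keeps $X$ from growing faster than $2X_0 e^{\bar\mu t}$ --- with a purely deterministic count showing that, as long as there are few herds and few short loops, $\Lambda(G_t,\Psi_t)$ does indeed stay above that threshold. Concretely, I would take $\varepsilon_1,\delta$ as in Lemma~\ref{lem_derivative} and $\bar\mu$ as in Lemma~\ref{lem_new_super}, and set $t_0:=\tfrac{4\ln 2}{\mu}$, so that $e^{\mu t_0/4}=2$; note $t_0$ depends only on $d,\lambda,\mathsf{v},h$. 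Recalling $\mathfrak{t}_\mathrm{low},\mathsf{t}_\mathrm{high},\mathfrak{t}_\Lambda$ from~\eqref{eq_frak_low},~\eqref{eq_frak_high} and Lemma~\ref{lem_first_super}, the heart of the argument is the claim that, for $\varepsilon_2>0$ small enough, on the event
\[ \mathcal{E}:=\{\mathsf{t}_\mathrm{high}>t_0\}\cap\bigl\{\ell_h(G_t)\le\varepsilon_2 n\ \text{for all}\ t\le t_0\bigr\} \]
one has $\mathfrak{t}_\Lambda>t_0$.

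To prove this claim I would observe that an edge of $G_t$ is excluded from $\Lambda(G_t,\Psi_t)$ only if it is contained in $\bigcup_i B^i_t$, or one of its endpoints lies within graph distance $2h$ of $\bigcup_i B^i_t$, or one of its endpoints lies within distance $2h$ of a short loop (so that the corresponding $2h$-ball fails to be a tree). Since $G_t$ is $d$-regular, every ball of radius $2h$ has at most $\mathsf{C}_h$ vertices, and each herd $B^i_t$ also has at most $\mathsf{C}_h$ vertices; hence the first two categories contribute at most $C(h)\,|\mathcal{J}_t|$ edges and the third at most $C(h)\,\ell_h(G_t)$ edges, for a constant $C(h)$. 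On $\mathcal{E}$, for every $t\le t_0$ we have the a priori bound $X_t<2X_0 e^{\bar\mu t_0}\le 2\varepsilon_2 n\,e^{\bar\mu t_0}$ (from $\mathsf{t}_\mathrm{high}>t_0$ and $X_0\le\varepsilon_2 n$), so $|\mathcal{J}_t|\le X_t/f_\mathrm{min}$, together with $\ell_h(G_t)\le\varepsilon_2 n$. Thus the number of excluded edges is at most $K\varepsilon_2 n$ for a constant $K=K(d,\lambda,\mathsf{v},h)$, and choosing $\varepsilon_2\le\min\{\varepsilon_1,\ \varepsilon_1 d/(2K)\}$ makes this $\le\varepsilon_1\tfrac{dn}{2}$ for all $t\le t_0$; equivalently $|\Lambda(G_t,\Psi_t)|\ge(1-\varepsilon_1)\tfrac{dn}{2}$ throughout $[0,t_0]$, which is precisely $\mathfrak{t}_\Lambda>t_0$.

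It then remains to assemble the estimates. On $\mathcal{E}\cap\{\mathfrak{t}_\mathrm{low}>t_0\}$ we have $\mathfrak{t}_\Lambda\wedge\mathfrak{t}_\mathrm{low}>t_0$, and $t_0<\mathfrak{t}_\mathrm{low}$ forces $X_{t_0}>\tfrac{X_0}{2}e^{\mu t_0/4}=X_0$; contrapositively, on the event $\{X_{t_0}<X_0\}\cap\{\ell_h(G_t)\le\varepsilon_2 n\ \forall\,t\le t_0\}$, either $\mathsf{t}_\mathrm{high}\le t_0$, or $\mathcal{E}$ holds, hence $\mathfrak{t}_\Lambda>t_0$ and $X_{t_0}<X_0$ forces $\mathfrak{t}_\mathrm{low}\le t_0<\mathfrak{t}_\Lambda$, so
\[ \{X_{t_0}<X_0\}\cap\bigl\{\ell_h(G_t)\le\varepsilon_2 n\ \forall\, t\le t_0\bigr\}\ \subseteq\ \{\mathsf{t}_\mathrm{high}<\infty\}\ \cup\ \{\mathfrak{t}_\mathrm{low}<\infty,\ \mathfrak{t}_\mathrm{low}<\mathfrak{t}_\Lambda\}. \]
By Lemmas~\ref{lem_neww_super} and~\ref{lem_first_super} the right-hand side has probability at most $\exp(-X_0)+\exp(-\delta X_0/2)$; since Lemma~\ref{lem_derivative}, and hence Lemma~\ref{lem_first_super}, stays valid with $\delta$ replaced by $\delta/2$ (the $\mathscr{E}$-error terms only shrink), I would take this halved value to be the constant $\delta$ of the statement, which yields the bound $1-\exp(-X_0)-\exp(-\delta X_0)$. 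The step I expect to be the main obstacle is the edge count of the middle paragraph: making the three contributions to the excluded edges explicit and, crucially, uniform over $t\in[0,t_0]$ --- this is exactly where the $d$-regularity bound on ball sizes, the bound $|B^i_t|\le\mathsf{C}_h$ on herd sizes, the loop-counting estimate for balls failing to be trees, and the a priori control $|\mathcal{J}_t|\le X_t/f_\mathrm{min}$ (available only because we intersect with $\{\mathsf{t}_\mathrm{high}>t_0\}$) all have to be combined; everything downstream is routine bookkeeping with the already-built supermartingales.
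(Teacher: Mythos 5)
Your proposal is correct and follows essentially the same route as the paper: the same deterministic edge count showing that few loops plus few herds (via the a priori bound $X_t<2X_0e^{\bar\mu t_0}$ before $\mathsf{t}_\mathrm{high}$) forces $\Lambda(G_t,\Psi_t)>(1-\varepsilon_1)\tfrac{dn}{2}$, hence $\mathfrak{t}_\Lambda\ge\mathfrak{t}_\mathrm{high}\wedge\mathfrak{t}_\mathrm{loop}$ on $[0,t_0]$, followed by the same assembly of Lemmas~\ref{lem_first_super} and~\ref{lem_neww_super}. Your explicit handling of the $\exp(-\delta X_0/2)$ versus $\exp(-\delta X_0)$ discrepancy (by shrinking $\delta$, which Lemma~\ref{lem_derivative} permits) is a legitimate fix of a factor the paper's own proof glosses over.
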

\begin{proof}
	Recall the constant~$\varepsilon_1$ given in Lemma~\ref{lem_derivative}. We start by choosing~$\varepsilon_1'$ small enough that, if~$(G,\Psi)$ is such that~$\ell_h(G) \le \varepsilon_1'\cdot n$ and~$X(G,\Psi) \le \varepsilon_1'\cdot n$, then~$\Lambda(G,\Psi) > (1-\varepsilon_1)\cdot \frac{dn}{2}$. This is possible, since the number of edges of~$G$ that are not in~$\Lambda(G,\Psi)$ is smaller than~$C(\ell_h(G) + |\mathcal{J}|)\le C(\ell_h(G) + \frac{1}{f_\mathrm{min}}\cdot X)$, where~$C$ is a constant that depends only on~$d,\lambda,\mathsf{v},h$. 

	Next, choose~$t_0$ large enough that
	\[\tfrac12 \cdot \exp(\tfrac{\mu}{4}\cdot t_0) > 1,\]
	and choose~$\varepsilon_2$ small enough  that
	\[2\varepsilon_2 \cdot \exp({\bar{\mu}\cdot t_0}) < \varepsilon_1'.\]

	Define the stopping time
	\[\mathfrak{t}_\mathrm{loop}:= \inf\{t \ge 0:\;\ell_h(G_t) \ge \varepsilon_2\cdot n\}.\]
	Recalling the definition of~$\mathfrak{t}_\Lambda$ in~\eqref{eq_frak_lambda} and the definition of~$\mathfrak{t}_\mathrm{high}$ in~\eqref{eq_frak_high}, we have
	\[\mathfrak{t}_\Lambda \ge \mathfrak{t}_\mathrm{high} \wedge \mathfrak{t}_\mathrm{loop}.\]
	Recalling the definition of~$\mathfrak{t}_\mathrm{low}$ in~\eqref{eq_frak_low}, by the choice of~$t_0$ we have
	\[\{\mathfrak{t}_\mathrm{low} > t_0\} \subseteq \{X_{t_0} \ge \tfrac12 X_0\cdot \exp(\tfrac{\mu}{4}\cdot t_0)\}\subseteq \{ X_{t_0} \ge X_0\}.\]
	We can then bound
	\begin{align*}
		\mathbb{P}(\{X_{t_0} < X_0\} \cap \{\mathfrak{t}_\mathrm{loop} > t_0\})&\le \mathbb{P}(\mathfrak{t}_\mathrm{low} \le t_0,\; \mathfrak{t}_\mathrm{loop} > t_0)\\
		&\le \mathbb{P}(\mathfrak{t}_\mathrm{low} \le t_0,\; \mathfrak{t}_\mathrm{high} > t_0,\;\mathfrak{t}_\mathrm{loop} > t_0) + \mathbb{P}(\mathfrak{t}_\mathrm{high} < \infty)\\
		&\le\mathbb{P}(\mathfrak{t}_\mathrm{low} \le t_0,\; \mathfrak{t}_\Lambda > t_0) + \mathbb{P}(\mathfrak{t}_\mathrm{high} < \infty).
	\end{align*}
	Finally, the two probabilities on the right-hand side are bounded using Lemmas~\ref{lem_first_super} and~\ref{lem_neww_super}.
\end{proof}

\begin{proof}[Proof of Theorem~\ref{thm_main}]
	We fix a constant~$\varepsilon_3 > 0$ with
	\[\varepsilon_3 < \min\left(\frac{\varepsilon_2}{2},\; \frac{f_\mathrm{min}}{2\mathsf{C}_{2h}}\right),\] where~$\varepsilon_2$ is the constant of Proposition~\ref{prop_with_eps2}.  Given~$G \in \mathcal{G}_n$ and a set~$\xi$ of vertices of~$G$, we say that the pair~$(G,\xi)$ is \textit{good} if there exists an indexed set~$\Psi = (\mathcal{J},\{(B^i,\beta^i)\})$ of~$h$-herds of~$G$ with the property that~$\beta^i \subset \xi$ for all~$i \in \mathcal{J}$ and~$X(G,\Psi) \ge \varepsilon_3\cdot n$. We now prove some claims involving this definition.
	
	\begin{claim}\label{cl_round} If~$n> \frac{2f_\mathrm{max}}{\varepsilon_2}$ and~$(G,\xi)$ is good, then there exists an indexed set~$\Psi = (\mathcal{J},\{(B^i,\beta^i)\})$ of~$h$-herds of~$G$ with~$\beta^i \subset \xi$ for all~$i \in \mathcal{J}$ and~$X(G,\Psi) \in [\varepsilon_3 \cdot n,\; \varepsilon_2\cdot n]$.
	\end{claim}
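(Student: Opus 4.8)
The plan is to take the indexed set of $h$-herds supplied by the goodness of $(G,\xi)$ and prune it, deleting $h$-herds one at a time, until the value of $X$ lands inside the window $[\varepsilon_3 n,\varepsilon_2 n]$. This works because a single deletion can only decrease $X$ by at most $f_{\mathrm{max}}$, while the hypotheses $n > 2f_{\mathrm{max}}/\varepsilon_2$ and $\varepsilon_3 < \varepsilon_2/2$ are exactly what makes the window strictly wider than $f_{\mathrm{max}}$, so a monotone sequence of $X$-values cannot step over it.

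First I would fix $\Psi = (\mathcal{J},\{(B^i,\beta^i)\})$ with $\beta^i \subset \xi$ for all $i \in \mathcal{J}$ and $X(G,\Psi) \ge \varepsilon_3 n$, as provided by the definition of goodness. If $X(G,\Psi) \le \varepsilon_2 n$ there is nothing to do, so assume $X(G,\Psi) > \varepsilon_2 n$. I would then enumerate $\mathcal{J} = \{i_1,\dots,i_k\}$ and let $\Psi_j$ be $\Psi$ with the $h$-herds indexed by $i_1,\dots,i_j$ removed (so $\Psi_0 = \Psi$ and $\Psi_k$ is empty). Each $\Psi_j$ is again an admissible indexed set of $h$-herds of $G$ (particle sets still contained in $\xi$, the $B^i$ still pairwise disjoint subgraphs of $G$), and the relevant quantitative facts are $X(G,\Psi_0) > \varepsilon_2 n$, $X(G,\Psi_k) = 0$, and, using the definition of $X$ and $f \le f_{\mathrm{max}}$, $0 \le X(G,\Psi_j) - X(G,\Psi_{j+1}) = f(B^{i_{j+1}},\beta^{i_{j+1}}) \le f_{\mathrm{max}}$ for every $j$.

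Finally I would run a discrete intermediate-value argument: let $j^*$ be the smallest index with $X(G,\Psi_{j^*}) \le \varepsilon_2 n$ (this exists and is $\ge 1$ since $X(G,\Psi_0) > \varepsilon_2 n \ge 0 = X(G,\Psi_k)$). Then $X(G,\Psi_{j^*-1}) > \varepsilon_2 n$, hence $X(G,\Psi_{j^*}) \ge \varepsilon_2 n - f_{\mathrm{max}}$; since $n > 2f_{\mathrm{max}}/\varepsilon_2$ gives $f_{\mathrm{max}} < \tfrac{\varepsilon_2}{2} n$ and $\varepsilon_3 < \tfrac{\varepsilon_2}{2}$, this lower bound is $> \tfrac{\varepsilon_2}{2} n > \varepsilon_3 n$, while $X(G,\Psi_{j^*}) \le \varepsilon_2 n$ by the choice of $j^*$. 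Thus $\Psi_{j^*}$ has $X(G,\Psi_{j^*}) \in [\varepsilon_3 n,\varepsilon_2 n]$ (in particular it is nonempty) and is the required indexed set. There is no genuine obstacle here; the only point needing attention is the bookkeeping that the window $[\varepsilon_3 n,\varepsilon_2 n]$ is, under the stated hypotheses, strictly wider than the largest possible single-step decrement $f_{\mathrm{max}}$, so that the monotone finite sequence $(X(G,\Psi_j))_{j=0}^{k}$ cannot skip past it.
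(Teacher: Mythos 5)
Your proof is correct and follows essentially the same route as the paper: take the indexed set of $h$-herds witnessing goodness and delete herds one at a time, using that each deletion decreases $X$ by at most $f_\mathrm{max} < \tfrac{\varepsilon_2}{2}\cdot n$ together with $\varepsilon_3 < \tfrac{\varepsilon_2}{2}$ so the monotone sequence of $X$-values cannot jump over the window $[\varepsilon_3\cdot n,\,\varepsilon_2\cdot n]$. Your write-up merely makes the paper's one-line "remove herds one by one" argument explicit via a discrete intermediate-value formulation.
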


	\begin{proof}
		Let~$\Psi$ be an indexed set of~$h$-herds of~$G$ obtained from the definition of~$(G,\xi)$ being good. We have~$X(G,\Psi) \ge \varepsilon_3 \cdot n$, and 
in case~$X(G,\Psi) > \varepsilon_2\cdot n$, we can remove~$h$-herds  from~$\Psi$ one by one until the value of~$X$ drops below~$\varepsilon_2 \cdot n$ (the removal of an~$h$-herd decreases the value of~$X$ by at most~$f_\mathrm{max}$, so since~$\tfrac{\varepsilon_2}{2} \cdot n > f_\mathrm{max}$, we can indeed end up with~$X$ between~$[\tfrac{\varepsilon_2}{2}\cdot n,\;\varepsilon_2\cdot n]$).
	\end{proof}

	\begin{claim}\label{cl_to_markov}
		Assume that~$n > \frac{2f_\mathrm{max}}{\varepsilon_2}$,~$(G,\xi)$ is good,~$(G_t)$ is a switching graph with~$G_0 = G$ and~$(\xi_t)$ is the contact process on this graph with~$\xi_0 = \xi$. Then,
		\[
			\mathbb{P}(\{(G_{t_0},\xi_{t_0}) \text{ is good}\} \cup \{\ell_h(G_t) > \varepsilon_2 \cdot n \text{ for some } t \le t_0\}) > 1 - \exp(-\varepsilon_3 \cdot n) - \exp(-\delta \cdot \varepsilon_3\cdot n).
		\]
	\end{claim}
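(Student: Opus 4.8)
The plan is to obtain the claim as a direct consequence of Claim~\ref{cl_round}, a monotone coupling, and Proposition~\ref{prop_with_eps2}. Since $(G,\xi)$ is good and $n>2f_\mathrm{max}/\varepsilon_2$, Claim~\ref{cl_round} supplies an indexed set of $h$-herds $\Psi=(\mathcal{J},\{(B^i,\beta^i):i\in\mathcal{J}\})$ of $G$ with $\beta^i\subset\xi$ for every $i\in\mathcal{J}$ and $X(G,\Psi)\in[\varepsilon_3 n,\varepsilon_2 n]$. On an enlargement of the given probability space I would then run the embedded $h$-herds process $(G_t,\Psi_t)_{t\ge 0}$ started from $(G_0,\Psi_0)=(G,\Psi)$, with the same switching graph $(G_t)$ driving both marginals, and with the births and deaths of $(\Psi_t)$ read off from the same Poisson graphical construction (rate-one recovery clocks at vertices, rate-$\lambda$ transmission clocks at half-edges) that defines $(\xi_t)$; note that Proposition~\ref{prop_with_eps2} is insensitive to the (deterministic) initial graph, since its proof runs through the martingale estimates of Lemmas~\ref{lem_first_super} and~\ref{lem_neww_super} only.

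The essential feature of this coupling is the inclusion
\[
\bigcup_{i\in\mathcal{J}_t}\beta^i_t\;\subseteq\;\xi_t\qquad\text{for all }t\ge 0,
\]
which I would establish by induction over the successive jump times of $(G_t,\Psi_t,\xi_t)$. A contact death in herd $i$ at $u$ fires only when the recovery clock at $u\in\beta^i_t\subseteq\xi_t$ rings, removing $u$ from both $\beta^i_t$ and $\xi_t$ (the disjointness $B^i_t\cap B^j_t=\varnothing$ guarantees $u$ lies in at most one herd); a contact birth in herd $i$ at $v$ fires when a transmission clock of a half-edge of some $u\in\beta^i_t\subseteq\xi_t$ whose edge belongs to $B^i_t$ rings, which in $(\xi_t)$ simultaneously adds $v$; and a switch alters the herds only through the operations $\mathcal{T}_h(\cdot,\mathsf{m},x^j)$, $\mathcal{T}'_h(\cdot,\mathsf{m})$, or outright deletion, each of which replaces a herd's particle set by a subset of itself ($\beta\cap D^1$, $\beta\cap D^2$, or $\beta$ itself), while $\xi_t$ is untouched. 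Hence the inclusion is preserved under all three transition types.

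With the coupling in place, apply Proposition~\ref{prop_with_eps2} to $(G_t,\Psi_t)$ with $X_0=X(G_0,\Psi_0)\in[\varepsilon_3 n,\varepsilon_2 n]$, the hypothesis $X_0\le\varepsilon_2 n$ being satisfied. On the event $\{X_{t_0}\ge X_0\}$ one has $X(G_{t_0},\Psi_{t_0})\ge X_0\ge\varepsilon_3 n$, and since $\beta^i_{t_0}\subset\xi_{t_0}$ for every $i\in\mathcal{J}_{t_0}$ by the displayed inclusion, the pair $(G_{t_0},\xi_{t_0})$ is good. Thus the event in Proposition~\ref{prop_with_eps2} is contained in the event in Claim~\ref{cl_to_markov}; using $X_0\ge\varepsilon_3 n$ to bound $\exp(-X_0)\le\exp(-\varepsilon_3 n)$ and $\exp(-\delta X_0)\le\exp(-\delta\varepsilon_3 n)$ then yields the asserted lower bound on the probability.

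The one place that demands genuine care is the verification of the displayed inclusion across switch transitions, since there the herds are physically relocated inside $G_t$; but, as observed above, every switch operation acting on $(\Psi_t)$ can only delete herds or shrink a herd's particle set, so the inclusion survives automatically, and the remainder is routine bookkeeping with the graphical construction, entirely parallel to the monotone contact-process coupling recalled in Section~\ref{sec_prelim}.
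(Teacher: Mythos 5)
Your proposal is correct and follows essentially the same route as the paper: fix a witness $\Psi$ via Claim~\ref{cl_round}, couple the embedded $h$-herds process with the contact process on the same switching graph so that $\bigcup_i \beta^i_t \subseteq \xi_t$ for all $t$, and then invoke Proposition~\ref{prop_with_eps2} together with $X_0 \ge \varepsilon_3 n$. The paper leaves the coupling construction to the reader ("it is easy to see"), whereas you spell out why the inclusion is preserved across deaths, births, and switches; this is a correct and welcome elaboration rather than a different argument.
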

	\begin{proof}
		Fix an indexed set~$\Psi = (\mathcal{J},\{(B^i, \beta^i)\})$ of~$h$-herds in~$G$ as given by Claim~\ref{cl_round}.  It is easy to see that we can construct in a single probability space a process~$(G_t,\xi_t, {\Psi}_t)_{t \ge 0}$, where
		\begin{itemize}
			\item $(G_t)$ is a switching graph with~$G_0 = G$;
			\item $(\xi_t)$ is a contact process on~$(G_t)$ with~$\xi_0 = \xi$;
			\item $({\Psi}_t)$ is an~$h$-herds process on~$(G_t)$ with~${\Psi}_0 = {\Psi}$;
		\end{itemize}
		and moreover,
		\[\bigcup_{i \in {\mathcal{J}}_t} \beta^i_t \subset \xi_t  \text{ for all } t \ge 0, \quad \text{where } {\Psi}_t = ({\mathcal{J}_t},\{(B^i_t,\beta^i_t):i \in {\mathcal{J}}_t\}).\]
Under this coupling we have
		\[\{X(G_{t_0},{\Psi}_{t_0}) \ge X(G_0,\Psi_0) \}\supset \{(G_{t_0},\xi_{t_0}) \text{ is good}\}. \]
		The statement of the claim now readily follows  from Proposition~\ref{prop_with_eps2}, which can be applied since~$X(G,\Psi) < \varepsilon_2 \cdot n$.
	\end{proof}

	\begin{claim}
		If~$\ell_h(G) < \frac{1}{2h\cdot \mathsf{C}_{h}}\cdot n$ and~$\xi$ is the set of all vertices of~$G$, then~$(G,\xi)$ is good.
	\end{claim}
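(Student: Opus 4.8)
The plan is to certify that $(G,\xi)$ is good by producing a large family of pairwise disjoint embedded $h$-herds of $G$. Since $\xi$ is the whole vertex set, the constraint $\beta^i\subset\xi$ is vacuous, so for each embedded $h$-herd $B^i$ we may simply take $\beta^i$ to be the full vertex set of $B^i$; then $f(B^i,\beta^i)\ge f_{\min}$ and $X(G,\Psi)=\sum_i f(B^i,\beta^i)\ge |\mathcal J|\cdot f_{\min}$. Because $\varepsilon_3<\frac{f_{\min}}{2\mathsf C_{2h}}$, it therefore suffices to exhibit at least $\frac{n}{2\mathsf C_{2h}}$ pairwise disjoint subgraphs of $G$, each isomorphic to an element of $\mathscr A_h$; in fact each isomorphic to $\mathcal B_\mathbb{T}(o,h)$ will do.

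First I would quarantine the short loops. Let $F$ be the set of edges of $G$ lying on a loop of length at most $h$; since each such loop uses at most $h$ edges, $|F|\le h\,\ell_h(G)<\frac{n}{2\mathsf C_h}$. Call a vertex $u$ \emph{clean} if no edge of $F$ lies within graph distance $h$ of $u$ in $G$. Since each of the at most $2|F|$ endpoints of $F$-edges has at most $|\mathcal B_\mathbb{T}(o,h)|$ vertices within distance $h$ (the graph being $d$-regular), the number of non-clean vertices is at most $2|F|\,|\mathcal B_\mathbb{T}(o,h)|<\frac{|\mathcal B_\mathbb{T}(o,h)|}{\mathsf C_h}\,n$; since $\mathsf C_h=|\mathcal B_\mathbb{T}(o,2h)|$ is exponentially larger than $|\mathcal B_\mathbb{T}(o,h)|$, this is far below $n/2$, so at least $n/2$ vertices are clean. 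For a clean vertex $u$ one then argues that $\mathcal B_G(u,h)$ contains an embedded $h$-herd isomorphic to $\mathcal B_\mathbb{T}(o,h)$: once the short-loop edges are removed the local geometry at $u$ is tree-like up to scale $h$, so one can grow a breadth-first subtree out of $u$, truncating a branch as soon as continuing it would close a loop, and check that the resulting shape lies in (an isomorphism class of) $\mathscr A_h$ — this is where the closure of $\mathscr A_h$ under $h$-splitting, the degree property \eqref{eq_deg_one_or_d} and the distance bound \eqref{eq_dist_less_h} are used. Call the resulting herd $B(u)$, and note $B(u)\subseteq\mathcal B_G(u,h)$.

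Next I would run a greedy packing: process the clean vertices, each time selecting one at graph distance more than $2h$ from all previously selected clean vertices and adding $B(u)$ to the family $\Psi$. Each selection forbids only the at most $\mathsf C_h\le\mathsf C_{2h}$ vertices within distance $2h$ of it, so since at least $n/2$ vertices are clean the process yields at least $\frac{n/2}{\mathsf C_h}\ge\frac{n}{2\mathsf C_{2h}}$ embedded $h$-herds; moreover $\mathrm{dist}_G(u,u')>2h$ forces $B(u)\cap B(u')=\varnothing$, because $B(u)\subseteq\mathcal B_G(u,h)$ and $B(u')\subseteq\mathcal B_G(u',h)$ and these two balls are disjoint. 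Taking this family as $\Psi$ gives $X(G,\Psi)\ge\frac{n}{2\mathsf C_{2h}}\,f_{\min}>\varepsilon_3 n$, so $(G,\xi)$ is good.

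The step I expect to be the main obstacle — and the only one that is not routine bookkeeping — is the middle one: turning ``few short loops near $u$'' into ``there is an honest subtree of diameter $2h$ with all internal degrees $d$ anchored near $u$'', i.e.\ extracting an element of (an isomorphism class of) $\mathscr A_h$ rather than just some tree. One must control, via the loop hypothesis, exactly where the breadth-first growth is forced to prune, and verify that the pruned tree is again reachable from $\mathcal B_\mathbb{T}(o,h)$ by $h$-splittings; this is precisely the point at which the value of the loop-length threshold in the hypothesis is used. Everything else — the reduction exploiting that $\xi$ is the whole vertex set, the count of clean vertices, and the greedy disjointness argument — is elementary, and the bookkeeping of constants is exactly what forces $\varepsilon_3<\frac{f_{\min}}{2\mathsf C_{2h}}$.
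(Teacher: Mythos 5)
Your overall strategy is the same as the paper's: use the loop hypothesis to find at least $n/2$ vertices with tree-like radius-$h$ neighbourhoods, greedily extract a $\tfrac{1}{\mathsf{C}_h}$-fraction of them whose radius-$h$ balls are pairwise disjoint, place one $h$-herd in each ball, and conclude $X(G,\Psi)\ge f_{\min}\cdot\frac{n}{2\mathsf{C}_h}\ge\varepsilon_3 n$. The counting of ``clean'' vertices, the greedy packing, and the final bookkeeping all match the paper and are fine (the paper takes $\beta^x=\{x\}$ rather than the full vertex set of $B^x$, but either works since $f\ge f_{\min}$).

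The gap is exactly the step you flag as the ``main obstacle,'' and the resolution you sketch for it would not work. If you grow a breadth-first subtree out of $u$ and \emph{truncate a branch as soon as continuing it would close a loop}, the resulting tree has an internal vertex of degree strictly between $1$ and $d$ at each pruned site, and typically diameter less than $2h$ along the pruned branches; by \eqref{eq_deg_one_or_d} such a tree is \emph{not} in $\mathscr{A}_h$, so the pair you build is not an embedded $h$-herd and $f$ is not even defined on it. No pruning argument is needed: the correct move is to demand that $\mathcal{B}_G(u,h)$ contain \emph{no} cycle at all, in which case $d$-regularity forces $\mathcal{B}_G(u,h)\cong\mathcal{B}_{\mathbb{T}}(o,h)$, which lies in $\mathscr{A}_h$ by definition, and you take that whole ball as the herd. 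A ball of radius $h$ that contains a cycle contains one of length at most $2h+1$, and each such cycle can spoil at most (length)$\times|\mathcal{B}_{\mathbb{T}}(o,h)|\le 2h\cdot\mathsf{C}_h$ balls, so the hypothesis on the number of short loops leaves at least $n/2$ vertices whose ball is an honest copy of $\mathcal{B}_{\mathbb{T}}(o,h)$ --- this is precisely the paper's first step. (Your notion of ``clean'' --- no edge of a loop of length $\le h$ within distance $h$ --- does not imply the ball is a tree, since a cycle of length between $h+1$ and $2h+1$ can sit entirely inside $\mathcal{B}_G(u,h)$; this is why you were driven to the pruning idea. Define cleanliness via absence of cycles in the ball and the difficulty disappears.)
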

	\begin{proof}
		Given a loop in~$G$ with length at most~$h$, there are fewer than~$h\cdot \mathsf{C}_{h}$ vertices~$x$ in~$G$ so that~$\mathcal{B}_G(x,h)$ intersects this loop. Therefore, if~$\ell_h(G) < \frac{1}{2h\cdot \mathsf{C}_{h}}\cdot n$, then there exists a set of vertices~$V_0$ of~$G$ with~$|V_0| \ge \frac{n}{2}$ and so that~$\mathcal{B}_G(x,h)$ is isomorphic to~$\mathcal{B}_\mathbb{T}(o,h)$ for all~$x \in V_0$. Next, given~$x \in V_0$, there are fewer than~$\mathsf{C}_{h}$ vertices~$y \in V_0$ such that~$\mathcal{B}_G(x,h) \cap \mathcal{B}_G(y,h) \neq \varnothing$. It is then an easy combinatorial exercise to show that we can obtain a set~$V_1 \subset V_0$ with~$|V_1| \ge \frac{1}{\mathsf{C}_{h}}\cdot |V_0| \ge \frac{n}{2\mathsf{C}_{h}}$ and so that the balls~$\{\mathcal{B}_G(x,h):x \in V_1\}$ are all disjoint. We now let~$\Psi = (\mathcal{J},\{(B^i,\beta^i): i \in \mathcal{J}\})$, where
		\[\mathcal{J} = V_1\quad \text{and}\quad B^x = \mathcal{B}_G(x,h),\; \beta^x = \{x\} \text{ for all } x \in V_1.\]
		We then have~$X(G,\Psi) \ge f_\mathrm{min}\cdot |V_1| \ge \frac{f_\mathrm{min}}{2\mathsf{C}_{h}}\cdot n \ge \varepsilon_3 \cdot n$, so~$(G,\xi)$ is good.
	\end{proof}
	We are now ready to conclude. Assume that~$(G_t)$ is a switching graph started from the stationary distribution and~$(\xi_t)$ is a contact process on~$(G_t)$ started from full  occupancy. For any~$k \in \mathbb{N}$ we bound
	\begin{align*}
		\mathbb{P}(\xi_{k\cdot t_0} = \varnothing)& \le \mathbb{P}\left(\ell_h(G_0) \ge \tfrac{n}{2h\cdot \mathsf{C}_{h}}\right) + \mathbb{P}\left(\xi_{k\cdot t_0} = \varnothing,\;\ell_h(G_0) < \tfrac{n}{2h\cdot \mathsf{C}_{h}}\right) \\
		&\le \mathbb{P}\left(\ell_h(G_0) \ge \tfrac{n}{2h\cdot \mathsf{C}_{h}}\right)+\mathbb{P}\left(\exists t\le k\cdot t_0:\; \ell_h(G_t) > \varepsilon_2\cdot n\right)\\
		&\hspace{5cm} + k \left(\exp(-\varepsilon_3\cdot n) + \exp(-\delta \cdot \varepsilon_3\cdot n)\right),
	\end{align*}
	where in the second inequality we used a union bound combined with Claim~\ref{cl_to_markov} and the Markov property. Using Proposition~\ref{prop_no_loops}, the desired statement now follows by taking~$k = \exp(c'\cdot n)$, with~$c'$ sufficiently small.
\end{proof}

\section{Strict decrease of critical value: Proof of Theorem~\ref{thm_strict}}\label{sec_freeze}
In this final section we give the proof of Theorem~\ref{thm_strict}, showing that the critical value of the herds process~$(\Xi_t)_{t \ge 0}$ is strictly smaller than~$\lambda_c(\mathbb{T})$, the threshold between extinction and (global) survival for the contact process on the (static)~$d$-regular tree. The contents of this section are independent of Sections~\ref{s_hherds} and~\ref{s_embed}.

\subsection{Auxiliary processes: marked particles and freezing}\label{ss_marked} We now define two variants of the herds process.  The first one, denoted~$(\Upsilon_t)_{t \ge 0}$, incorporates marked particles, which are deemed higher than normal particles in a hierarchical relation governing the births. The second one, denoted~$(\dbh_t)_{t \ge 0}$, also includes marked particles, and on top of that it includes a mechanism called \textit{freezing} of herds.\medskip

\noindent \textbf{Marked particles.} We now introduce a dynamics similar to the herds process, with the difference that each particle is either marked or normal. Thus, a state of the process is denoted by
\[\Upsilon = (\mathcal{J},\{(\eta^i, \xi^i):i \in \mathcal{J}\}),\]
where~$\mathcal{J} \subset \mathbb{N}$ is the set of herds and for each~$i\in \mathcal{J}$,~$\eta^i$ is the set of all particles in herd~$i$ and~$\xi^i \subset \eta^i$ is the set of marked particles in herd~$i$. 

The rules of evolution will guarantee that the following \textit{exclusion rule} is always satisfied: \textit{for each~$u \in \mathbb{T}$, there is at most one herd in which there is a marked particle at~$u$}. Using this property, we will conclude that the process~$(\cup_{i \in \mathcal{J}_t} \xi^i_t)_{t \ge 0}$ is a contact process on~$\mathbb{T}$. Hence, the purpose of introducing this process is to obtain a coupling between the herds process and the contact process on~$\mathbb{T}$ with the same infection rate, in which the former dominates the latter.

Let us now explain the dynamics of~$(\Upsilon_t)_{t\ge 0}$. As in the herds process, all particles (marked or normal) die with rate one. Active edges in each herd split with rate~$\mathsf{v}$, with the following points to note:
\begin{itemize}
	\item as before, an edge is deemed active in a herd if it is a bridge between two non-empty portions of~$\mathbb{T}$ in that herd, where `non-empty' means containing particles (marked or normal);
	\item the effect of an edge splitting is just as before: the herd in question is replaced by two new herds, each containing the particles (marked or normal) that were on each of the sides of the splitting edge.
\end{itemize}

The birth rules of particles are a little more involved. All particles, marked or normal, try to give birth at each neighboring position in their herd with rate~$\lambda$. Then, we can summarize the outcome  of each attempt by the following:
\begin{itemize} \item normal particles try to create normal particles, but are not allowed to do so on top of existing marked particles; 
		\item marked particles try to create marked particles (even overwriting existing particles), but if this creation would mean violating the exclusion rule, then they create a normal particle instead.
\end{itemize}
More formally, assume that a particle at~$u$ attempts to give birth at a neighboring site~$v \sim u$ in herd~$i$; then, the outcome of the attempt is given by the following table, where the uncolored border cells represent the different cases concerning~$u$ and~$v$, and the colored cells express the outcome of the attempt.
\begin{table}[H]
	\centering
	\begin{tabular}{cc|c|c|}
		\hhline{~~--} & & $u \in \xi^i$ & $u \in \eta^i \backslash\xi^i$\\[.05cm]\hhline{--==}
		\multicolumn{1}{|l|}{\multirow{3}{*}{$v \notin \cup_{j \neq i}\xi^j$ }} &  $v \in \xi^i$ & \cellcolor[HTML]{FFCFCF}no effect &\cellcolor[HTML]{FFCFCF}no effect  \\
		 \hhline{~|-|-|-} 
\multicolumn{1}{|l|}{} & $v \in \eta^i \backslash \xi^i$ &\cellcolor[HTML]{BEFFBE}effect: $v$ enters $\xi^i$ &\cellcolor[HTML]{FFCFCF}no effect  \\
 \hhline{~|-|-|-} 
		\multicolumn{1}{|l|}{} & $v \notin \eta^i$  & \cellcolor[HTML]{BEFFBE}effect: $v$ enters~$\xi^i$ & \cellcolor[HTML]{FFF2C2}effect: $v$ enters $\eta^i \backslash \xi^i$ \\ \hhline{====} 
		\multicolumn{1}{|l|}{\multirow{2}{*}{\parbox{1.9cm}{\vspace{-0.1cm}$v \in \cup_{j \neq i}\xi^j$ }}} & $v \in \eta^i \backslash \xi^i$ &  \cellcolor[HTML]{FFCFCF} no effect& \cellcolor[HTML]{FFCFCF} no effect \\  \hhline{~|-|-|-} \multicolumn{1}{|l|}{}                  & $v \notin \eta^i$ & \cellcolor[HTML]{FFF2C2}effect: $v$ enters~$\eta^i \backslash \xi^i$ & \cellcolor[HTML]{FFF2C2}effect: $v$ enters~$\eta^i \backslash \xi^i$ \\ \hline \end{tabular}
\end{table}

We now state our coupling result.
\begin{proposition}\label{prop_coupling}
	Let~$(\Upsilon_t)_{t \ge 0} = (\mathcal{J}_t,\{(\eta^i_t,\xi^i_t):i \in \mathcal{J}_t\})$ denote the herds process with marked particles with parameters~$\mathsf{v}$ and~$\lambda$. Then, the process~$(\cup_{i \in \mathcal{J}_t} \xi^i_t)_{t \ge 0}$ is a contact process on~$\mathbb{T}$ with parameter~$\lambda$, and~$(\mathcal{J}_t,\eta^i_t)_{t \ge 0}$ is a herds process with parameters~$\mathsf{v}$ and~$\lambda$.
\end{proposition}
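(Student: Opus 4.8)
The plan is to handle the two assertions separately, but both through the same device: a deterministic map from states of $(\Upsilon_t)$ to states of the target process that intertwines the generators, so that a lumpability argument applies. The preliminary — and most delicate — step is to establish the \emph{exclusion rule} as an almost-sure invariant, i.e.\ that the sets $\{\xi^i_t: i \in \mathcal{J}_t\}$ are pairwise disjoint, so that $\zeta_t := \bigcup_{i \in \mathcal{J}_t}\xi^i_t$ is a disjoint union. I would prove this by induction over the jumps of the chain (which is non-explosive, exactly as in observation~(a) of Section~\ref{sec_basic}), started from a single herd carrying a single marked particle. Deaths and splittings visibly preserve disjointness of the $\xi^i$; for births one consults the table case by case, the crucial point being that the two rows with $v \in \bigcup_{j \neq i}\xi^j$ never produce the outcome ``$v$ enters $\xi^i$'', so a new marked particle appears at $v$ only when $v$ carries no marked particle in any other herd.

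For the second assertion (that $(\mathcal{J}_t,\{\eta^i_t\})$ is a herds process), I would use the projection $\pi$ that forgets the marks, recording $(\mathcal{J},\{\eta^i\})$. One checks that $\pi$ sends every transition of $(\Upsilon_t)$ either to a legal transition of the herds process or to a no-op, with the correct rates: deaths map to herds-deaths (rate $1$ per particle, with deletion of emptied herds); splittings map to herds-splittings, since the notion of active edge coincides once marks are forgotten (``non-empty portion'' refers to the presence of particles of either kind, that is, to $\eta^i$); and a birth attempt by a particle at $u$ onto a neighbour $v$ either leaves $\eta^i$ unchanged (every row with $v \in \eta^i$, whether the outcome is ``no effect'' or ``$v$ enters $\xi^i$'') or replaces $\eta^i$ by $\eta^i \cup \{v\}$ (every row with $v \notin \eta^i$), the latter occurring at total rate $\lambda$ for each ordered pair $(u,v)$ with $u \in \eta^i$, $v \sim u$. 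Since these projected rates depend only on $\pi(\Upsilon)$, the chain $(\pi(\Upsilon_t))$ is Markov with exactly the herds generator of Section~\ref{sec_basic}.

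For the first assertion, granted the exclusion rule, I would compute the transition rates of $(\zeta_t)$ directly. A site $u \in \zeta_t$ leaves $\zeta$ precisely when the unique marked particle at $u$ is removed; inspecting the dynamics, a marked particle can disappear only by dying (no outcome in the table demotes a marked particle to normal, and splittings relocate but never destroy particles), so this happens at rate $1$. A site $v \notin \zeta_t$ enters $\zeta$ precisely when a transition installs a marked particle at $v$; the only transitions that ever create a marked particle are birth attempts by a marked particle at some neighbour $u \sim v$, and by the table (using $v \notin \bigcup_j \xi^j$) such an attempt always succeeds in putting $v$ into $\xi^i$. Hence $v$ becomes infected at rate $\lambda \cdot \#\{u \sim v: u \in \zeta_t\}$. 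These rates depend only on $\zeta_t$, so by lumpability $(\zeta_t)$ is Markov with the generator of the contact process on $\mathbb{T}$ with infection rate $\lambda$.

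The main obstacle is the bookkeeping in the two case analyses: one must verify, from the birth table, both that disjointness of the $\xi^i$ is never violated and that the induced infection rate into a healthy site is \emph{exactly} $\lambda$ per infected neighbour — no mechanism other than a marked birth can infect a site, and a marked birth onto a healthy site is never blocked. The ``overwriting'' clause requires a moment's care but causes no trouble, since overwriting a normal particle at $v$ by a marked one changes neither $\eta^i$ nor the disjointness of the $\xi^i$.
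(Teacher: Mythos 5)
Your proposal is correct and follows essentially the same route as the paper, which simply states that one should fix an arbitrary configuration, inspect the effect of each transition on the two marginals $\cup_i\xi^i$ and $(\mathcal{J},\{\eta^i\})$, and check that the induced rates match (leaving the details to the reader). Your write-up supplies exactly those details — in particular the inductive verification of the exclusion rule and the observation that a marked birth onto a site outside $\cup_j\xi^j$ always succeeds — and the case analysis against the table is accurate.
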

This is proved by fixing an arbitrary configuration~$(\mathcal{J},\{(\eta^i,\xi^i):i \in \mathcal{J}\})$ and inspecting the effects of each possible transition on the `marginals'~$\cup_{i \in \mathcal{J}} \xi^i$ and~$(\mathcal{J},\{\eta^i: i \in\mathcal{J}\})$, verifying that these effects match the jump mechanisms of the marginal dynamics, with the correct rates. The details are left to the reader.\medskip

\noindent\textbf{Freezing herds.} We now introduce the process~$(\dbh_t)_{t \ge 0}$, which is a modification of the herds process with marked particles,~$(\Upsilon_t)_{t \ge 0}$. It is quite easy to explain: it evolves exactly like~$(\Upsilon_{t \ge 0})_{t \ge 0}$, with the only difference that whenever a herd contains no marked particles, it becomes \textit{frozen}, meaning that it no longer evolves in any way: its particles no longer die or give birth, and its edges no longer split.

The idea of incorporating freezing to the contact process is given in~\cite{huang}, where particles of a contact process on a finite tree become frozen when they enter certain boundary vertices. There, this technique is used to produce a process that dominates the contact process from above. Here, due to independence properties of the herds process, we  use freezing in order to provide a \textit{lower} bound for the process~$(\Upsilon_t)_{t \ge 0}$. The key idea is contained in the following result.
\begin{lemma}\label{lem_expectation_more}
	Assume that the process~$(\dbh_t)_{t \ge 0}$ has parameters~$\mathsf{v}$ and~$\lambda$, and is started with a single herd with a single marked particle. Assume that the expected number of herds that become frozen in this process is  larger than one. Then, the herds process~$(\Xi_t)_{t \ge 0}$ with parameters~$\mathsf{v}$ and~$\lambda$ survives, that is,~$\lambda > \bar{\lambda}(\mathsf{v})$.
\label{lem_freezing_exp}\end{lemma}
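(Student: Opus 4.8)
I would deduce survival of $(\Xi_t)$ from $(\dbh_t)$ by exhibiting a supercritical Galton--Watson tree embedded inside a single herds process started from one particle. Write $\rho$ for the probability that the herds process started from one particle survives; by Proposition~\ref{prop_coupling} and vertex-transitivity of $\mathbb{T}$ this is also the survival probability of the herd-marginal of $(\Upsilon_t)$ started from one marked particle, and the goal is $\rho>0$. First a reduction: if $\lambda>\lambda_c(\mathbb{T})$ then the contact process on $\mathbb{T}$ survives with positive probability, and since $\bigcup_i\xi^i_t\subseteq\bigcup_i\eta^i_t$ in $(\Upsilon_t)$ the herds process survives as well; so we may assume $\lambda\le\lambda_c(\mathbb{T})$. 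Then the marked contact process $\bigl(\bigcup_i\xi^i_t\bigr)_{t\ge 0}$ dies out almost surely, so $(\dbh_t)$ started from one marked particle reaches, at an almost surely finite time, a state consisting of exactly $N$ frozen herds, where $N$ is the number of herds that ever become frozen; each such herd contains at least one particle (otherwise it is deleted, not frozen), and $\mathbb{E}[N]>1$ by hypothesis.

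\textbf{The embedded tree.} Run a herds process from one particle, mark that particle, and read off the coupled processes $(\Upsilon_t)$ and $(\dbh_t)$. Inside $(\Upsilon_t)$, each of the $N$ herds that freezes in $(\dbh_t)$ continues to evolve with no marked particles, hence---because herds do not otherwise interact and the exclusion rule is vacuous for it---exactly as an independent plain herds process started from its (nonempty) configuration. By the standard monotonicity of the herds process in its initial configuration, this evolution stochastically dominates a herds process started from a single one of its particles, and we couple the two so that the latter is realized as a sub-process of the former. We declare each of these $N$ single-particle processes a child of the root and iterate the construction inside each of them (with independent randomness for the couplings). This produces a family of individuals, each realized as a sub-process of the original herds process, forming a Galton--Watson tree with offspring law that of $N$; since $\mathbb{E}[N]>1$, elementary Galton--Watson theory gives that this tree is infinite with probability $1-s^*>0$.

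\textbf{From an infinite tree to survival.} Finally I would argue that an infinite embedded tree forces the original herds process to survive. Such a tree has infinitely many individuals, hence infinitely many freezing events across all the coupled copies of $(\dbh_t)$. A herd can lose its last marked particle only through the death of a marked particle or through a herd split (which sends all marked particles to one side), so each freezing event is caused by a death or a split in the ambient herds process; moreover distinct individuals occupy disjoint portions of that process, so these freezing events correspond to infinitely many \emph{distinct} jumps of it. By non-explosiveness (observation~(a) in Section~\ref{sec_basic}) the jumps of a herds process cannot accumulate at a finite time, so the ambient process performs jumps at arbitrarily large times, hence is nonempty at arbitrarily large times; since the empty configuration is absorbing, it is never empty, i.e.\ it survives. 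Therefore $\rho\ge 1-s^*>0$, which is exactly $\lambda>\bar\lambda(\mathsf{v})$.

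\textbf{Main obstacle.} The routine parts are the reductions and the Galton--Watson bookkeeping; the delicate point is organizing the infinitely many coupled copies of $(\dbh_t)$ as sub-processes of a single herds process. This requires making precise (i) the monotone domination of a frozen herd's continued evolution by a fresh single-particle herds process, realized as an honest coupling under which jumps map to jumps, and (ii) the assertion that the freezing events of the various individuals correspond to distinct jumps of the ambient process, which is what lets non-explosiveness rule out the embedded tree being ``squeezed into finite time''.
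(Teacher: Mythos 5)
Your proposal is correct, and it closes the argument by a genuinely different final step than the paper, although both proofs identify the same branching structure (the frozen herds of~$(\dbh_t)$ as offspring). The paper does not iterate: it ``unfreezes'' each frozen herd into an independent herds process started from its frozen configuration~$A_k$, observes that the resulting process has exactly the law of~$(\Xi_t)$ started from one particle, and hence writes the extinction probability~$q$ as~$\mathbb{E}\bigl[\prod_{k\le K} g(A_k)\bigr]$, where~$g(A)$ is the extinction probability from configuration~$A$ and~$K$ is the number of frozen herds; the monotonicity bound~$g(A_k)\le g(\{o\})=q$ then gives~$q\le G(q)$ for the generating function~$G$ of~$K$, and convexity of~$G$ together with~$G'(1)=\mathbb{E}[K]>1$ places~$q$ below the nontrivial fixed point~$x^*<1$. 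You replace this one-step generating-function inequality by the fully iterated embedding of a Galton--Watson tree with offspring law~$K$, which then requires the extra bridge ``infinite tree~$\Rightarrow$ survival''; you supply that correctly via distinctness of the freezing jumps and non-explosiveness. The paper's version buys brevity (no nested couplings, no jump-counting); your version buys an explicit lower bound~$1-x^*$ on the survival probability, since a single application of~$q\le G(q)$ only locates~$q$ in~$[0,x^*]\cup\{1\}$ and discarding~$q=1$ implicitly uses the very iteration you carry out. The two points you flag as delicate are the right ones and both go through: a herd of~$(\Upsilon_t)$ containing no marked particles evolves, by inspection of the birth table, exactly as a plain herd independently of all marks elsewhere, so reading off~$(\dbh_t)$ from~$(\Upsilon_t)$ by stopping each herd when it loses its last marked particle is a legitimate coupling; and the monotone coupling in which every herd of the single-particle process is contained in a distinct herd of the~$A_k$-process (splits of the larger process through edges inactive in the matched smaller herd merely refine the matching) does map jumps to jumps, as your non-explosiveness step needs. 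Note that both your proof and the paper's invoke monotonicity of the herds process in its initial configuration without proof; because of the splitting mechanism this is not the usual attractiveness of a spin system, so a complete write-up should include the herd-refinement coupling explicitly.
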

\begin{proof}
	Let~$K$ denote the number of herds that become frozen in the process~$(\dbh_t)_{t \ge 0}$. It is sufficient to prove the lemma under the assumption that~$K$ is almost surely finite, since otherwise~$(\dbh_t)$ clearly survives, so~$(\Xi_t)$ also survives  by Proposition~\ref{prop_coupling} and then there is nothing more to prove.

	We will define a process~$(\breve{\Upsilon}_t)_{t \ge 0}$ from~$(\dbh_t)_{t \ge 0}$; we first give an informal description. Given a trajectory of~$(\dbh_t)$, we include in~$(\breve{\Upsilon}_t)$ the unfrozen portion of~$(\dbh_t)$ (though we ignore the distinction between marked and normal particles). Moreover, for each frozen herd that appeared in the trajectory of~$(\dbh_t)$, we `unfreeze' it in~$(\breve{\Upsilon}_t)$, lettting it evolve like a herds process (with no labelling of particles as marked or normal, and no freezing); they each evolve independently, starting from the time and state in which they entered~$(\dbh_t)$.

	We now turn to a formal description. The process~$(\breve{\Upsilon}_t)$ has at a given time~$t$ a state of the form
	\[ (\breve{\mathcal{J}},\{\breve{\eta}^\ell: \ell \in \breve{\mathcal{J}}\}),\] where~$\breve{\mathcal{J}} \subset \mathbb{N}_0 \times \mathbb{N}$, and each~$\breve{\eta}^\ell \subset \mathbb{T}$. To define it, condition on a trajectory of~$(\dbh_t)_{t \ge 0} = (\mathcal{J}_t,\{(\eta^i_t,\beta^i_t): t \in \mathcal{J}_t\})_{t \ge 0}$, let~$t_1 < t_2 < \cdots$ denote the times at which frozen herds appear in this process, and for each~$k$, let~$i_k \in \mathcal{J}_{t_k}$ denote the index of the frozen herd that appears at time~$t_k$, and~$A_k \subset \mathbb{T}$ the set of particles in this frozen herd. For each~$k$, let
	\[(\Xi^{(k)}_t)_{t \ge t_k}= (\mathcal{J}^{(k)}_t,\;\{\eta^{(k,i)}_t: i \in \mathcal{J}^{(k)}_t\})_{t \ge t_k}\] denote a herds process, started from time~$t_k$ with a single herd with set~$A_k$ of particles; assume that all these herds processes are independent. We then set
	\[\breve{\mathcal{J}}_t := \{(0,i):i \in \mathcal{J}_t\} \cup \{(k,i): t_k \le t,\; i \in \mathcal{J}^{(k)}_t\}\]
	and define~$\{\breve{\eta}^\ell_t: \ell \in \breve{\mathcal{J}}_t\}$ by letting
	\begin{align*}&\breve{\eta}^{(0,i)}_t := \eta^i_t,\quad i \in \mathcal{J}_t,\\[.2cm]
		&\breve{\eta}^{(k,i)}_t := \eta^{(k,i)}_t,\quad (k,i) \in \breve{\mathcal{J}}_t,\; k\neq 0.
	\end{align*}

	We then have that, apart from a change in the set of indices, the process~$(\breve{\Upsilon}_t)_{t \ge 0}$ has the same distribution as the herds process~$(\Xi_t)_{t \ge 0}$ started from a single herd with a single particle at the root. In particular, the following holds.  For any finite~$A \subset \mathbb{T}$, let~$g(A)$ denote the probability that a herds process started from a single herd with~$A$ occupied and~$\mathbb{T}\backslash A$ vacant dies out. Then (again assuming that~$(\Xi_t)$ starts with one herd with one particle),
	\begin{align*}
		q:=\mathbb{P}((\Xi_t) \text{ dies out}) = \mathbb{E}\left[ \prod_{k=1}^K g(A_k)\right].
	\end{align*}
	Moreover, we have~$g(A_k) \le g(\{o\}) = q$ by monotonicity, so
	\begin{align*}
		\mathbb{E}\left[ \prod_{k=1}^K g(A_k) \right] \le \mathbb{E}[q^K].
	\end{align*}
	Letting
	\[G(x):= \mathbb{E}[x^K] = \sum_{k=0}^\infty x^k\cdot \mathbb{P}(K=k),\quad x \in [0,1]\]
	be the probability generating function of~$K$, the total number of frozen herds in~$(\dbh_t)$, we have thus shown that
	\[q \le G(q).\]

	Now, as in the classical proof of phase transition for branching processes, we have that~$G$ is increasing, strictly convex, has~$G(0) > 0$,~$G(1) = 1$ and~$G'(1) = \mathbb{E}[K] > 1$. This implies  that there is a unique~$x^* \in (0,1)$ such that~$G(x^*) = x^*$. The fact that~$q \le G(q)$ implies that~$q \le x^* < 1$, so~$(\Xi_t)$ survives with probability~$1-q >0$.
\end{proof}

\subsection{Strict decrease of critical value: proof of Theorem~\ref{thm_strict}}
We will use the following fact about the contact process on~$\mathbb{T}$; their proof can be found in Part~I of~\cite{lig2} (see Theorem 4.27 and discussion after Theorem 4.46).

\begin{lemma}
	Letting~$(\xi_t)_{t \ge 0}$ denote the contact process on~$\mathbb{T}$, we have that
	\begin{enumerate}
		\item if~$\lambda < \lambda_c(\mathbb{T})$, then there exist~$C,c > 0$ such that
			\begin{equation}\label{eq_sharp_threshold} \mathbb{E}[|\xi_t|] \le Ce^{-ct},\quad t \ge 0;\end{equation}
		\item if~$\lambda = \lambda_c(\mathbb{T})$, then
			\begin{equation}\label{eq_infinity_crit}
		\int_0^\infty \mathbb{E}[|\xi_t|]\;\mathrm{d}t = \infty.
	\end{equation}
	\end{enumerate}
\end{lemma}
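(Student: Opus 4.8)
The plan is to handle the two parts by rather different means: part~1 I would reduce to a known sharpness property of the subcritical phase on $\mathbb{T}$ and simply cite it, whereas part~2 admits a short self-contained argument. Throughout I would write $u_t:=\mathbb{E}[\,|\xi_t|\,]$ for the contact process started from the single seed $\{o\}$; by vertex-transitivity of $\mathbb{T}$ this is independent of the choice of root, and by the additivity of the graphical construction ($\xi_t^{A\cup B}=\xi_t^A\cup\xi_t^B$ under a common coupling) together with the Markov property one gets the submultiplicativity bound $u_{s+t}\le u_s\,u_t$ for all $s,t\ge 0$. Hence $\beta(\lambda):=\lim_{t\to\infty}\tfrac1t\log u_t=\inf_{t>0}\tfrac1t\log u_t$ exists in $[-1,\,d\lambda-1]$ (the upper bound from domination by a pure-birth process, the lower bound from the seed surviving in place), and one gets $u_t\le Ce^{-ct}$ for suitable $C,c>0$ as soon as $u_{t_0}<1$ for \emph{some} $t_0$: split $t$ into $\lfloor t/t_0\rfloor$ blocks of length $t_0$ plus a remainder $r\in[0,t_0)$, on which $u_r\le e^{(d\lambda-1)t_0}\vee 1$.

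For part~1 it therefore suffices to show that $\beta(\lambda)<0$ — equivalently, $u_{t_0}<1$ for some $t_0$ — whenever $\lambda<\lambda_c(\mathbb{T})$. This is precisely the statement that the phase transition on the $d$-regular tree is sharp below the (first) critical value, and is the step I would not carry out from scratch: it is contained in the cited material in~\cite{lig2} (Part~I, Theorem~4.27 and the surrounding discussion), analogous to the sharpness results known on $\mathbb{Z}^d$. I regard this as the main obstacle. Note that a naive first-moment analysis of the radial occupation densities $v_k(t)=\mathbb{E}[\xi_t(x)]$ for $\mathrm{dist}(o,x)=k$ only yields $\beta(\lambda)<0$ for $\lambda<1/d$, which in general falls short of $\lambda_c(\mathbb{T})$, so some genuinely probabilistic input of this kind is unavoidable.

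For part~2 the argument is short. Fix $\lambda=\lambda_c(\mathbb{T})$ and distinguish two cases. If $u_t\not\to 0$, then submultiplicativity forces $u_t\ge 1$ for every $t$ — otherwise $u_{t_0}<1$ for some $t_0$, and the estimate of the first paragraph gives $u_t\to 0$, a contradiction — whence $\int_0^\infty u_t\,\mathrm{d}t\ge\int_0^\infty 1\,\mathrm{d}t=\infty$. It remains to exclude $u_t\to 0$ at $\lambda_c(\mathbb{T})$. If $u_t\to 0$ there, then $u_{t_0}<1$ for some $t_0$; since $\lambda\mapsto u_{t_0}(\lambda)$ is right-continuous (immediate from the standard monotone coupling in $\lambda$ and dominated convergence), there is $\lambda'>\lambda_c(\mathbb{T})$ with $u_{t_0}(\lambda')<1$, hence $\mathbb{P}_{\{o\},\lambda'}(\xi_t\neq\varnothing)\le u_t(\lambda')\le Ce^{-ct}\to 0$; so the rate-$\lambda'$ process dies out almost surely from $\{o\}$, and therefore, by additivity, from every finite configuration — contradicting $\lambda'>\lambda_c(\mathbb{T})$ and the definition of $\lambda_c(\mathbb{T})$.

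In summary, part~2 follows from submultiplicativity, monotonicity in $\lambda$, and the definition of $\lambda_c(\mathbb{T})$ alone, and I would present it in full; part~1 rests on the sharpness of the subcritical phase on $\mathbb{T}$, which I would invoke from~\cite{lig2} and which is the genuinely hard ingredient.
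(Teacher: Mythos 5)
Your proposal is correct, and it in fact does more than the paper, which offers no proof of this lemma at all: both parts are simply attributed to Part~I of~\cite{lig2} (Theorem~4.27 and the discussion after Theorem~4.46). You rightly isolate part~1 as the genuinely hard input --- the sharpness of the subcritical phase on~$\mathbb{T}$, i.e.\ that $\inf_t \frac1t\log\mathbb{E}[|\xi_t|]<0$ throughout $\lambda<\lambda_c(\mathbb{T})$ --- and citing it is exactly what the paper does. Your self-contained argument for part~2 is sound and is essentially the standard one hiding behind the paper's citation: additivity of the graphical construction gives submultiplicativity $u_{s+t}\le u_s u_t$, so by Fekete either $u_{t_0}<1$ for some $t_0$ (forcing exponential decay of $u_t$, hence $\mathbb{P}(\xi_t\neq\varnothing)\to 0$) or $u_t\ge 1$ for all $t$ (forcing the integral to diverge); the right-continuity of $\lambda\mapsto u_{t_0}(\lambda)$, which pushes the first alternative slightly above $\lambda_c(\mathbb{T})$ and contradicts the definition of $\lambda_c(\mathbb{T})$, does hold as you claim (under the thinning coupling the configuration $\xi_{t_0}^{\lambda'}$ agrees with $\xi_{t_0}^{\lambda}$ for $\lambda'$ close enough to $\lambda$ almost surely, and the Yule-process domination supplies the integrable bound). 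The only caveat is presentational: since the paper treats the whole lemma as quotable background, writing out part~2 is optional, but your version has the merit of making explicit which half of the lemma actually requires deep input.
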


We will also use the following simple fact about Markov chains.
\begin{lemma}\label{lem_markov}
	Let~$(X_t)_{t \ge 0}$ be a continuous-time Markov chain on a countable state space~$S$. For each~$x,y \in S$, let~$\mathrm{rate}(x,y) \ge 0$ denote the rate at which the chain jumps from~$x$ to~$y$, and  assume that~$|\{y: \mathrm{rate}(x,y) > 0\}| < \infty$ for each~$x$. For any function~$f: S \to  \mathbb{R}$ let
	\begin{equation}\label{eq_prot_gen} Lf(x):= \sum_y \mathrm{rate}(x,y)\cdot (f(y)-f(x)),\quad  x \in S.\end{equation}
	Let~$f_1,f_2:S \to \mathbb{R}$ be non-negative functions such that
	\begin{equation}\label{eq_generator_ineq} 
		Lf_2(x) \ge \alpha \cdot f_1(x) - \beta \cdot f_2(x) \quad \text{ for all } x \in S
	\end{equation}
	where~$\alpha,\beta$ are positive constants. We then have that
	\begin{equation}\label{eq_nice_exps}
		\int_0^\infty \mathbb{E}[f_2(X_t)] \;\mathrm{d}t \ge \frac{1}{\beta}\cdot \left(-\limsup_{t \to \infty}\mathbb{E}[f_2(X_t)] + \alpha \cdot   \int_0^{\infty} \mathbb{E}[f_1(X_t)]\;\mathrm{d}t \right).
	\end{equation}
\end{lemma}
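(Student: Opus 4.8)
The plan is to turn the pointwise inequality \eqref{eq_generator_ineq} between generators into a differential inequality for the expectations, integrate it, and let the time horizon go to infinity. Write $u(t) := \mathbb{E}[f_2(X_t)]$ and $v(t) := \mathbb{E}[f_1(X_t)]$; in each place where the lemma will be applied these are finite for every $t$ (the $f_i$ being dominated by a power of the total particle count, which has finite moments at each fixed time), and I carry out the argument under this assumption. The starting point is the integral form of the forward equation,
\[\mathbb{E}[f_2(X_t)] - \mathbb{E}[f_2(X_0)] \;=\; \int_0^t \mathbb{E}[Lf_2(X_s)]\,\mathrm{d}s, \qquad t \ge 0,\]
i.e.\ Dynkin's formula for the function $f_2$. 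Since $f_2$ need not be bounded, I would obtain it by localization: take an increasing sequence of finite sets $S_1 \subseteq S_2 \subseteq \cdots$ with $\bigcup_N S_N = S$, set $\tau_N := \inf\{t \ge 0 : X_t \notin S_N\}$, and apply the elementary bounded version of Dynkin's formula to the stopped chain $X_{\,\cdot\, \wedge \tau_N}$ — this is legitimate because up to time $\tau_N$ the chain visits only the finitely many states of $S_N$ and their (finitely many) out-neighbours, so $f_2$, $Lf_2$ and the total jump rate are all bounded along such trajectories. Since the chains to which the lemma is applied are non-explosive, $\tau_N \uparrow \infty$ almost surely, and letting $N \to \infty$ with the monotone and dominated convergence theorems (using the integrability just mentioned) gives the displayed identity.

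Granting this, substitute \eqref{eq_generator_ineq}: because $Lf_2(x) \ge \alpha f_1(x) - \beta f_2(x)$ pointwise,
\[u(t) - u(0) \;\ge\; \alpha \int_0^t v(s)\,\mathrm{d}s \;-\; \beta \int_0^t u(s)\,\mathrm{d}s, \qquad t \ge 0.\]
Rearranging and using $u(0) = \mathbb{E}[f_2(X_0)] \ge 0$ gives $\beta \int_0^t u(s)\,\mathrm{d}s \ge \alpha \int_0^t v(s)\,\mathrm{d}s - u(t)$, and since $u \ge 0$ the map $t \mapsto \int_0^t u(s)\,\mathrm{d}s$ is nondecreasing, so
\[\beta \int_0^\infty u(s)\,\mathrm{d}s \;\ge\; \alpha \int_0^t v(s)\,\mathrm{d}s \;-\; u(t) \qquad \text{for every } t \ge 0.\]
Finally, choose $t_k \to \infty$ with $u(t_k) \to \limsup_{t\to\infty} u(t)$; letting $k \to \infty$, and using monotone convergence (as $v \ge 0$) to pass $\int_0^{t_k} v(s)\,\mathrm{d}s \to \int_0^\infty v(s)\,\mathrm{d}s$, we obtain $\beta \int_0^\infty u(s)\,\mathrm{d}s \ge \alpha \int_0^\infty v(s)\,\mathrm{d}s - \limsup_{t\to\infty} u(t)$, which is \eqref{eq_nice_exps} after dividing by $\beta$.

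I expect the only step requiring real care to be the first — justifying Dynkin's formula / the forward equation for possibly unbounded $f_1, f_2$ and a priori explosive chains — which the localization argument above handles once the (routine) integrability is checked in each application. The remaining steps are elementary: integrating a differential inequality, using nonnegativity to drop and bound terms, and extracting a subsequence realizing the limsup.
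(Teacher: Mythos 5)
Your proposal is correct and follows essentially the same route as the paper: both rest on the local martingale $M_t = f_2(X_t) - \int_0^t Lf_2(X_s)\,\mathrm{d}s$, localized by stopping times (you stop at exit times of finite sets $S_N$, the paper stops when $\max(f_1,f_2,Lf_1,Lf_2)$ exceeds a level $a$), followed by substituting \eqref{eq_generator_ineq}, rearranging, and passing to the limit with a sequence $t_k$ realizing the $\limsup$. The only difference is bookkeeping: you pass to the exact forward-equation identity first (under an integrability assumption verified in the applications), whereas the paper keeps the stopped quantities and sends $a\to\infty$ before $t\to\infty$; both versions rely on the same routine integrability facts.
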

\begin{proof}
	The process
	\[M_t:= f_2(X_t) - \int_0^t Lf_2(X_s)\;\mathrm{d}s,\quad t  \ge 0\]
	is a local martingale. 
For each~$a > 0$, let
	\[\tau_a:= \inf\{t \ge 0: \max(f_1(X_t),f_2(X_t),Lf_1(X_t),Lf_2(X_t)) \ge a\},\quad a > 0.\]
	By the optional  stopping theorem we have, for any~$t \ge 0$,
	\begin{align*}
		f_2(x_0)=  \mathbb{E}[M_0] &= \mathbb{E}[M_{t \wedge \tau_a}] \\&= \mathbb{E}[f_2(X_{t \wedge \tau_a})] - \int_0^{t\wedge \tau_a} \mathbb{E}[Lf_2(X_s)]\;\mathrm{d}s\\
		&\stackrel{\eqref{eq_generator_ineq}}{\le } \mathbb{E}[f_2(X_{t \wedge \tau_a})] - \alpha \cdot \int_0^{t \wedge \tau_a} \mathbb{E}[f_1(X_{s})]\;\mathrm{d}s + \beta \cdot \int_0^{t \wedge \tau_a} \mathbb{E}[f_2(X_{s})]\;\mathrm{d}s,
	\end{align*}
	so
	\[\int_0^{t \wedge \tau_a} \mathbb{E}[f_2(X_{s})]\;\mathrm{d}s \ge \frac{1}{\beta}\cdot \left( -\mathbb{E}[f_2(X_{t \wedge \tau_a})]+\alpha \cdot \int_0^{t \wedge \tau_a} \mathbb{E}[f_1(X_s)]\;\mathrm{d}s \right).\]
	The desired inequality now follows by first taking~$a \to \infty$, and then taking~$t \to \infty$.
\end{proof}

Let~$T$ be a set of vertices of~$\mathbb{T}$.
We let~$\partial_\star T$ denote the set of ordered pairs~$(u,v)$, where~$u,v$ are vertices of~$\mathbb{T}$ such that:
\begin{itemize}
	\item $u \sim v$;
	\item $u \in T$;
	\item for every~$v' \in T \backslash \{u,v\}$, the shortest path in~$\mathbb{T}$ from~$v'$ to~$v$ intersects~$u$.
\end{itemize}
Equivalently, this means that~$u \in T$,~$u \sim v$, and among the~$d$ connected components of~$\mathbb{T}$ that appear if we delete~$v$, only the one containing~$u$ intersects~$T$. Note that the property~$(u,v) \in \partial_\star T$ does not depend on whether or not~$v \in T$.
It follows from~\cite[Lemma 6.2]{pemantle} that
\begin{equation}\label{eq_from_pem}
	|\{(u,v) \in \partial_\star T: v \notin T\}| \ge  \left(1 - \frac{1}{d-1}\right)\cdot |T|.
\end{equation}

We will now define and relate several functions of a configuration~$\dbh = (\mathcal{J},\{(\eta^i,\xi^i): i \in \mathcal{J}\})$ of the process~$(\dbh_t)_{t \ge 0}$. We abuse notation and write
\[\partial_\star \dbh := \partial_\star \left(\bigcup_{i \in \mathcal{J}} \xi^i\right).\]
Then, first let
\[F_1(\dbh):= \sum_{i \in \mathcal{J}} |\xi^i|,\]
that is the number of marked particles in all herds of~$\dbh$. Second, 
\[F_2(\dbh) := \sum_{(u,v) \in \partial_\star \hat{\Upsilon}  }\; \sum_{i \in \mathcal{J}}\; \mathds{1}\{\{u,v\} \subset \xi^i\}.\]

	\begin{lemma} Assume that~$\lambda < \lambda_c(\mathbb{Z})$.
We then have
		\[\int_0^\infty \mathbb{E}[F_2(\dbh_t)]\;\mathrm{d}t \ge \frac{\lambda \cdot \left(1- \frac{1}{d-1}\right)}{2+\mathsf{v}+\lambda \cdot (d-1)} \cdot\int_0^\infty \mathbb{E}[F_1(\dbh_t)]\;\mathrm{d}t. \]
\end{lemma}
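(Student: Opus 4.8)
The plan is to apply Lemma~\ref{lem_markov} to the Markov chain $(\dbh_t)_{t\ge 0}$ with $f_1:=F_1$ and $f_2:=F_2$ and with the constants $\alpha:=\lambda(1-\tfrac{1}{d-1})$ and $\beta:=2+\mathsf{v}+\lambda(d-1)$. Concretely I would establish two inputs: the generator inequality $LF_2\ge \alpha F_1-\beta F_2$ on every state, and $\limsup_{t\to\infty}\mathbb{E}[F_2(\dbh_t)]=0$. Granting these, \eqref{eq_nice_exps} gives $\int_0^\infty\mathbb{E}[F_2(\dbh_t)]\,\mathrm{d}t\ge\frac{\alpha}{\beta}\int_0^\infty\mathbb{E}[F_1(\dbh_t)]\,\mathrm{d}t$, which is exactly the assertion since $\alpha/\beta$ equals the displayed constant. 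Throughout I would write $T:=\bigcup_{i\in\mathcal{J}}\xi^i$ for the set of marked particles of a state $\dbh=(\mathcal{J},\{(\eta^i,\xi^i)\})$; the exclusion rule makes the $\xi^i$ pairwise disjoint, so $F_1(\dbh)=|T|$, every pair of $\partial_\star\dbh$ has its first coordinate in a unique herd, and $F_2(\dbh)$ counts the ordered pairs $(u,v)\in\partial_\star\dbh$ both of whose (marked) endpoints lie in that common herd.

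For the $\limsup$ statement I would first note, exactly as in the reasoning behind Proposition~\ref{prop_coupling}, that a herd which ever contains no marked particle can never again acquire one and cannot influence the marked particles of the other herds; hence the freezing mechanism has no effect on the evolution of $T$, and $(T_t)_{t\ge0}$ is a contact process on $\mathbb{T}$ with rate $\lambda$ started from a single occupied site. Since $F_2(\dbh_t)\le|\partial_\star\dbh_t|\le d\,|T_t|$, this gives $\mathbb{E}[F_2(\dbh_t)]\le d\,\mathbb{E}[|T_t|]$, which tends to $0$ by \eqref{eq_sharp_threshold} (applicable because $\lambda<\lambda_c(\mathbb{T})$). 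This also renders both integrals finite, so Lemma~\ref{lem_markov} genuinely applies; its remaining hypotheses (countable state space, locally finite rates, non-explosiveness of $(\dbh_t)$) hold since $(\dbh_t)$ is dominated by the herds process, which is non-explosive.

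The bulk of the work is the generator inequality, obtained by splitting $LF_2$ into the contributions of births, deaths and splittings and controlling the ``gain'' and each ``loss'' separately. \emph{Births:} the only births that change $F_2$ are those in which a marked particle at some $u\in\xi^i$ gives birth at a neighbour $v\notin T$ (the green cells of the table), which happens at rate $\lambda$ and adds $v$ to $\xi^i$. For each $(u,v)\in\partial_\star\dbh$ with $v\notin T$ this move is available, and it turns $(u,v)$ into a pair of $\partial_\star(T\cup\{v\})$ with both endpoints in $\xi^i$, a gain of $1$; by \eqref{eq_from_pem} there are at least $(1-\tfrac{1}{d-1})|T|$ such pairs, so the total gain is at least $\alpha F_1$. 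The only pair such a birth can destroy is $(a,u)$, where $a$ is the unique neighbour of $u$ through which all of $T\setminus\{u\}$ is reached when such an $a$ exists, and this is felt by $F_2$ only if $a\in\xi^i$ as well, in which case $(a,u)$ is itself a pair counted by $F_2$. Since for such a ``boundary'' particle $u$ all $d-1$ neighbours other than $a$ lie outside $T$ and trigger this loss, the total birth-loss is $\lambda(d-1)$ times the number of boundary particles $u$ with $a(u)$ in the same herd, and $u\mapsto(a(u),u)$ injects this set into the set of pairs counted by $F_2$; so the birth-loss is at most $\lambda(d-1)F_2$. \emph{Deaths:} normal-particle deaths do not change $F_2$, and a marked-particle death at $u$ can only destroy pairs of $\partial_\star\dbh$ incident to $u$; summing the rate-$1$ contributions over all marked particles and using that every $F_2$-pair has two marked endpoints, the death-loss is at most $2F_2$. \emph{Splittings:} splitting an active edge $e=\{x,y\}$ leaves $T$ and $\partial_\star\dbh$ unchanged and can only un-count pairs $(u,v)\in\partial_\star\dbh$ whose endpoints are separated by $e$; as $u\sim v$ this forces $\{u,v\}=e$, so at most the (at most two) $F_2$-pairs equal to $e$ are destroyed, and summing the rate-$\mathsf{v}$ contributions, each $F_2$-counted active edge being charged its own contribution to $F_2$, the split-loss is at most $\mathsf{v}F_2$. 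Combining, $LF_2\ge \alpha F_1-(\lambda(d-1)+2+\mathsf{v})F_2=\alpha F_1-\beta F_2$.

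I expect the birth bookkeeping to be the only genuine obstacle: the key point is that in a ``boundary'' birth the destroyed pair $(a(u),u)$ is charged to a pair that is \emph{already} counted by $F_2$, so that every loss term — for births, deaths and splittings alike — is $O(F_2)$ rather than $O(F_1)$; once this is recognised, matching the constant from \eqref{eq_from_pem} against the three loss bounds is precisely what produces $\alpha$ and $\beta$. Everything else reduces to routine inspection of the transition table and verification of the hypotheses of Lemma~\ref{lem_markov}.
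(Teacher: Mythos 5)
Your proposal is correct and follows essentially the same route as the paper: apply Lemma~\ref{lem_markov} with $f_1=F_1$, $f_2=F_2$, kill the $\limsup$ via $F_2\le d\,F_1$ and the subcritical contact-process bound~\eqref{eq_sharp_threshold}, and prove $LF_2\ge \lambda(1-\tfrac1{d-1})F_1-(2+\mathsf{v}+\lambda(d-1))F_2$ by pairing the gain from births onto $\partial_\star$-pairs with~\eqref{eq_from_pem} against per-pair loss rates of $2$ (deaths), $\mathsf{v}$ (the split of that pair's own edge) and $(d-1)\lambda$ (births off the boundary endpoint). The only difference is bookkeeping — you charge birth-losses per boundary particle via the injection $u\mapsto(a(u),u)$, whereas the paper charges each counted pair directly — which yields the identical constants.
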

\begin{proof}
	We use Lemma~\ref{lem_markov} with~$f_1 = F_1$ and~$f_2 = F_2$. By~\eqref{eq_sharp_threshold}, Proposition~\ref{prop_coupling} and the simple bound
	\[F_2(\dbh) \le d\cdot F_1(\dbh) = d\cdot \sum_{i \in \mathcal{J}} |\xi^i| = d \cdot |\cup_{i \in \mathcal{J}} \xi^i|,\]
	we have that the~$\limsup$ that appears in~\eqref{eq_nice_exps} vanishes.
	Letting~$L$ denote the generator of the dynamics as in~\eqref{eq_prot_gen}, we will now prove that, for any state~$\dbh = (\mathcal{J},\{(\eta^i,\beta^i):i \in \mathcal{J}\})$, we have
	\begin{equation} LF_2(\dbh) \ge \lambda \cdot \left(1- \frac{1}{d-1}\right) \cdot F_1(\dbh) -(2+\mathsf{v}+\lambda \cdot (d-1))\cdot F_2(\dbh),\label{eq_case_inst}\end{equation}
		from which the statement will follow.  In order to prove~\eqref{eq_case_inst}, we examine all possible jumps of the dynamics from~$\dbh$, and how they affect the value of~$F_2$. To this end, let~$\mathrm{rate}(\dbh,\dbh')$ denote the rate at which the process jumps from~$\dbh$ to an alternate state~$\dbh'$.

	Fix~$(u,v) \in \partial_\star \dbh$ with~$v \notin \cup_{i \in \mathcal{J}} \xi^i$. Then, there exists~$i \in \mathcal{J}$ such that~$u \in \xi^i$, and moreover, if there is a birth in this herd~$i$ from~$u$ to~$v$, then a new marked particle appears there in~$v$, and the pair~$(u,v)$ then increments the value of~$F_2$ by one. This shows that
	\[\sum_{\dbh'} \mathrm{rate}(\dbh,\dbh')\cdot \max\left(F_2(\dbh') - F_2(\dbh),\;0\right)\ge \lambda \cdot |\{(u,v) \in \partial_\star \dbh: v \notin \cup_i \xi^i\}|,\]
	so by~\eqref{eq_from_pem} we obtain
	\[\sum_{\dbh'} \mathrm{rate}(\dbh,\dbh')\cdot \max\left(F_2(\dbh') - F_2(\dbh),\;0\right)\ge \lambda \cdot \left(1-\frac{1}{d-1}\right)\cdot F_1(\dbh).\]

	Next, fix~$(u,v) \in \partial_\star \dbh$ such that there is a herd~$i \in \mathcal{J}$ with~$\{u,v\} \in \xi^i$ (so that the pair~$(u,v)$ contributes to the sum that defines~$F_2(\dbh)$). Note that the following are the only jumps in the dynamics that can make it so that~$(u,v)$ no longer contributes to~$F_2$: the death of the particle at~$u$ in herd~$i$, the death of the particle at~$v$ in herd~$i$, a split of the edge between~$u$ and~$v$ in herd~$i$, or a birth from the particle at~$v$ in herd~$i$ towards a neighbor different from~$u$. This shows that
	\[\sum_{\dbh'} \mathrm{rate}(\dbh,\dbh')\cdot \min\left(F_2(\dbh') - F_2(\dbh),\;0\right)\ge -(2+\mathsf{v} + (d-1)\cdot \lambda)\cdot F_2(\dbh),\]
	completing the proof.
\end{proof}
For~$\dbh = (\mathcal{J},\{(\eta^i,\xi^i):i \in \mathcal{J}\})$, define
\[F_3(\dbh):= \sum_{(u,v) \in \partial_\star\hspace{-.08cm}\dbh} \;\sum_{i \in \mathcal{J}} \sum_{\substack{j \in \mathcal{J},\\ j \neq i}} \mathds{1}\{u \in \xi^i,\; v \notin \eta^i,\; v \in \xi^j\}.\]
\begin{lemma}
Assume that~$\lambda < \lambda_c(\mathbb{T})$. We then have
	\[\int_0^\infty \mathbb{E}[F_3(\dbh_t)]\;\mathrm{d}t \ge \frac{\mathsf{v}}{2+\lambda \cdot d} \cdot \int_0^\infty \mathbb{E}[F_2(\dbh_t)]\;\mathrm{d}t.\]
\end{lemma}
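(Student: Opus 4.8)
The plan is to invoke Lemma~\ref{lem_markov} with $f_1 = F_2$, $f_2 = F_3$, $\alpha = \mathsf{v}$ and $\beta = 2 + \lambda\cdot d$; the stated inequality is then exactly \eqref{eq_nice_exps} once we know the $\limsup$ term there vanishes. That term is handled as in the proof of the previous lemma: one has $F_3(\dbh) \le |\partial_\star\dbh| \le d\cdot|\cup_{i\in\mathcal{J}}\xi^i|$, and by Proposition~\ref{prop_coupling} together with \eqref{eq_sharp_threshold} the expectation $\mathbb{E}[\,|\cup_{i\in\mathcal{J}_t}\xi^i_t|\,]$ decays exponentially since $\lambda < \lambda_c(\mathbb{T})$, so $\limsup_{t\to\infty}\mathbb{E}[F_3(\dbh_t)] = 0$. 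The real content is the pointwise generator inequality $LF_3(\dbh) \ge \mathsf{v}\cdot F_2(\dbh) - (2+\lambda\cdot d)\cdot F_3(\dbh)$.

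To prove it I would first rewrite $F_3$ free of herd labels: by the exclusion rule each $u\in\cup_i\xi^i$ is marked in a unique herd $h(u)$, the indices $i,j$ in the definition of $F_3$ are forced to be $h(u)$ and $h(v)$, and $i\neq j$ is automatic once $v\notin\eta^i$; hence $F_3(\dbh)$ is the number of pairs $(u,v)\in\partial_\star\dbh$ with $v\in\cup_i\xi^i$ and $v\notin\eta^{h(u)}$. Now inspect the jumps of $(\dbh_t)$ from $\dbh$. For the positive part, use herd splittings: a pair $(u,v)$ contributes to $F_2$ precisely when $(u,v)\in\partial_\star\dbh$ and $\{u,v\}\subset\xi^i$ for the unique such herd $i$; then $\{u,v\}$ is an active edge of herd $i$, and the splitting through it (rate $\mathsf{v}$) separates $u$ and $v$ into two distinct new herds with $v$ no longer in the herd of $u$, so $(u,v)$ becomes a new contributor to $F_3$. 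A splitting never removes a contributor from $F_3$, since it leaves the marked set unchanged and only shrinks $\eta^{h(u)}$; summing over all $F_2$-pairs gives $\sum_{\dbh'}\mathrm{rate}(\dbh,\dbh')\cdot \max(F_3(\dbh')-F_3(\dbh),\,0) \ge \mathsf{v}\cdot F_2(\dbh)$. For the negative part, fix a contributor $(u,v)$ and list the jumps that can destroy it: the death of the marked particle at $u$ in herd $h(u)$ (rate $1$); the death of the marked particle at $v$ in herd $h(v)$ (rate $1$); a birth that lands on $v$ inside herd $h(u)$, putting $v$ into $\eta^{h(u)}$; and a birth out of the marked particle at $v$ towards a neighbour $w\neq u$, which creates a marked particle off the $u$-side of $v$ and so evicts $(u,v)$ from $\partial_\star$. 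Bounding the combined rate of the last two families by $\lambda\cdot d$ gives $\sum_{\dbh'}\mathrm{rate}(\dbh,\dbh')\cdot \max(F_3(\dbh)-F_3(\dbh'),\,0) \le (2+\lambda\cdot d)\cdot F_3(\dbh)$, and adding the two parts yields the generator inequality; Lemma~\ref{lem_markov} then finishes.

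The delicate step is the negative-part accounting, for two reasons. First, unlike the quantities in the earlier lemmas, membership in $F_3$ depends on the global marked set through $\partial_\star$, so one must rule out that births far from $(u,v)$ destroy the pair; this uses the observation that when $(u,v)\in\partial_\star$ the only marked particle adjacent to $v$ is $u$, so any new marked particle landing off the $u$-side of $v$ must be issued by a birth out of $v$ itself. Second, one must track how $h(u)$ changes under splittings (it can only shrink, so $v$ cannot enter $\eta^{h(u)}$ through a split) and must handle births onto $v$ coming from particles of $h(u)$ other than $u$, which can occur when a death has disconnected that herd. Once these points are pinned down the remaining work is a routine case analysis of the same flavour as in the two preceding lemmas.
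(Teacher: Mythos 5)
Your proposal follows the paper's proof essentially verbatim: apply Lemma~\ref{lem_markov} with $f_1=F_2$, $f_2=F_3$, kill the $\limsup$ via $F_3\le d\,F_1$ together with \eqref{eq_sharp_threshold} and Proposition~\ref{prop_coupling}, get the positive part of $LF_3$ from splittings of the active edge $\{u,v\}$ in an $F_2$-pair, and bound the destruction rate of an $F_3$-pair by a case analysis of deaths, births onto $v$, and births out of $v$. The one place where you are actually \emph{more} careful than the paper is the observation that normal particles of the herd $h(u)$ sitting at neighbours of $v$ other than $u$ (possible after deaths disconnect that herd) can also give birth onto $v$ and destroy the pair; note, however, that after flagging this you still bound the combined rate of those births plus the $(d-1)\lambda$ births out of $v$ by $\lambda d$, which in the worst case (all $d$ neighbours of $v$ in $\eta^{h(u)}$) is really $(2d-1)\lambda$ --- the paper's own accounting has the same slack, and since only positivity of the constant matters downstream, replacing $2+\lambda d$ by $2+(2d-1)\lambda$ costs nothing.
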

\begin{proof}
	We apply Lemma~\ref{lem_markov} with~$f_1 = F_2$ and~$f_2 = F_3$. We again have~$F_3(\dbh) \le d\cdot F_1(\dbh)$, so the~$\limsup$ in~\eqref{eq_nice_exps} vanishes. The proof will now follow from showing that
	\[LF_3(\dbh) \ge \lambda \cdot F_2(\dbh) - (2+\lambda \cdot d)\cdot F_3(\dbh).\]
	To prove this, again we inspect all the possible jumps of the dynamics from~$\dbh$, and how they affect~$F_3$. 

	Fix~$(u,v) \in \partial_\star \dbh$ such that there exists~$i \in \mathcal{J}$ such that~$u,v\in \xi^i$ (so that~$(u,v)$ contributes to the sum that defines~$F_2(\dbh)$). If the edge between~$u$ and~$v$ in herd~$i$ splits, the value of~$F_3$ is incremented by one. This shows that 
	\[\sum_{\dbh'} \mathrm{rate}(\dbh,\dbh')\cdot \max\left(F_3(\dbh')- F_3(\dbh),\;0\right) \ge \mathsf{v}\cdot F_2(\dbh).\]

	Now, fix~$(u,v) \in \partial_\star \dbh$ such that there exist distinct herd indices~$i, j \in \mathcal{J}$ such that~$u \in \xi^i$,~$v \in \xi^j$ and~$v \notin \eta^i$ (so that the pair~$(u,v)$ contributes to the sum that defines~$F_3(\dbh)$). Then, the following are the only jumps in the dynamics that can make it so that~$(u,v)$ no longer contributes to~$F_3$: (i) the death of the marked particle at~$u$ in herd~$i$, (ii) a birth from this particle towards position~$v$, (iii) the death of the marked particle at~$v$ at herd~$j$, (iv) a birth from this particle towards a neighboring position distinct from~$u$. Taking this into account we have
	\[\sum_{\dbh'} \mathrm{rate}(\dbh,\dbh')\cdot \min\left(F_3(\dbh')- F_3(\dbh),\;0\right) \ge -(2+\lambda \cdot d)\cdot F_3(\dbh).\]
\end{proof}

We now define our fourth function of a configuration~$\dbh = (\mathcal{J},\{(\eta^i,\xi^i):i \in \mathcal{J}\})$ by
\[F_4(\dbh):= \sum_{(u,v) \in \partial_\star \hspace{-.08cm}\dbh}\; \sum_{i \in \mathcal{J}} \mathds{1}\{u \in \xi^i,\;v \in \eta^i\backslash \xi^i\}.\]
The next is proved by very similar reasoning as the previous two lemmas, so we omit the details.
\begin{lemma}
	Assume that~$\lambda < \lambda_c(\mathbb{T})$. We then have
	\[\int_0^\infty \mathbb{E}[F_4(\dbh_t)]\;\mathrm{d}t \ge \frac{\mathsf{\lambda}}{2+\lambda+\mathsf{v}} \cdot \int_0^\infty \mathbb{E}[F_3(\dbh)]\;\mathrm{d}t.\]
\end{lemma}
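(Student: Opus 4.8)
The plan is to follow verbatim the scheme of the two preceding lemmas and apply Lemma~\ref{lem_markov} to the Markov chain $(\dbh_t)$ with $f_1=F_3$, $f_2=F_4$, $\alpha=\lambda$ and $\beta=2+\lambda+\mathsf{v}$. First I would dispose of the $\limsup$ term in~\eqref{eq_nice_exps}: by the exclusion rule $F_4(\dbh)\le d\cdot|\cup_{i\in\mathcal J}\xi^i|=d\cdot F_1(\dbh)$, and since a herd of $(\dbh_t)$ freezes exactly when it has no marked particles, freezing never changes the marked-particle configuration $\bigl(\cup_{i}\xi^i_t\bigr)_{t\ge0}$; hence this process has the same law as the analogous process for $(\Upsilon_t)$, which by Proposition~\ref{prop_coupling} is a contact process on $\mathbb{T}$ with rate $\lambda<\lambda_c(\mathbb{T})$. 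Consequently $\mathbb{E}[F_4(\dbh_t)]\le d\,\mathbb{E}[|\xi_t|]\to0$ by~\eqref{eq_sharp_threshold}. It then remains only to prove the generator estimate
\[
  LF_4(\dbh)\ \ge\ \lambda\cdot F_3(\dbh)\ -\ (2+\lambda+\mathsf{v})\cdot F_4(\dbh)\qquad\text{for every state }\dbh,
\]
which I would establish, as in the previous two lemmas, by separately bounding the positive and negative parts of $\sum_{\dbh'}\mathrm{rate}(\dbh,\dbh')\,(F_4(\dbh')-F_4(\dbh))$.

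For the positive part, fix a pair $(u,v)\in\partial_\star\dbh$ and distinct indices $i,j$ realizing a contribution to $F_3$, so $u\in\xi^i$, $v\notin\eta^i$ and $v\in\xi^j$. The marked particle at $u$ gives birth onto $v$ inside herd $i$ at rate $\lambda$, and by the birth table (the sub-case $v\in\cup_{k\ne i}\xi^k$, $v\notin\eta^i$, $u\in\xi^i$) the effect is that $v$ enters $\eta^i\setminus\xi^i$. No new marked particle is created, so $\partial_\star$ is unchanged, $(u,v)$ now realizes a contribution to $F_4$ through herd $i$, and, since adding a normal particle destroys no pre-existing $F_4$-contribution, $F_4$ goes up by at least one. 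As distinct $F_3$-contributions yield distinct such transitions, the positive part is $\ge\lambda\cdot F_3(\dbh)$.

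For the negative part, fix $(u,v)\in\partial_\star\dbh$ and a herd $i$ with $u\in\xi^i$, $v\in\eta^i\setminus\xi^i$, realizing a contribution to $F_4$. Because $(u,v)\in\partial_\star\dbh$, every marked particle of every herd lies in the component of $\mathbb{T}\setminus\{v\}$ containing $u$, so $u$ is the only neighbour of $v$ that can carry a marked particle. Enumerating the jumps that can remove this contribution — the death of the particle at $u$ in herd $i$ (rate $1$), the death of the particle at $v$ in herd $i$ (rate $1$), the split of the active edge $\{u,v\}$ in herd $i$ (rate $\mathsf{v}$), and a birth onto $v$ that turns it into a marked particle, which by the previous sentence can only originate at $u$ (rate $\lambda$) — gives $-(2+\lambda+\mathsf{v})\cdot F_4(\dbh)$ for the negative part. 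Combining the two parts yields the generator estimate, and Lemma~\ref{lem_markov} then gives the claim.

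The only real work, as the authors indicate by omitting the details, is the case-by-case bookkeeping against the birth table together with the bridge-splitting and death transitions, and in particular the use of the membership $(u,v)\in\partial_\star\dbh$ to confine the births that can threaten a given $F_4$-contribution. This is the one step that requires care; it runs exactly parallel to the corresponding argument in the proofs of the two preceding lemmas, so once that enumeration is done the statement follows immediately.
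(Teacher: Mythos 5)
Your overall scheme (Lemma~\ref{lem_markov} with $f_1=F_3$, $f_2=F_4$, disposal of the $\limsup$ via $F_4\le d\,F_1$ and~\eqref{eq_sharp_threshold}, and a two-sided generator estimate) is the right one, and your positive part is correct: each $F_3$-contribution $((u,v),i,j)$ is converted, at rate $\lambda$, by the birth from $u\in\xi^i$ onto $v$ into the $F_4$-contribution $((u,v),i)$, via the table entry ``$u\in\xi^i$, $v\in\cup_{k\neq i}\xi^k$, $v\notin\eta^i$''. The gap is in your negative part. Your assertion that ``every marked particle of every herd lies in the component of $\mathbb{T}\setminus\{v\}$ containing $u$'' is false precisely in the generic case: the definition of $\partial_\star$ only constrains $v'\in T\setminus\{u,v\}$, so $v$ itself may carry a marked particle in some herd $j\neq i$ while being a normal particle of herd $i$ --- and this is exactly the configuration your positive-part mechanism produces (the marked particle of herd $j$ is still sitting at $v$ after the birth from $u$). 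In that configuration there is a fifth destruction mechanism you have not enumerated: the marked particle at $v$ in herd $j$ gives birth onto a neighbour $w\neq u$ of $v$; since $(u,v)\in\partial_\star\dbh$ forces $w$ to be unmarked in every herd, the table makes $w$ enter $\xi^j$, a marked particle appears in a component of $\mathbb{T}\setminus\{v\}$ not containing $u$, and $(u,v)$ is ejected from $\partial_\star\dbh$, killing the $F_4$-contribution. This is the same mechanism as item (iv) in the proof of the $F_3$ lemma, and it is responsible for the $\lambda d$ (rather than $\lambda$) in that lemma's denominator.

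Accounting for it, your enumeration yields at best $LF_4(\dbh)\ge \lambda F_3(\dbh)-(2+(d-1)\lambda+\mathsf{v})F_4(\dbh)$ (the ``birth from $u$ onto $v$'' and ``births from the marked particle at $v$ in herd $j$'' cases are mutually exclusive, but $(d-1)\lambda\ge\lambda$ for $d\ge3$), so the constant you obtain is $\frac{\lambda}{2+(d-1)\lambda+\mathsf{v}}$, not $\frac{\lambda}{2+\lambda+\mathsf{v}}$. I do not see how to recover the denominator $2+\lambda+\mathsf{v}$ claimed in the statement by this route; by analogy with the preceding lemma the intended constant is presumably $2+\lambda d+\mathsf{v}$ or similar. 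This does not affect anything downstream --- the proof of Theorem~\ref{thm_strict} only needs some positive constant in place of $\frac{\lambda}{2+\lambda+\mathsf{v}}$ --- but as written your negative-part bound does not hold, and you should either add the missing mechanism and weaken the constant accordingly, or supply an argument excluding it.
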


Finally, we let~$F_5(\dbh)$ denote the number of frozen herds in~$\dbh$. 
\begin{lemma} We have
	\[\mathbb{E}[F_5(\dbh_t)] \ge \mathsf{v}\cdot \int_0^t F_4(\dbh_s)\;\mathrm{d}s.\]
\end{lemma}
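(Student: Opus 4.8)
The plan is to derive the pointwise generator inequality $LF_5(\dbh)\ge\mathsf{v}\cdot F_4(\dbh)$ for every state $\dbh=(\mathcal{J},\{(\eta^i,\xi^i):i\in\mathcal{J}\})$ of $(\dbh_t)_{t\ge0}$, and then integrate it by the localized optional-stopping argument already used in the proof of Lemma~\ref{lem_markov}. Two preliminary remarks will be used: first, $F_5$ is non-decreasing along trajectories, since a frozen herd stays frozen and is never removed; second, $F_5(\dbh_0)=0$ under our standing assumption that $(\dbh_t)$ starts from a single herd with a single marked particle (such a herd contains a marked particle, hence is not frozen).

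The core of the argument is to show that each term counted by $F_4$ produces a split that freezes a herd. Fix a pair $(u,v)\in\partial_\star\dbh$ and an index $i\in\mathcal{J}$ with $u\in\xi^i$ and $v\in\eta^i\setminus\xi^i$ — i.e.\ a triple contributing $1$ to $F_4(\dbh)$. Since $u,v\in\eta^i$, the edge $\{u,v\}$ delimits two non-empty portions of herd $i$, so it is active and splits at rate $\mathsf{v}$; writing $C_v$ for the connected component of $\mathbb{T}$ minus the edge $\{u,v\}$ that contains $v$, this split replaces herd $i$ by $\eta^i\cap C_u$ and $\eta^i\cap C_v$. I claim $\xi^i\cap C_v=\varnothing$: if $w\in\xi^i\cap C_v$, then $w\in\bigcup_{j\in\mathcal{J}}\xi^j$, and since $u\notin C_v$ we have $w\ne u$; but the unique path in $\mathbb{T}$ from $w$ to $v$ stays inside the subtree $C_v$ and therefore avoids $u$, so the defining property of $\partial_\star\dbh=\partial_\star(\bigcup_j\xi^j)$ forces $w=v$, contradicting $v\notin\xi^i$. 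Hence $\eta^i\cap C_v$ contains $v$ but no marked particle, so it becomes frozen, and this split event increments $F_5$ by at least one.

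Next I would rule out double counting: distinct triples counted by $F_4(\dbh)$ give distinct split events ``edge $\{u,v\}$ splits in herd $i$''. Two such triples sharing the same herd $i$ and the same unordered edge $\{u,v\}=\{u',v'\}$ but differing as ordered pairs would satisfy $u=v'$, $v=u'$, forcing $v\in\xi^i$ and $v\in\eta^i\setminus\xi^i$ simultaneously, which is impossible; triples with different herds or different edges plainly give different events. Therefore, at total rate at least $\mathsf{v}\cdot F_4(\dbh)$ the chain performs a split that creates a new frozen herd, and since \emph{no} transition of $(\dbh_t)$ ever decreases $F_5$, this yields $LF_5(\dbh)\ge\mathsf{v}\cdot F_4(\dbh)$.

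Finally, passing to the integrated bound: running the compensated-martingale argument of Lemma~\ref{lem_markov} (stopping at the times $\tau_a$ where a relevant function exceeds $a$, and using non-explosiveness of $(\dbh_t)$, which follows exactly as for the herds process in Section~\ref{sec_basic}, freezing only slowing the dynamics) gives $\mathbb{E}[F_5(\dbh_{t\wedge\tau_a})]\ge F_5(\dbh_0)+\mathsf{v}\,\mathbb{E}\big[\int_0^{t\wedge\tau_a}F_4(\dbh_s)\,\mathrm{d}s\big]$; letting $a\to\infty$ and using monotone convergence on both sides (all integrands are non-negative and $F_5$ is monotone, so $F_5(\dbh_{t\wedge\tau_a})\uparrow F_5(\dbh_t)$) yields $\mathbb{E}[F_5(\dbh_t)]\ge\mathsf{v}\int_0^t\mathbb{E}[F_4(\dbh_s)]\,\mathrm{d}s$, which is the claim. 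I expect the only delicate step to be the combinatorial bookkeeping of the second and third paragraphs — certifying that every term of $F_4$ genuinely forces a freezing (via the $\partial_\star$ geometry and the exclusion rule) and that no freezing event is over-counted; the rest is routine.
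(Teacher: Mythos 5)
Your proposal is correct and follows essentially the same route as the paper: establish the pointwise generator inequality $LF_5(\dbh)\ge \mathsf{v}\cdot F_4(\dbh)$ by observing that each pair counted by $F_4$ is an active edge in an unfrozen herd whose splitting, by the definition of $\partial_\star\dbh$, creates a herd with no marked particles (hence frozen), and then integrate using Dynkin's formula together with the monotonicity of $F_5$. Your additional care with the $\xi^i\cap C_v=\varnothing$ verification, the non-double-counting of split events, and the localization in the integration step only fills in details the paper leaves implicit.
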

\begin{proof}
	For  any state~$\dbh = (\mathcal{J},\{(\eta^i,\xi^i):i\in\mathcal{J}\})$, letting~$L$ denote the generator of the dynamics of~$(\dbh_t)_{t \ge 0}$ we have
	\[LF_5(\dbh) \ge \mathsf{v} \cdot F_4(\dbh).\]
	Indeed, let~$(u,v) \in \partial_\star \dbh$ be a pair contributing to the sum that defines~$F_4$. Then, there exists a herd~$i \in \mathcal{J}$ such that~$u \in \xi^i$ and~$v \in \eta^i\backslash \xi^i$, so a split in the edge between~$u$ and~$v$ in this herd produces a frozen herd, by the definition of~$\partial_\star \dbh$. The result now follows from noting that~$\mathbb{E}[F_5(\dbh_t)] = \int_0^t LF_5(\dbh_s)\;\mathrm{d}s$.
\end{proof}

\begin{proof}[Proof of Theorem~\ref{thm_strict}]
	Fix~$\mathsf{v} > 0$. We will prove that there exists~$\lambda < \lambda_c(\mathbb{T})$ such that the process~$(\dbh_t)_{t \ge 0}$ with parameters~$\mathsf{v}$ and~$\lambda$ produces a number of frozen herds that has expectation larger than one. By Lemma~\ref{lem_expectation_more}, this implies that~$\lambda > \bar{\lambda}(\mathsf{v})$.

	In what follows, we let~$\mathbb{E}_\lambda$ denote the expectation associated to a probability~$\mathbb{P}_\lambda$ under which the process~$(\dbh_t)_{t \ge 0}$ with parameters~$\mathsf{v}$ and~$\lambda$ is defined. The process is started from a single herd with a single marked particle (and no unmarked particles).	We let~$\xi_t = \cup_{i \in \mathcal{J}_t} \xi^i_t$, and recall that~$(\xi_t)_{t \ge 0}$ is a contact process on~$\mathbb{T}$ with parameter~$\lambda$.

	By combining the last four lemmas, for any~$\lambda < \lambda_c(\mathbb{T})$ we have
	\[\mathbb{E}_\lambda\left[\lim_{t \to \infty} F_5(\dbh_t)\right] \ge \mathsf{v}\cdot \frac{\lambda}{2+\lambda+\mathsf{v}}\cdot  \frac{\mathsf{v}}{2+\lambda \cdot d}\cdot  \frac{\lambda \cdot \left(1- \frac{1}{d-1}\right)}{2+\mathsf{v}+\lambda \cdot (d-1)} \cdot \int_0^\infty \mathbb{E}_\lambda [|\xi_t|]\;\mathrm{d}t . \]
Using~\eqref{eq_infinity_crit} and elementary continuity considerations, we have
	\[\lim_{\lambda \nearrow \lambda_c(\mathbb{T})} \int_0^\infty \mathbb{E}_\lambda[|\xi_t|]\;\mathrm{d}t = \infty.\]
	In particular, by taking~$\lambda$ close enough to~$\lambda_c(\mathbb{T})$ we obtain~$\mathbb{E}[\lim_{t \to \infty} F_5(\dbh_t)] > 1$.
\end{proof}

\textbf{Acknowledgements.} The research in this paper was funded by the grant NWO Physical Sciences TOP-Grant - Module 2 2016 EW, project number 613.001.603. The authors are thankful to NWO for the support.

\end{document}